\date{today}
\numberwithin{equation}{section}
\newtheorem{Theorem}{Theorem}[section]
\newtheorem{Proposition}[Theorem]{Proposition}
\newtheorem{Corollary}[Theorem]{Corollary}
\newtheorem{Lemma}[Theorem]{Lemma}
\newtheorem{Remark}[Theorem]{Remark}
\newtheorem{Definition}{Definition}[section]
\newtheorem{Example}{Example}[section]
\begin{document}

\title{Continuation Methods for Computing  Z-/H-eigenpairs of Nonnegative Tensors}
\date{}
\author{ Yueh-Cheng Kuo\thanks{%
Department of Applied Mathematics, National University of Kaohsiung,
Kaohsiung 811, Taiwan; \texttt{yckuo@nuk.edu.tw}} \and Wen-Wei Lin\thanks{%
Department of Applied Mathematics, National Chiao Tung University, Hsinchu
300, Taiwan \texttt{wwlin@math.nctu.edu.tw}} \and Ching-Sung Liu\thanks{%
Department of Applied Mathematics, National University of Kaohsiung,
Kaohsiung 811, Taiwan; \texttt{chingsungliu@nuk.edu.tw}}}
\maketitle

\begin{abstract}
In this paper, a homotopy continuation method for the computation of nonnegative
Z-/H-eigenpairs of a nonnegative tensor is presented. We show that the homotopy
continuation method is guaranteed to compute a nonnegative eigenpair.
Additionally, using degree analysis, we show that the number of positive
Z-eigenpairs of an irreducible nonnegative tensor is odd. A novel homotopy
continuation method is proposed to compute an odd number of positive Z-eigenpairs, and some numerical results are presented.
\end{abstract}


\begin{keywords}
continuation method, nonnegative tensor, Z-eigenpair, H-eigenpair, tensor eigenvalue problem
\end{keywords}

\begin{AMS}
65F15, 65F50
\end{AMS}

\pagestyle{myheadings} \thispagestyle{plain}
\markboth{Yueh-Cheng Kuo, Wen-Wei Lin, And  Ching-Sung Liu}{Continuation Methods for Computing  Z-/H-eigenpairs}


\section{Introduction}

An $m$th-order tensor $\mathcal{A}\in \mathbb{F}^{n_{1}\times n_{2}\times
\cdots \times n_{m}}$ is a multidimensional or $m$-way array, where $\mathbb{%
F}$ is a field. A first-order tensor is a vector, a second-order tensor is a
matrix, and tensors of order three or higher are called higher-order
tensors. When $n:=n_{1}= n_{2}=\cdots =  n_{m}$, $\mathcal{A}$ is called an $%
m$th-order, $n$-dimensional tensor. We denote the set of all $m$th-order, $n$%
-dimensional tensors on the field $\mathbb{F}$ by $\mathbb{F}^{[m,n]}$. For
a tensor $\mathcal{A}\in \mathbb{F}^{[m,n]}$, the tensor eigenvalues and
eigenvectors have been considered in many literatures \cite%
{Chang-Qi-Zhang:2013,Lim:2005,Qi:2005,Qi:2007,Qi-Sun-Wang:2007}, there are two particularly
interesting definitions called Z-eigenvalues and H-eigenvalues (see the definition later on).
Tensor eigenproblems have found applications in automatic control \cite%
{Anderson-Bose-Jury:1975,Bose-Kamt:1974,Bose-Newcomb:1974}, magnetic
resonance imaging \cite{Schultz-Seidel:2008,Qi-Wang:2008}, spectral
hypergraph theory \cite{Cooper-Dutle:2012,Hu-Qi:2012}, higher order Markov
chains \cite{Chang-Zhang:2013,Gleich-Lim-Yu:2015,Li-Ng:2014}, etc.

Unlike the matrix eigenvalue problem, computing eigenvalues of a general
higher-order tensor is NP-hard \cite{Hillar-Lim:2013}. Recently, Chen, Han
and Zhou \cite{Chen-Han-Zhou:2016} proposed a homotopy continuation method
for finding all eigenpairs of a general tensor. For the tensors of a certain
type, such as symmetric or nonnegative tensors, there are several algorithms
for computing one or some eigenpairs (including Z-eigenpair and H-eigenpair) .

For the computation of Z-eigenpairs, Kolda and Mayo \cite{Kolda-Mayo:2011}
proposed a shifted symmetric higher-order power method (SS-HOPM) for real symmetric tensors.
Gleich, Lim, and Yu \cite{Gleich-Lim-Yu:2015} proposed,  a always-stochastic Newton iteration for finding nonnegative Z-eigenpair of nonnegative tensors arising in a multilinear PageRank problem.

For the computation of  H-eigenpairs, Ng, Qi, and Zhou \cite{Ng-Qi-Zhou:2009} proposed a power-type method, NQZ algorithm, for the largest H-eigenvalue of weakly primitive nonnegative tensors. Some modeled versions of the power-type
method have been proposed in \cite%
{Liu-Zhou-Ibrahim:2010,Zhang-Qi-Xu:2012,Zhou-Qi-Wu:2013}. Recently, Liu, Guo
and Lin \cite{Liu-Guo-Lin:2016,Liu-Guo-Lin:2017} proposed a Newton-Noda
iteration (NNI) for finding the largest H-eigenvalue of  weakly
irreducible nonnegative tensors.

For a high-order nonnegative tensor $\mathcal{A}\in \mathbb{R}^{[m,n]}$,  $\mathcal{A}$ has nonnegative Z-eigenpairs  and H-eigenpairs,  but they are not unique (see \cite{Chang-Pearson-Zhang:2013,Chang-Qi-Zhang:2013,Chang-Pearson-Zhang:2008,Chang-Qi-Zhang:2013}). In many applications \cite{Chang-Zhang:2013,Gleich-Lim-Yu:2015,Cooper-Dutle:2012,Hu-Qi:2012,Li-Ng:2014}, computing the nonnegative Z-/H-eigenpairs is an important subject. Therefore, a central concern is how to avoid computing all the eigenvalues to find a few nonnegative Z-eigenpairs and H-eigenpairs. SS-HOPM \cite{Kolda-Mayo:2011,Gleich-Lim-Yu:2015} and NQZ \cite{Ng-Qi-Zhou:2009} can be used to compute a nonnegative Z-eigenpair and H-eigenpair, respectively, but the convergence may be quite slow. The always-stochastic Newton's method \cite{Gleich-Lim-Yu:2015} is a fast-converging algorithm when the starting point is sufficiently close to a solution. However, it's interestingly enough that the authors \cite{Gleich-Lim-Yu:2015} also provided a nonnegative tensor with a unique nonnegative Z-eigenpair, and the always-stochastic Newton's method fails to find it. Based on the reasons above mentioned, this motivates us to develop a continuation method to ensure the global convergence for nonnegative Z-eigenpairs.
The main contributions of this article are highlighted in the
following items.

{\leftmargini=7mm
\leftmarginii=6mm
\begin{itemize}
\item For \textit{nonnegative Z-eigenpairs}: we construct a linear homotopy $H_{\rm Z}(\mathbf{x},\lambda,t)=\mathbf{0}$, $t\in [0,1]$, where $H_{\rm Z}(\mathbf{x},\lambda,0)=\mathbf{0}$ has only one positive solution, $(\mathbf{x}_0,\lambda_0)$, and all real solutions of $H_{\rm Z}(\mathbf{x},\lambda,1)=\mathbf{0}$ are Z-eigenpairs of $\mathcal{A}$.
\begin{itemize}
\item[1.] We show that the solution curve of $H_{\rm Z}(\mathbf{x},\lambda,t)=
\mathbf{0}$ with initial $(\mathbf{x}_0,\lambda_0,0)$ is smooth.
Furthermore, we also show that the solution curve will reach a nonnegative
solution of $H_{\rm Z}(\mathbf{x},\lambda,1)=\mathbf{0}$ if all nonnegative
solutions of $H_{\rm Z}(\mathbf{x},\lambda,1)=\mathbf{0}$ are isolated (see
Theorems \ref{thm2.6} and \ref{thm2.8}). Hence, in this case, homotopy
continuation method is guaranteed to compute the nonnegative Z-eigenpair
of $\mathcal{A}$.

\item[2.] If $\mathcal{A}$ is irreducible and all nonnegative solutions of $
H_{\rm Z}(\mathbf{x},\lambda,1)=\mathbf{0}$ are isolated, then we show that the
number of positive Z-eigenpairs of $\mathcal{A}$, counting multiplicities,
is $2k+1$ for some integer $k\geqslant 0$ (see Corollary \ref{cor2.11}).

\item[3.] We propose a novel homotopy continuation method to compute an odd
number of positive Z-eigenpairs for an irreducible nonnegative tenor $
\mathcal{A}$ (see the flowchart in Figure \ref{fig1}).
\end{itemize}

\item For \textit{nonnegative H-eigenpairs}: we construct
a linear homotopy $H_{\rm H}(\mathbf{x},\lambda,t)=\mathbf{0}$, $t\in [0,1]$, where $H_{\rm H}(\mathbf{x},\lambda,0)=\mathbf{0}$ has only one positive solution, $(\mathbf{x}_0,\lambda_0)$, and all real solutions of $H_{\rm H}(\mathbf{x},\lambda,1)=\mathbf{0}$ are H-eigenpairs of $\mathcal{A}$. We show that
the solution curve of $H_{\rm H}(\mathbf{x},\lambda,t)=\mathbf{0}$ with initial $(
\mathbf{x}_0,\lambda_0,0)$ can be parameterized by $t\in [0,1)$. If the
nonnegative solutions of $H_{\rm H}(\mathbf{x},\lambda,1)=\mathbf{0}$ are
isolated, then the solution curve will reach a nonnegative solution of $H_{\rm H}(
\mathbf{x},\lambda,1)=\mathbf{0}$ (see Theorem \ref{thm2.3}), and hence,
homotopy continuation method is guaranteed to compute the nonnegative H-eigenpair of $\mathcal{A}$. Note that if $\mathcal{A}$ is weakly
irreducible, then $H_{\rm H}(\mathbf{x},\lambda,1)=\mathbf{0}$ has only one
positive isolated solution (see Theorem \ref{thm2.4}).
\end{itemize}}


This paper is organized as follows. The notations and preliminary results
are in Section 2. In Section 3, we develop homotopy continuation methods to
compute the nonnegative Z-eigenpairs and H-eigenpair of a nonnegative
tensor $\mathcal{A}$ and show that the continuation methods are guaranteed
to compute the nonnegative eigenpairs. In Section 4, we propose a novel
homotopy continuation method to compute an odd number of positive Z-eigenpairs for an irreducible nonnegative tenor. Some numerical results are
presented in Section 5. Conclusion of this paper is given in Section 6.
%

\section{Preliminaries}

Let $\mathbb{F}=\mathbb{C}$ or $\mathbb{R}$ be the complex field or the real
field. An $m$th-order rank-1 tensor $\mathcal{A}=[\mathcal{A}_{i_{1},i_{2},\cdots ,i_{m}}]\in \mathbb{F}^{n_{1}\times n_{2}\times \cdots
\times n_{m}}$ is defined as the outer product of $m$ nonzero vectors $
\mathbf{u}_{k}\in \mathbb{F}^{n_{k}}$ for $k=1,\cdots ,m$, denoted by $
\mathbf{u}_{1}\circ \mathbf{u}_{2}\circ \ldots \circ \mathbf{u}_{m}$. That
is,
\begin{equation*}
\mathcal{A}_{i_{1},i_{2},\cdots ,i_{m}}=u_{1,i_{1}}u_{2,i_{2}}\cdots
u_{m,i_{m}},
\end{equation*}
where $u_{k,i_{k}}$ is the $i_{k}$-th component of vector $\mathbf{u}_{k}$.
The \textit{$k$-mode product} of a tensor $\mathcal{A}\in \mathbb{F}^{n_{1}\times n_{2}\times \cdots \times n_{m}}$ with a vector $\mathbf{x}=(x_1,\cdots, x_{n_k})^{\top}\in \mathbb{F}^{n_k}$ is denoted by $\mathcal{A}\times_{k}\mathbf{x}$ and is $(m-1)$th-order tensor with size $n_{1}\times
\cdots\times n_{k-1}\times n_{k+1}\times \cdots \times n_{m}$. Elementwise,
we have
\begin{align*}
(\mathcal{A}\times_{k}\mathbf{x})_{i_1,\ldots,i_{k-1},i_{k+1},\ldots,i_m}=
\sum_{i_k=1}^{n_k}\mathcal{A}_{i_{1},i_{2},\cdots ,i_{m}}x_{i_k}.
\end{align*}
For a tensor $\mathcal{A}\in \mathbb{F}^{[m,n]}$ and a vector $\mathbf{x}
=(x_1,\cdots, x_{n})^{\top}\in \mathbb{F}^{n}$, we denote $\mathcal{A}\mathbf{x}^{m-1}=\mathcal{A}\times_{2}\mathbf{x}\times_{3}\cdots\times_{m}\mathbf{x}$ and $\mathbf{x}^{[\ell]}=(x_1^{\ell},\cdots, x_{n}^{\ell})$,
where $\ell$ is a positive real number.

Let $\mathbb{R}^{[m,n]}_{\geqslant 0}$ ($\mathbb{R}^{[m,n]}_{> 0}$
) denote the set of all real nonnegative (positive) $m$th-order $n$-dimensional tensors. We use calligraphic letters to denote tensors, capital
letters to denote matrices and lowercase (bold) letters to denote scalars
(vectors). For a tensor $\mathcal{A}$, $\mathcal{A}\geqslant 0$ ($\mathcal{A}> 0$) denotes a nonnegative (positive) tensor with nonnegative (positive)
entries. A real square nonsingular $M$-matrix $B$ can be written as $sI-A$
with $A\geq 0$ if $s>\rho (A)$, and a singular $M$-matrix if $s=\rho (A)$,
where $\rho (\cdot )$ is the spectral radius.
We use the $2$-norm for vectors and matrices, and all vectors are $n$%
-vectors and all matrices are $n\times n$, unless specified otherwise.

\subsection{Tensor eigenvalues and eigenvectors}
The following definition of Z-eigenvalues and H-eigenvalues was introduced by Qi
in \cite{Qi:2005,Qi:2007}.
\begin{Definition}
\label{def2.2} Suppose that $\mathcal{A}$ is an $m$th-order $n$-dimensional tensor.
\begin{itemize}
\item[(i)] $\lambda \in \mathbb{R}$ is called a Z-eigenvalue of $\mathcal{A}$ with the corresponding Z-eigenvector $\mathbf{x} \in \mathbb{R}^n\backslash\{ \mathbf{0}\}$ (or $(\lambda, \mathbf{x})$ is a Z-eigenpair)
if $(\lambda, \mathbf{x})$ satisfies
\begin{align}  \label{Zeig}
\mathcal{A}\mathbf{x}^{m-1}=\lambda \mathbf{x}, \text{ with } \|\mathbf{x}\|=1.
\end{align}

\item[(ii)] $\lambda \in \mathbb{R}$ is called a H-eigenvalue of $\mathcal{A}$ with the corresponding H-eigenvector $\mathbf{x} \in \mathbb{R}^n\backslash\{ \mathbf{0}\}$ (or $(\lambda, \mathbf{x})$ is a H-eigenpair)
if $(\lambda, \mathbf{x})$ satisfies
\begin{align}  \label{Heig}
\mathcal{A}\mathbf{x}^{m-1}=\lambda \mathbf{x}^{[m-1]}.
\end{align}
\end{itemize}
\end{Definition}

In \cite{Chen-Han-Zhou:2016,Ni-Qi-Wang-Wang:2007}, the authors proved that if the tensor $\mathcal{A}$ is generic, then the
number of isolated solutions of \eqref{Zeig} and of \eqref{Heig} with $\|\mathbf{x}\|=1$ are exactly $\frac{(m-1)^{n}-1}{m-2}$ and $n(m-1)^{n-1}$,
respectively.

The Perron-Frobenius theorems for (weakly) irreducible nonnegative tensor
have been widely investigated. The definition of (weakly) irreducible
tensor was introduced in \cite{HHQ}.

\begin{Definition}
\label{def2.4} Suppose that $\mathcal{A}$ is an $m$th-order $n$-dimensional tensor.
\begin{itemize}
\item[(i)] $\mathcal{A}$ is called reducible if there exists a nonempty proper subset $S\subset \{1,2,\cdots,n\}$ such that
\begin{align*}
\mathcal{A}_{i_1,i_2,\cdots,i_m}=0,\ \forall i_1\in S,\ \forall i_2,\ldots, i_m \notin S.
\end{align*}
If $\mathcal{A}$ is not reducible, then  $\mathcal{A}$ is called irreducible.

\item[(ii)] $\mathcal{A}$ is called weakly irreducible if for every nonempty
subset $S\subset \{1,2,\cdots,n\}$ there exist $i_1\in S$ and $i_2,\cdots, i_m$
with at least one $i_q \notin S$, $q=2,\cdots,m$ such that $\mathcal{A}_{i_1,i_2,\cdots,i_m}\neq 0$.
\end{itemize}
\end{Definition}

Note that when $m=2$, the definitions of an irreducible tensor and a weakly irreducible tensor are the same as the definition of an irreducible matrix. From the definitions, it is easily seen that if $\mathcal{A}$ is irreducible then $\mathcal{A}$ is weakly irreducible.

The existence of nonnegative Z-eigenpair (see \cite{Chang-Pearson-Zhang:2013,Chang-Qi-Zhang:2013}) or H-eigenpair (see \cite{Chang-Pearson-Zhang:2008,Chang-Qi-Zhang:2013}) of a nonnegative tensor $\mathcal{A}$ have been investigated. They satisfy the following properties.

{\leftmargini=7mm
\leftmarginii=9mm
\begin{itemize}
\item \textbf{Z-eigenpair}: Let $\mathcal{A}\in \mathbb{R}_{\geqslant 0}^{[n,m]}$
and $\mathcal{Z}(\mathcal{A})$ be the set of all Z-eigenvalues of $\mathcal{A}$. Then $\mathcal{A}$ has Z-eigenpair $(\lambda_0,\mathbf{x}_0)\in \mathbb{R}_{\geqslant 0}\times \mathbb{R}_{\geqslant 0}^{n}$, i.e., $\mathcal{Z}(\mathcal{A})\neq \emptyset$. In fact, the set $\mathcal{Z}(\mathcal{A})$ is not necessarily a finite set in general (see Example 3.6 in
\cite{Chang-Pearson-Zhang:2013}). The set $\mathcal{Z}(\mathcal{A})$ and the
Z-eigenpair $(\lambda_0,\mathbf{x}_0)$ of $\mathcal{A}$ satisfy the
following statements:

\begin{itemize}
\item[1.] The set $\mathcal{Z}(\mathcal{A})$ is bounded. It follows from
Proposition 3.3 of \cite{Chang-Pearson-Zhang:2013} that
\begin{align}  \label{equ_bound}
\varrho(\mathcal{A})\equiv \mathrm{sup}\{|\lambda|\ |\ \lambda\in\mathcal{Z}(\mathcal{A})\}\leqslant \max_{1\leqslant i\leqslant n}\sqrt{n}\sum_{i_2,\cdots,i_m=1}^{n}\mathcal{A}_{i,i_2,\cdots,i_m}.
\end{align}

\item[2.] If $\mathcal{A}$ is irreducible, then $\lambda_0> 0$ and $\mathbf{x}_0> 0$.

\item[3.] If $\mathcal{A}$ is weakly symmetric,\footnote{A tensor $\mathcal{A}\in \mathbb{R}^{[m,n]}$ is called weakly symmetric if
it satisfies $\frac{d}{d\mathbf{x}}\mathcal{A}\mathbf{x}^{m}=m\mathcal{A}\mathbf{x}^{m-1}$.} then the cardinality of $\mathcal{Z}(\mathcal{A})$ is
finite,
\begin{align*}
\varrho(\mathcal{A})\in \mathcal{Z}(\mathcal{A}) \text{ and }\varrho(\mathcal{A})=\max_{\mathbf{x}\in \mathbb{R}^{n}, \ \|\mathbf{x}\|=1}\mathcal{A}\mathbf{x}^m.
\end{align*}
\end{itemize}
\item \textbf{H-eigenpair}: If $\mathcal{A}\in \mathbb{R}_{\geqslant 0}^{[n,m]}$
then $\mathcal{A}$ has H-eigenpairs $(\lambda_0,\mathbf{x}_0)\in \mathbb{R}_{\geqslant 0}\times \mathbb{R}_{\geqslant 0}^{n}$. Suppose that $\mathcal{A}
$ is weakly irreducible then the eigenpair $(\lambda_0,\mathbf{x}_0)$
satisfies the following statements:

\begin{itemize}
\item[1.] $\lambda_0>0$ and $\mathbf{x}_0>0$.

\item[2.] If $\lambda$ is an eigenvalue with nonnegative eigenvector, then $\lambda=\lambda_0$. Moreover, the nonnegative eigenvector is unique up to a
multiplicative constant.

\item[3.] If $\lambda$ is an eigenvalue then $|\lambda|\leqslant \lambda_0.$
\end{itemize}
\end{itemize}}


The following lemma is straightforward.

\begin{Lemma}
\label{lem2.1} Let $\mathcal{A}\in \mathbb{R}^{[m,n]}_{>0}$. Then $\mathcal{A}$ has no Z-eigenpair (or H-eigenpair) on $\partial(\mathbb{R}^{n+1}_{\geqslant 0})$, where $\partial(\mathbb{R}^{n+1}_{\geqslant 0})$ is
the boundary of $\mathbb{R}^{n+1}_{\geqslant 0}$.
\end{Lemma}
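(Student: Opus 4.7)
The plan is to rule out, one by one, each way a putative eigenpair $(\lambda,\mathbf{x})$ could land on $\partial(\mathbb{R}^{n+1}_{\geqslant 0})$ by exploiting the strict positivity of $\mathcal{A}$. Being on the boundary of the nonnegative orthant means that at least one coordinate vanishes, so either $\lambda = 0$ or $x_i = 0$ for some index $i \in \{1,\ldots,n\}$. In both parts of Definition~\ref{def2.2}, the eigenvector is required to be nonzero (explicitly for the H-case and through $\|\mathbf{x}\|=1$ for the Z-case), so $\mathbf{x}\geqslant \mathbf{0}$, $\mathbf{x}\neq\mathbf{0}$, and hence some coordinate $x_j$ is strictly positive.

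The key observation that drives everything is that for any $\mathcal{A}\in\mathbb{R}^{[m,n]}_{>0}$ and any $\mathbf{x}\geqslant\mathbf{0}$ with $\mathbf{x}\neq\mathbf{0}$, the vector $\mathcal{A}\mathbf{x}^{m-1}$ has strictly positive entries componentwise. I would verify this by expanding, for each $i$,
\begin{equation*}
(\mathcal{A}\mathbf{x}^{m-1})_i \;=\; \sum_{i_2,\ldots,i_m=1}^{n}\mathcal{A}_{i,i_2,\ldots,i_m}\,x_{i_2}\cdots x_{i_m} \;\geqslant\; \mathcal{A}_{i,j,\ldots,j}\,x_j^{m-1} \;>\; 0,
\end{equation*}
where every summand is nonnegative and the single term with $i_2=\cdots=i_m=j$ is already positive because $\mathcal{A}>0$ and $x_j>0$.

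Given this, the two boundary cases collapse immediately. If $\lambda = 0$, the eigenrelation (either \eqref{Zeig} or \eqref{Heig}) reduces to $\mathcal{A}\mathbf{x}^{m-1}=\mathbf{0}$, contradicting the strict positivity just established. If instead some $x_i = 0$, then the $i$-th component of the right-hand side is $\lambda x_i = 0$ in the Z-case, and $\lambda x_i^{m-1}=0$ in the H-case, whereas the $i$-th component of the left-hand side $\mathcal{A}\mathbf{x}^{m-1}$ is strictly positive, again a contradiction. There is no real obstacle here: the lemma is essentially the statement that a strictly positive tensor sends the punctured nonnegative cone into the open positive cone, and an eigenpair on $\partial(\mathbb{R}^{n+1}_{\geqslant 0})$ would force the image to have a zero coordinate.
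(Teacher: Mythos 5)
Your proof is correct and follows essentially the same route as the paper: both arguments rest on the observation that $\mathcal{A}\mathbf{x}^{m-1}$ is strictly positive whenever $\mathcal{A}>0$ and $\mathbf{x}\geqslant\mathbf{0}$ is nonzero, which rules out any vanishing coordinate $x_i$ and then forces $\lambda>0$. The only difference is organizational (you split into the cases $\lambda=0$ and $x_i=0$, while the paper first shows $\mathbf{x}>0$ and then deduces $\lambda>0$), which is immaterial.
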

\begin{proof}
Assume that $(\lambda,\mathbf{x})\in \partial(\mathbb{R}^{n+1}_{\geqslant 0})$ is a Z-eigenpair (or H-eigenpair) of $\mathcal{A}\in \mathbb{R}^{[m,n]}_{>0}$.
Suppose that $\mathbf{x}\in \partial(\mathbb{R}^{n}_{\geqslant 0})$, then there exists $i\in \{1,2,\ldots,n\}$ such that $x_{i}=0$, where $x_{i}$ is $i$th component of $\mathbf{x}$. Then the $i$th component of the vector $\mathcal{A}\mathbf{x}^{m-1}$ is zero, i.e., $\sum_{i_2,\cdots,i_m=1}^{n}\mathcal{A}_{i,i_2,\cdots,i_m}x_{i_2}\cdots x_{i_m}=0$. This is a contradiction because $\mathcal{A}_{i,i_2,\cdots,i_m}>0$ and  $\mathbf{x}\in \mathbb{R}_{\geqslant 0}$ is a nonzero vector. Hence, $\mathbf{x}>0$. Since $\mathcal{A}>0$ and $\mathbf{x}>0$, we have $\lambda>0$. Hence, $(\lambda,\mathbf{x})\notin   \partial(\mathbb{R}^{n+1}_{\geqslant 0})$.
\end{proof}

Next, we describe all Z-eigenpairs and H-eigenpairs of a rank-1
nonnegative tensor.

\begin{Lemma}
\label{lem2.2} Let $\mathcal{A}_0=\mathbf{x}_1\circ\cdots \circ\mathbf{x}_m\in \mathbb{R}^{[m,n]}_{\geqslant 0}$, where $\mathbf{x}_1,\ldots, \mathbf{x}_m\in \mathbb{R}_{\geqslant 0}^n$
are nonzero vectors. Then
\begin{itemize}
\item[(i)] Z-eigenpairs: Let $\mathbf{x}_0=\frac{\mathbf{x}_1}{\|\mathbf{x}_1\|}$ and $\lambda_0=\|\mathbf{x}_1\|\prod_{k=2}^m\left(\mathbf{x}_k^{\top}\mathbf{x}_0\right)$. Then $\left(\lambda_0,\mathbf{x}_0\right)$, $\left((-1)^m\lambda_0,-\mathbf{x}_0\right)$ and $(0,\mathbf{w})$ with $\mathbf{w}\in \bigcup_{k=2}^{m}\mathrm{span}\{\mathbf{x}_k\}^{\bot}$, $\|\mathbf{w}\|=1$ are Z-eigenpairs of $\mathcal{A}_0$. In addition, if $\mathbf{x}_1$ is a positive vector, then the eigenvalue $\lambda_0>0$ with
positive eigenvector $\mathbf{x}_0$. Furthermore, if $\mathbf{x}_1,\ldots,
\mathbf{x}_m\in \mathbb{R}_{> 0}^n$, then $\mathbf{x}_0\in \mathbb{R}_{>0}^n$ is
the unique nonnegative eigenvector of $\mathcal{A}_0$.

\item[(ii)] H-eigenpairs: Let $\lambda_0=\prod_{k=2}^m\left(\mathbf{x}_k^{\top}\mathbf{x}_1^{[1/(m-1)]}\right)$ and $\mathbf{x}_0=\frac{\mathbf{x}_1^{[1/(m-1)]}}{\|\mathbf{x}_1^{[1/(m-1)]}\|}$. Then $\left(\lambda_0,\mathbf{x}_0\right)$ and $(0,\mathbf{w})$ with $\mathbf{w}\in
\bigcup_{k=2}^{m}\mathrm{span}\{\mathbf{x}_k\}^{\bot}$, $\mathbf{w}\neq
\mathbf{0}$ are H-eigenpairs of $\mathcal{A}_0$. In addition, if $\mathbf{x}_1$ is a positive vector, then the eigenvalue $\lambda_0>0$ with positive
eigenvector $\mathbf{x}_0$. Furthermore, if $\mathbf{x}_1,\ldots, \mathbf{x}_m\in \mathbb{R}_{> 0}^n$, then $c\mathbf{x}_0\in \mathbb{R}^{n}_{> 0}$ with
$c>0$ is the unique nonnegative eigenvector of $\mathcal{A}_0$.
\end{itemize}
\end{Lemma}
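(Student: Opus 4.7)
The plan is to reduce the rank-1 tensor eigenvalue equations to a scalar identity in the fixed direction $\mathbf{x}_1$, exploit that reduction to enumerate all solutions by a simple case split, and finally eliminate the trivial branch under the positivity hypothesis.

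The key observation I would establish at the outset is that for any $\mathbf{y}\in\mathbb{R}^n$,
\begin{equation*}
\mathcal{A}_0\mathbf{y}^{m-1}=\Bigl(\prod_{k=2}^{m}\mathbf{x}_k^{\top}\mathbf{y}\Bigr)\mathbf{x}_1,
\end{equation*}
which follows directly from the outer-product formula $(\mathcal{A}_0)_{i_1,\dots,i_m}=(\mathbf{x}_1)_{i_1}\cdots(\mathbf{x}_m)_{i_m}$ and the definition of $\mathcal{A}_0\mathbf{y}^{m-1}$. Once this identity is in hand, everything else is bookkeeping.

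For part (i), substituting the identity into $\mathcal{A}_0\mathbf{y}^{m-1}=\lambda\mathbf{y}$ with $\|\mathbf{y}\|=1$, I would split on whether $\alpha(\mathbf{y}):=\prod_{k=2}^{m}\mathbf{x}_k^{\top}\mathbf{y}$ vanishes. If $\alpha(\mathbf{y})\neq 0$, then $\mathbf{y}$ is forced to be parallel to $\mathbf{x}_1$, so $\mathbf{y}=\pm\mathbf{x}_0$; plugging in $\mathbf{y}=\mathbf{x}_0$ yields $\lambda=\lambda_0$, while $\mathbf{y}=-\mathbf{x}_0$ contributes a factor $(-1)^{m-1}$ in $\alpha(-\mathbf{x}_0)$ and an extra minus sign on the right-hand side, giving $\lambda=(-1)^m\lambda_0$. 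If $\alpha(\mathbf{y})=0$, then the left-hand side vanishes, forcing $\lambda=0$ and $\mathbf{y}\in\bigcup_{k=2}^{m}\operatorname{span}\{\mathbf{x}_k\}^{\bot}$. For part (ii) the argument is analogous: I substitute into $\mathcal{A}_0\mathbf{y}^{m-1}=\lambda\mathbf{y}^{[m-1]}$ and, when $\alpha(\mathbf{y})\neq 0$, match componentwise to get $\mathbf{y}^{[m-1]}$ proportional to $\mathbf{x}_1$, hence $\mathbf{y}=c\mathbf{x}_1^{[1/(m-1)]}$ for some $c$; substituting back, the scaling $c^{m-1}$ cancels and one reads off $\lambda=\lambda_0$ directly. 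The degenerate branch $\alpha(\mathbf{y})=0$ yields $\lambda\mathbf{y}^{[m-1]}=\mathbf{0}$, forcing $\lambda=0$ (since $\mathbf{y}\neq\mathbf{0}$) and $\mathbf{y}\in\bigcup_{k=2}^{m}\operatorname{span}\{\mathbf{x}_k\}^{\bot}$.

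The positivity statements are then immediate: when $\mathbf{x}_1>\mathbf{0}$, the factor $\|\mathbf{x}_1\|$ (respectively the entries of $\mathbf{x}_1^{[1/(m-1)]}$) is positive, and each $\mathbf{x}_k^{\top}\mathbf{x}_0>0$ when also $\mathbf{x}_k>\mathbf{0}$, so both $\lambda_0>0$ and the displayed eigenvector is strictly positive. For the uniqueness claim under $\mathbf{x}_1,\dots,\mathbf{x}_m\in\mathbb{R}_{>0}^n$, I would observe that any candidate in the degenerate branch satisfies $\mathbf{x}_k^{\top}\mathbf{w}=0$ for some $k$, which is impossible for a nonzero nonnegative $\mathbf{w}$ against a strictly positive $\mathbf{x}_k$; hence the only nonnegative eigenvector is the one described.

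I do not expect any real obstacle: the proof is essentially a reduction-to-rank-one calculation. The only mild subtlety is handling the sign in the $\mathbf{y}=-\mathbf{x}_0$ Z-case (tracking the $(-1)^{m-1}$ from $\alpha(-\mathbf{x}_0)$ against the extra minus sign on the right-hand side), and being careful in the H-case that $\mathbf{x}_1^{[1/(m-1)]}$ is well defined as a real vector precisely because $\mathbf{x}_1\geqslant\mathbf{0}$.
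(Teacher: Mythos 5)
Your proposal is correct and follows essentially the same route as the paper's own proof: reduce via the rank-1 identity $\mathcal{A}_0\mathbf{y}^{m-1}=\bigl(\prod_{k=2}^{m}\mathbf{x}_k^{\top}\mathbf{y}\bigr)\mathbf{x}_1$, split into the degenerate and nondegenerate cases, and kill the degenerate branch for nonnegative vectors using $\mathbf{x}_k>\mathbf{0}$. The one spot where you (like the paper) gloss slightly is the H-case inference $\mathbf{y}^{[m-1]}\propto\mathbf{x}_1\Rightarrow\mathbf{y}=c\,\mathbf{x}_1^{[1/(m-1)]}$, which ignores componentwise sign choices when $m-1$ is even, but this is harmless for the lemma's actual claims since the uniqueness assertion concerns only nonnegative eigenvectors, where the nonnegative root is forced.
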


\begin{proof}
$(i)$ Suppose that $(0,\mathbf{w})$  is a Z-eigenpair of tensor $\mathcal{A}_0=\mathbf{x}_1\circ\cdots \circ\mathbf{x}_m$. Then
\begin{align*}
\mathcal{A}_0\mathbf{w}^{m-1}=(\mathbf{x}_2^{\top}\mathbf{w})\cdots(\mathbf{x}_m^{\top}\mathbf{w})\mathbf{x}_1=\mathbf{0}.
\end{align*}
Since $\mathbf{x}_1\neq \mathbf{0}$, we obtain that $\prod_{k=2}^{m}(\mathbf{x}_k^{\top}\mathbf{w})=0$ and hence, there exists a $k\in \{2,\ldots,m\}$ such that  $\mathbf{x}_k^{\top}\mathbf{w}=0$. So, the Z-eigenvector $\mathbf{w}\in \bigcup_{k=2}^{m}{\rm span}\{\mathbf{x}_k\}^{\bot}$ and $\|\mathbf{w}\|=1$. Suppose that $(\lambda,\mathbf{w})$ with $\lambda\neq 0$ is a Z-eigenpair of tensor $\mathcal{A}_0$. Then
\begin{align*}
\lambda\mathbf{w}=\mathcal{A}_0\mathbf{w}^{m-1}=\left(\prod_{k=2}^m\left(\mathbf{x}_k^{\top}\mathbf{w}\right)\right)\mathbf{x}_1.
\end{align*}
Since the Z-eigenvector is a unit vector, we obtain that  $\mathbf{w}=\mathbf{x}_0$ or $\mathbf{w}=-\mathbf{x}_0$ is a Z-eigenvector corresponding to Z-eigenvalue $\lambda=\lambda_0$ or $\lambda=(-1)^m\lambda_0$, respectively, where   $\mathbf{x}_0=\frac{\mathbf{x}_1}{\|\mathbf{x}_1\|}$ and $\lambda_0=\|\mathbf{x}_1\|\prod_{k=2}^m\left(\mathbf{x}_k^{\top}\mathbf{x}_0\right)$. If $\mathbf{x}_1>0$, then it is easily seen that $\lambda_0>0$.  Furthermore, if $\mathbf{x}_1,\cdots,\mathbf{x}_m\in \mathbb{R}^n_{>0}$, then $ \left(\bigcup_{k=2}^{m}{\rm span}\{\mathbf{x}_k\}^{\bot}\right)\bigcap \mathbb{R}_{\geqslant 0}^n=\{\mathbf{0}\}$ and hence $\mathbf{x}_0$ is the unique nonnegative eigenvector of $\mathcal{A}_0$.

$(ii)$ Similarly, suppose that  $(0,\mathbf{w})$  is a H-eigenpair of tensor $\mathcal{A}_0=\mathbf{x}_1\circ\cdots \circ\mathbf{x}_m$. Then
we obtain the H-eigenvector $\mathbf{w}\in \bigcup_{k=2}^{m}{\rm span}\{\mathbf{x}_k\}^{\bot}$  and $\mathbf{w}\neq \mathbf{0}$.
Suppose that $(\lambda,\mathbf{w})$ with $\lambda\neq 0$ is a H-eigenpair of tensor $\mathcal{A}_0$. Then
\begin{align*}
\lambda \mathbf{w}^{[m-1]}=\mathcal{A}_0\mathbf{w}^{m-1}=\left(\prod_{k=2}^m(\mathbf{x}_k^{\top}\mathbf{w})\right)\mathbf{x}_1.
\end{align*}
Since $\lambda\neq 0$, we have  $\mathbf{w}=c\mathbf{x}_0=c\frac{\mathbf{x}_1^{[1/(m-1)]}}{\|\mathbf{x}_1^{[1/(m-1)]}\|}$ and  $\lambda=\lambda_0\equiv \prod_{k=2}^m\left(\mathbf{x}_k^{\top}\mathbf{x}_1^{[1/(m-1)]}\right)$, where $c\neq 0$.  The eigenvalue $\lambda\geqslant 0$, because $\mathbf{x}_1,\ldots, \mathbf{x}_m\in \mathbb{R}_{\geqslant 0}^n$. Since $\lambda\neq 0$,  we obtain that  $\mathbf{x}_k^{\top}\mathbf{x}_1^{[1/(m-1)]}\neq 0$ for each $k\in \{2,\cdots,m\}$.
If $\mathbf{x}_1>0$, then $\lambda_0=\prod_{k=2}^m\left(\mathbf{x}_k^{\top}\mathbf{x}_1^{[1/(m-1)]}\right)>0$  because $\mathbf{x}_2,\ldots, \mathbf{x}_m\in \mathbb{R}_{\geqslant 0}^n$ are nonzero. Furthermore, if $\mathbf{x}_1,\cdots,\mathbf{x}_m\in \mathbb{R}^n_{>0}$, then $ \left(\bigcup_{k=2}^{m}{\rm span}\{\mathbf{x}_k\}^{\bot}\right)\bigcap \mathbb{R}_{\geqslant 0}^n=\{\mathbf{0}\}$. So, $c\mathbf{x}_0$ with $c>0$ is the unique nonnegative eigenvector of  $\mathcal{A}_0$. This completes the proof.
\end{proof}

\subsection{The basic theorems of continuation methods}

In the following, we will introduce some preliminary theorems which are
useful in study of continuation methods.

\begin{Definition}
\label{def2.5} Let $H:\mathbb{R}^{n}\rightarrow \mathbb{R}^{k}$ be a
continuously differentiable function (denoted by $H\in C^1(\mathbb{R}^n)$).
A point $\mathbf{p}\in \mathbb{R}^{k}$ is called regular value if $\mathrm{rank}(\mathscr{D}_{\mathbf{x}}H(\mathbf{x}_*))=\min\{n,k\}$ for all $\mathbf{x}_*\in H^{-1}(\mathbf{p})\subseteq \mathbb{R}^{n}$, where $\mathscr{D}_{\mathbf{x}}H(\mathbf{x})$ denotes the partial derivatives of $H(\mathbf{x}).$
\end{Definition}

Now, we state the Parameterized Sard's Theorem. The proof can be
found in \cite{Chow-Mallet-Paret-Yorke :1978}.

\begin{Theorem}[Parameterized Sard's Theorem \protect\cite{Chow-Mallet-Paret-Yorke :1978}]
\label{thm2.6.5} Let $U\subseteq \mathbb{R}^{n}$ and $V\subseteq \mathbb{R}^{q}$ be open sets, and $P:U\times V\rightarrow \mathbb{R}^{k}$ be a smooth
map. If $\mathbf{0}\in \mathbb{R}^{k}$ is a regular value of $P$, then for
almost all $\mathbf{c}\in V$, $\mathbf{0}$ is a regular value of $H(\cdot)\equiv P(\cdot,\mathbf{c})$.
\end{Theorem}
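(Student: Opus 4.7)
The plan is to deduce the statement from the classical Sard theorem applied not to $P$ itself but to a projection from the zero set of $P$ onto the parameter space $V$.

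First, I would use the hypothesis that $\mathbf{0}$ is a regular value of $P$ to conclude, via the implicit function theorem, that $M := P^{-1}(\mathbf{0}) \subseteq U \times V$ is a smooth submanifold of codimension $k$, hence of dimension $n+q-k$. Its tangent space at a point $(\mathbf{x}_*, \mathbf{c})\in M$ is the kernel of the block matrix $[\mathscr{D}_{\mathbf{x}}P(\mathbf{x}_*,\mathbf{c})\mid\mathscr{D}_{\mathbf{c}}P(\mathbf{x}_*,\mathbf{c})]$, which has rank $k$ by hypothesis.

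Next, I would introduce the smooth projection $\pi:M\to V$, $\pi(\mathbf{x},\mathbf{c})=\mathbf{c}$, and invoke the classical Sard theorem for $\pi$: the set of critical values of $\pi$ has Lebesgue measure zero in $V$, so it suffices to show that whenever $\mathbf{c}$ is a regular value of $\pi$, $\mathbf{0}$ is a regular value of $H(\cdot)=P(\cdot,\mathbf{c})$.

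The core step is a short linear-algebra identification through the tangent space of $M$. Pick $\mathbf{x}_*\in H^{-1}(\mathbf{0})$, so $(\mathbf{x}_*,\mathbf{c})\in M$. A tangent vector at this point has the form $(\mathbf{v},\mathbf{w})$ with $\mathscr{D}_{\mathbf{x}}P\,\mathbf{v}+\mathscr{D}_{\mathbf{c}}P\,\mathbf{w}=\mathbf{0}$, and $d\pi$ sends it to $\mathbf{w}$. In the case $n\geqslant k$, regularity of $\mathbf{c}$ for $\pi$ means $d\pi$ is surjective at every such point; equivalently, for every $\mathbf{w}\in\mathbb{R}^q$ there exists $\mathbf{v}\in\mathbb{R}^n$ with $\mathscr{D}_{\mathbf{x}}P\,\mathbf{v}=-\mathscr{D}_{\mathbf{c}}P\,\mathbf{w}$, i.e.\ $\mathrm{Im}(\mathscr{D}_{\mathbf{c}}P)\subseteq\mathrm{Im}(\mathscr{D}_{\mathbf{x}}P)$. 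Combined with the fact that $[\mathscr{D}_{\mathbf{x}}P\mid\mathscr{D}_{\mathbf{c}}P]$ has rank $k$, this forces $\mathscr{D}_{\mathbf{x}}P(\mathbf{x}_*,\mathbf{c})$ itself to have rank $k=\min\{n,k\}$, as required. In the opposite case $n<k$, one has $\dim M=n+q-k<q=\dim V$, so any regular value of $\pi$ lies outside its image; hence $H^{-1}(\mathbf{0})=\emptyset$ and the condition on $H$ is vacuous.

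The main obstacle I anticipate is precisely the case $n<k$, which requires care with the convention that points outside the image of a map are (vacuously) regular values, both for $\pi$ and for $H$. Once this bookkeeping is handled, the argument is a clean combination of the implicit function theorem, the classical Sard theorem, and the elementary tangent-space computation above.
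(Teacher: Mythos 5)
The paper does not prove this statement: it is quoted as a known result, with the proof deferred to the cited reference of Chow, Mallet-Paret and Yorke (1978). Your argument --- pass to the manifold $M=P^{-1}(\mathbf{0})$ via the implicit function theorem, apply the classical Sard theorem to the projection $\pi:M\to V$, and show by the tangent-space computation that regular values of $\pi$ are exactly the parameters $\mathbf{c}$ for which $\mathbf{0}$ is a regular value of $P(\cdot,\mathbf{c})$ (with the case $n<k$ handled vacuously) --- is correct and is essentially the standard proof of this parametric transversality theorem, i.e.\ the same route as the reference the paper relies on.
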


Suppose that $\mathbf{p}\in \mathbb{R}^n$ is a regular value of a
continuously differentiable function $F:\mathbb{R}^n\rightarrow \mathbb{R}^n$, $\Omega\subseteq \mathbb{R}^n$ is open bounded and $\mathbf{p}\notin
F(\partial\Omega)$. Then the set $F^{-1}(\mathbf{p})\cap\overline{\Omega}$
is a finite set (see Lemma 3.5 in \cite{ Keller:1987}). The ``degree" of $F:\overline{\Omega}\rightarrow \mathbb{R}^n$ at a point $\mathbf{p}\in \mathbb{R}^n$ plays an important role in investigating the solution of $F(\mathbf{x})=\mathbf{p}$. The definition of the degree for $F$ is as follows.

\begin{Definition}[See \protect\cite{ Keller:1987}]
\label{def2.6} Let $F:\overline{\Omega}\rightarrow \mathbb{R}^{n}$ be a
continuously differentiable function, where $\Omega\subseteq \mathbb{R}^n$
is an open bounded subset. Let $\mathbf{p}\in \mathbb{R}^n$ be a regular
value of $F$ and $\mathbf{p}\notin F(\partial\Omega)$. The the degree of $F$
on $\Omega$ for $\mathbf{p}$ is defined as:
\begin{align*}
\mathrm{deg }(F,\Omega,\mathbf{p})=\sum_{\mathbf{x}\in F^{-1}(\mathbf{p})\cap\overline{\Omega}}\mathrm{Sgn}[\mathrm{det} \mathscr{D}_{\mathbf{x}}F(\mathbf{x})].
\end{align*}
\end{Definition}

\begin{Remark}
\label{rem2.1} The conditions of the definition for the degree of $F$ on $\Omega$
for $\mathbf{p}$ can be relaxed. It only requires the function $F$ satisfies
$(i)$ $F\in C^1(\overline{\Omega})$ and $(ii)$ $\mathbf{p}\notin
F(\partial\Omega)$ (see Definition 3.19 in \cite{Keller:1987}). That is, the
condition, $\mathbf{p}$ is a regular value of $F$, can be omitted in the
definition. The main idea is that if $\mathbf{p}$ is not a regular value
then the degree can be defined as $\mathrm{deg}(F,\Omega,\mathbf{p})\equiv
\mathrm{deg}(F,\Omega,\mathbf{q})$, where $\mathbf{q}$ is a regular value of
$F$ and $\|\mathbf{q}-\mathbf{p}\|<\inf_{\mathbf{x}\in \partial\Omega}\|F(\mathbf{x})-\mathbf{p}\|$.
\end{Remark}

\begin{Theorem}[Homotopy Invariance of Degree, see \protect\cite{Keller:1987}]
\label{thm2.9} Let $H:\mathbb{R}^{n}\times [0,1]\rightarrow \mathbb{R}^{n}$,
$\Omega\subset \mathbb{R}^n$ bounded open set and $\mathbf{p}\in \mathbb{R}^n
$ satisfy:

\begin{itemize}
\item[(i)] $H\in C^2(\overline{\Omega}\times [0,1])$ and $H(\mathbf{u}
,t)\neq \mathbf{p}$ on $\partial\Omega\times[0,1]$,

\item[(ii)] $F^0(\mathbf{u})\equiv H(\mathbf{u},0)$, $F(\mathbf{u})\equiv H(
\mathbf{u},1)$,

\item[(iii)] $\mathbf{p}$ is a regular value for $H$ on $\overline{\Omega}
\times [0,1]$ and for $F^0$ and $F$ on $\overline{\Omega}$.
\end{itemize}
Then $\mathrm{deg}(H(\cdot,t),\Omega,\mathbf{p})$ is independent of $t\in
[0,1]$. In particular, $\mathrm{deg}(F^0,\Omega,\mathbf{p})=\mathrm{deg}(F,\Omega,\mathbf{p})$.
\end{Theorem}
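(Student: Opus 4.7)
The plan is to work on the $(n{+}1)$-dimensional cylinder $\overline{\Omega}\times[0,1]$ and study the preimage $M := H^{-1}(\mathbf{p}) \cap (\overline{\Omega} \times [0,1])$. Since $\mathbf{p}$ is a regular value of $H$, the full Jacobian $[\mathscr{D}_{\mathbf{u}} H \mid \partial_t H]$ has rank $n$ at every point of $M$, so by the implicit function theorem (in the $C^2$ category, as guaranteed by hypothesis (i)) $M$ is a compact $1$-manifold with boundary. Hypothesis (i) also prohibits $H(\mathbf{u},t)=\mathbf{p}$ on $\partial\Omega\times[0,1]$, so no component of $M$ can meet the lateral boundary of the cylinder; hence $\partial M \subset \overline{\Omega} \times \{0,1\}$. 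Regularity of $\mathbf{p}$ for $F^0$ and $F$ then gives that $\partial M$ is a finite set consisting of the points of $(F^0)^{-1}(\mathbf{p})\times\{0\}$ and $F^{-1}(\mathbf{p})\times\{1\}$.

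Next I would orient $M$ by the bordered-determinant convention: a tangent vector $\tau=(\dot{\mathbf{u}},\dot{t})$ to $M$ at $(\mathbf{u}_*,t_*)$ is called positive when
\begin{equation*}
\det\begin{pmatrix}\mathscr{D}_{\mathbf{u}} H(\mathbf{u}_*,t_*)&\partial_t H(\mathbf{u}_*,t_*)\\ \dot{\mathbf{u}}^{\top}&\dot{t}\end{pmatrix}>0.
\end{equation*}
A Schur-complement computation at an endpoint (where $\det \mathscr{D}_{\mathbf{u}} H \neq 0$) reduces this to $\mathrm{sgn}(\dot{t})=\mathrm{sgn}(\det \mathscr{D}_{\mathbf{u}} H)$. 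By the classification of compact $1$-manifolds with boundary, each component of $M$ is either a circle (contributing nothing to $\partial M$) or an arc with two endpoints. Traversing each arc in its positive direction, the tangent vector at the starting endpoint points into the cylinder and at the terminal endpoint points out, so the sign of $\dot{t}$ at each endpoint is determined by which slice ($t=0$ or $t=1$) it lies in.

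Combining the orientation rule with the in/out dichotomy, each boundary point of $M$ is labeled by a sign that coincides with $\mathrm{sgn}(\det \mathscr{D}_{\mathbf{u}} F^0)$ at $t=0$ and with $\mathrm{sgn}(\det \mathscr{D}_{\mathbf{u}} F)$ at $t=1$. An arc with both endpoints at $t=0$ then contributes $+1$ and $-1$ to $\deg(F^0,\Omega,\mathbf{p})$ and cancels; likewise for arcs lying entirely in the slice $t=1$; while an arc with one endpoint in each slice contributes equal signs to $\deg(F^0,\Omega,\mathbf{p})$ and $\deg(F,\Omega,\mathbf{p})$. Summing over components of $M$ yields $\deg(F^0,\Omega,\mathbf{p}) = \deg(F,\Omega,\mathbf{p})$. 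The full statement that $\deg(H(\cdot,t),\Omega,\mathbf{p})$ is independent of $t$ follows by running the same argument on every subcylinder $\overline{\Omega}\times[s_1,s_2]$ (Sard's theorem makes the set of times $s$ at which $\mathbf{p}$ is regular for $H(\cdot,s)$ dense) and noting that the integer-valued map $t \mapsto \deg(H(\cdot,t),\Omega,\mathbf{p})$ is locally constant by hypothesis (i).

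The main obstacle is the sign bookkeeping in the second and third paragraphs: verifying, for each of the three possible endpoint configurations of an arc (both at $t=0$, both at $t=1$, or one at each), that the Schur-complement identity pairs up the endpoint signs in exactly the right way to cancel loop-arcs and match bridge-arcs across the two degree sums. The calculation is elementary at each regular endpoint but must be done with care; this is precisely why condition (iii) demands regularity not only for the homotopy $H$ on the cylinder but also separately for the slice maps $F^0$ and $F$, which is what forces the arcs of $M$ to cross the top and bottom of the cylinder transversally.
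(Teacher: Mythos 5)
The paper itself offers no proof of Theorem \ref{thm2.9}: it is quoted from Keller \cite{Keller:1987} and used as a black box (with Remarks \ref{rem2.1} and \ref{rem2.2} relaxing the regularity requirement). Your argument is the standard differential-topology proof of the smooth case: $M=H^{-1}(\mathbf{p})\cap(\overline{\Omega}\times[0,1])$ is a compact one-manifold, condition (i) keeps it off the lateral boundary, regularity of $\mathbf{p}$ for $F^0$ and $F$ forces transversal crossings of the slices $t=0,1$, and the bordered-determinant orientation together with the Schur-complement identity $\det\bigl[\begin{smallmatrix}A&b\\ \dot{\mathbf{u}}^{\top}&\dot t\end{smallmatrix}\bigr]=\dot t\,(1+\|A^{-1}b\|^2)\det A$ gives exactly the endpoint-sign pairing you describe; your bookkeeping for the three arc configurations is correct, and it does yield $\deg(F^0,\Omega,\mathbf{p})=\deg(F,\Omega,\mathbf{p})$. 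So for the ``in particular'' conclusion your proof is sound and, if anything, more self-contained than the paper, which simply cites the result.

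The one soft spot is your last paragraph. For an intermediate $t$ the point $\mathbf{p}$ need not be a regular value of $H(\cdot,t)$ (hypothesis (iii) only gives regularity for $H$ on the cylinder and for the two end maps), so $\deg(H(\cdot,t),\Omega,\mathbf{p})$ must be taken in the relaxed sense of Remark \ref{rem2.1}, and your claim that $t\mapsto\deg(H(\cdot,t),\Omega,\mathbf{p})$ is ``locally constant by hypothesis (i)'' is precisely the stability of the degree under small perturbations of the map --- a genuine lemma of degree theory that you assert rather than prove (it follows from $\inf_{\partial\Omega\times[0,1]}\|H-\mathbf{p}\|>0$ plus the nearby-regular-value definition, but that argument needs to be written out). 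Also, your density-in-$s$ claim should be routed through the parameterized Sard theorem (Theorem \ref{thm2.6.5}) applied with $t$ as the parameter, using the regularity of $\mathbf{p}$ for $H$ on the cylinder, rather than plain Sard. With those two points filled in, the full statement follows; as it stands, the core equality of the end degrees is proved and the uniform-in-$t$ statement is only sketched.
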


\begin{Remark}
\label{rem2.2} Without the restriction (i) in Theorem \ref{thm2.9}, the
theorem is true for the weakly definition of degree (see Remark \ref{rem2.1}).
\end{Remark}

\subsection{Linear homotopies}

Given a tensor $\mathcal{A}\in \mathbb{R}^{[m,n]}_{\geqslant 0}$, let
\begin{align}  \label{eq2.1}
\mathcal{A}_0=\mathbf{x}_1\circ\cdots \circ\mathbf{x}_m\in \mathbb{R}^{[m,n]}_{> 0},
\end{align}
be a rank-1 tensor, where $\mathbf{x}_1, \ldots, \mathbf{x}_m\in \mathbb{R}^n_{>0}$ are generic. We define some systems of polynomial equations:
\begin{align}  \label{eqZ}
F^0_{\rm Z}(\mathbf{x},\lambda)=\left(
\begin{array}{c}
\mathcal{A}_0\mathbf{x}^{m-1}-\lambda \mathbf{x} \\
\mathbf{x}^{\top}\mathbf{x}-1
\end{array}
\right)=\mathbf{0},\ \ \ F_{\rm Z}(\mathbf{x},\lambda)=\left(
\begin{array}{c}
\mathcal{A}\mathbf{x}^{m-1}-\lambda \mathbf{x} \\
\mathbf{x}^{\top}\mathbf{x}-1
\end{array}
\right)=\mathbf{0},
\end{align}
and
\begin{align}  \label{eqH}
F^0_{\rm H}(\mathbf{x},\lambda)=\left(
\begin{array}{c}
\mathcal{A}_0\mathbf{x}^{m-1}-\lambda \mathbf{x}^{[m-1]} \\
\mathbf{x}^{\top}\mathbf{x}-1
\end{array}
\right)=\mathbf{0},\ F_{\rm H}(\mathbf{x},\lambda)=\left(
\begin{array}{c}
\mathcal{A}\mathbf{x}^{m-1}-\lambda \mathbf{x}^{[m-1]} \\
\mathbf{x}^{\top}\mathbf{x}-1
\end{array}
\right)=\mathbf{0}.
\end{align}
From Definition \ref{def2.2}, if the H-eigenvector of $\mathcal{A}$ is a unit vector, then the Z-eigenpair and H-eigenpair
should satisfy $F_{\rm Z}(\mathbf{x},\lambda)=\mathbf{0}$ and $F_{\rm H}(\mathbf{x}
,\lambda)=\mathbf{0}$, respectively. Let
\begin{align}  \label{eqAt}
\mathcal{A}(t)=(1-t)\mathcal{A}_0+t\mathcal{A}.
\end{align}
Now, we consider two linear homotopies
\begin{align}  \label{eq2.7}
H_{\rm Z}(\mathbf{x},\lambda, t)=\left(
\begin{array}{c}
\mathcal{A}(t)\mathbf{x}^{m-1}-\lambda \mathbf{x} \\
\mathbf{x}^{\top}\mathbf{x}-1
\end{array}
\right)=\mathbf{0},\ \ \ \text{ for }t\in[0,1]
\end{align}
and
\begin{align}  \label{eq2.2}
H_{\rm H}(\mathbf{x},\lambda, t)=\left(
\begin{array}{c}
\mathcal{A}(t)\mathbf{x}^{m-1}-\lambda \mathbf{x}^{[m-1]} \\
\mathbf{x}^{\top}\mathbf{x}-1
\end{array}
\right)=\mathbf{0},\ \ \ \text{ for }t\in[0,1].
\end{align}
It is easily seen that $H_{\rm Z}(\mathbf{x},\lambda, 0)=F^0_{\rm Z}(\mathbf{x},\lambda)$,
$H_{\rm Z}(\mathbf{x},\lambda, 1)=F_{\rm Z}(\mathbf{x},\lambda)$, $H_{\rm H}(\mathbf{x},\lambda, 0)=F^0_{\rm H}(\mathbf{x},\lambda)$ and $H_{\rm H}(\mathbf{x},\lambda, 1)=F_{\rm H}(
\mathbf{x},\lambda)$.

We then have the following results.
\begin{Theorem}
\label{thm2.1} For any $t\in [0,1)$, $H_{\rm Z}(\mathbf{x},\lambda, t)\neq \mathbf{0}$ and $H_{\rm H}(\mathbf{x
},\lambda, t)\neq \mathbf{0}$ on the boundary of $\mathbb{R}
^{n+1}_{\geqslant 0}$.
\end{Theorem}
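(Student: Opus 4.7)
The plan is to reduce the claim to Lemma \ref{lem2.1} by showing that the ``frozen'' tensor $\mathcal{A}(t)$ is strictly positive for every $t\in[0,1)$. Since $\mathcal{A}_0\in\mathbb{R}^{[m,n]}_{>0}$ by construction in \eqref{eq2.1}, and $\mathcal{A}\in\mathbb{R}^{[m,n]}_{\geqslant 0}$ by hypothesis, the scalar $1-t$ is strictly positive for $t\in[0,1)$, so each entry of $\mathcal{A}(t)=(1-t)\mathcal{A}_0+t\mathcal{A}$ is strictly positive. Therefore $\mathcal{A}(t)\in\mathbb{R}^{[m,n]}_{>0}$ on the entire half-open interval.

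Next, I would translate the homotopy condition into an eigenpair statement. By construction, $H_{\rm Z}(\mathbf{x},\lambda,t)=\mathbf{0}$ is precisely the system defining a unit-norm Z-eigenpair of $\mathcal{A}(t)$, and $H_{\rm H}(\mathbf{x},\lambda,t)=\mathbf{0}$ is the system defining a (unit-norm) H-eigenpair of $\mathcal{A}(t)$. So a zero of either homotopy lying on $\partial(\mathbb{R}^{n+1}_{\geqslant 0})$ would be a Z- or H-eigenpair of the strictly positive tensor $\mathcal{A}(t)$ lying on that same boundary. Lemma \ref{lem2.1} rules this out and finishes the argument.

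The only bookkeeping step is to verify that the notion of ``boundary point'' is compatible with the normalization constraint present in $H_{\rm Z}$ and $H_{\rm H}$. A point $(\mathbf{x},\lambda)\in\partial(\mathbb{R}^{n+1}_{\geqslant 0})$ has at least one vanishing coordinate. If $\mathbf{x}=\mathbf{0}$, the equation $\mathbf{x}^\top\mathbf{x}-1=0$ already fails, so we may assume $\mathbf{x}\neq\mathbf{0}$ and either some $x_i=0$ or $\lambda=0$. In either case the argument of Lemma \ref{lem2.1} applies verbatim: if some $x_i=0$, then the $i$th entry of $\mathcal{A}(t)\mathbf{x}^{m-1}$ is a strictly positive sum (because $\mathcal{A}(t)>0$ and $\mathbf{x}\geqslant\mathbf{0}$ is nonzero), contradicting $\lambda x_i=0$ or $\lambda x_i^{m-1}=0$; and if $\mathbf{x}>\mathbf{0}$ while $\lambda=0$, then $\mathcal{A}(t)\mathbf{x}^{m-1}>\mathbf{0}\neq\mathbf{0}$, again a contradiction.

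There is essentially no hard step here: the only thing to notice is that strict positivity of $\mathcal{A}_0$ propagates to $\mathcal{A}(t)$ throughout $[0,1)$ but is generally lost at $t=1$, which is exactly why the conclusion is claimed on the half-open interval and not at the endpoint $t=1$.
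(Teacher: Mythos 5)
Your proposal is correct and follows essentially the same route as the paper: observe that $\mathcal{A}(t)=(1-t)\mathcal{A}_0+t\mathcal{A}>0$ for $t\in[0,1)$ since $\mathcal{A}_0>0$ and $\mathcal{A}\geqslant 0$, then invoke Lemma \ref{lem2.1} (whose argument you also reproduce) to exclude boundary zeros. No gaps.
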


\begin{proof}
For any $t\in [0,1)$, $\mathcal{A}(t)=(1-t)\mathcal{A}_0+t\mathcal{A}>0$ because $\mathcal{A}_0>0$ and  $\mathcal{A}\geqslant 0$. It follows from Lemma \ref{lem2.1} that $H_{\rm Z}(\mathbf{x},\lambda, t)\neq \mathbf{0}$ and $H_{\rm H}(\mathbf{x},\lambda, t)\neq \mathbf{0}$  on $\partial(\mathbb{R}^{n+1}_{\geqslant 0})$.
\end{proof}

\section{Homotopy Continuation Methods}

In this section, we propose homotopy continuation methods for computing
nonnegative  Z-/H-eigenpairs of a real nonnegative tensor $%
\mathcal{A}\in \mathbb{R}^{[m,n]}_{\geqslant 0}$. The following lemma is
useful in our later analysis.

\begin{Lemma}
\label{lem3.3} Suppose that $A\in \mathbb{R}^{n\times n}$ is an irreducible
singular $M$-matrix and $\mathbf{x},\mathbf{y}\in \mathbb{R}_{>0}^n$. Then
the matrix $\left[
\begin{array}{c|c}
A & \mathbf{x} \\ \hline
\mathbf{y}^{\top} & 0
\end{array}
\right]\in \mathbb{R}^{(n+1)\times (n+1)}$ is invertible.
\end{Lemma}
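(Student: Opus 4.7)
The plan is to show the bordered matrix has trivial kernel by exploiting the Perron--Frobenius structure of irreducible singular $M$-matrices. Since $A$ is an irreducible singular $M$-matrix, we may write $A = sI - B$ with $B \geqslant 0$ irreducible and $s = \rho(B)$. By the Perron--Frobenius theorem applied to $B$, the matrix $A$ has rank $n-1$, and both its right null space and its left null space are one-dimensional; moreover they are spanned by strictly positive vectors $\mathbf{v}\in \mathbb{R}^{n}_{>0}$ with $A\mathbf{v}=\mathbf{0}$ and $\mathbf{u}\in \mathbb{R}^{n}_{>0}$ with $\mathbf{u}^{\top}A = \mathbf{0}$.

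Next, I would suppose that $\begin{bmatrix}\mathbf{z}\\ c\end{bmatrix}\in \mathbb{R}^{n+1}$ lies in the kernel of the bordered matrix, so that
\begin{equation*}
A\mathbf{z} + c\mathbf{x} = \mathbf{0}, \qquad \mathbf{y}^{\top}\mathbf{z} = 0.
\end{equation*}
Multiplying the first equation on the left by $\mathbf{u}^{\top}$ annihilates the $A\mathbf{z}$ term and yields $c\,\mathbf{u}^{\top}\mathbf{x} = 0$. Because $\mathbf{u}>0$ and $\mathbf{x}>0$, the scalar $\mathbf{u}^{\top}\mathbf{x}$ is strictly positive, which forces $c=0$.

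With $c=0$, the first equation reduces to $A\mathbf{z}=\mathbf{0}$, so $\mathbf{z} = \alpha \mathbf{v}$ for some $\alpha \in \mathbb{R}$. Substituting into the second equation gives $\alpha\,\mathbf{y}^{\top}\mathbf{v} = 0$, and since $\mathbf{y}>0$ and $\mathbf{v}>0$ we have $\mathbf{y}^{\top}\mathbf{v}>0$, hence $\alpha=0$ and $\mathbf{z}=\mathbf{0}$. Therefore the kernel is trivial and the bordered $(n+1)\times(n+1)$ matrix is invertible.

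The only nontrivial step is the invocation of the Perron--Frobenius result giving simultaneously a positive right and positive left null vector for an irreducible singular $M$-matrix; once this is in hand, the rest is the standard two-line bordering argument using the strict positivity of $\mathbf{x}$ and $\mathbf{y}$. There is no real obstacle, but if one wanted to avoid quoting Perron--Frobenius for left eigenvectors directly, one could instead apply it to $A^{\top}$ (which is also an irreducible singular $M$-matrix) to obtain $\mathbf{u}$.
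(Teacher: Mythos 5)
Your proof is correct and follows essentially the same route as the paper: use the strictly positive left null vector of the irreducible singular $M$-matrix to force the border scalar to vanish, then use the strictly positive right null vector together with $\mathbf{y}>0$ to force the remaining component to vanish. In fact your version is slightly more careful than the paper's, since you correctly write $\mathbf{z}=\alpha\mathbf{v}$ (rather than asserting the null component is itself positive) before concluding $\alpha=0$.
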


\begin{proof}
Suppose that there exists a nonzero vector $\mathbf{z}=(\mathbf{z}_1^{\top},z_2)^{\top}\in \mathbb{R}^{n+1}$ such that
\begin{align}\label{eq2.3}
\left[\begin{array}{c|c}A&\mathbf{x}\\\hline
\mathbf{y}^{\top}&0\end{array}\right]\left[\begin{array}{c}\mathbf{z}_1\\z_2
\end{array}\right]=\mathbf{0}.
\end{align}
Then $A\mathbf{z}_1=-z_2\mathbf{x}$. Since $A$ is an  irreducible singular $M$-matrix, there is a vector $\mathbf{w}\in \mathbb{R}_{>0}^n$ such that $\mathbf{w}^{\top}A=\mathbf{0}^{\top}$. Then $z_2(\mathbf{w}^{\top}\mathbf{x})=-\mathbf{w}^{\top}A\mathbf{z}_1=0$. Since $\mathbf{w},\mathbf{x}\in \mathbb{R}_{>0}^n$, $\mathbf{w}^{\top}\mathbf{x}>0$ and hence $z_2=0$. From \eqref{eq2.3}, we have $A\mathbf{z}_1=\mathbf{0}$ and $\mathbf{y}^{\top}\mathbf{z}_1=0$. Since $A$ is an  irreducible singular $M$-matrix, $\mathbf{z}_1>0$, and hence $\mathbf{y}^{\top}\mathbf{z}_1>0$, a contradiction. This completes the proof.
\end{proof}

\subsection{Computing the Z-eigenpair of nonnegative tensors}

Given a tensor $\mathcal{A}\in \mathbb{R}_{\geqslant 0}^{[m,n]}$, let $
\mathcal{A}_0=\mathbf{x}_1\circ\cdots\circ\mathbf{x}_1\in \mathbb{R}_{>
0}^{[m,n]}$ be a symmetric rank-1 tensor, where $\mathbf{x}_1\in \mathbb{R}^{n}_{>0}$ is generic. Let
\begin{align}  \label{eqlamxz}
\mathbf{x}_0=\frac{\mathbf{x}_1}{\|\mathbf{x}_1\|}>0,\ \ \ \ \ \lambda_0=\|
\mathbf{x}_1\|\left(\mathbf{x}_1^{\top}\mathbf{x}_0\right)^{m-1}=\|\mathbf{x}_1\|^m>0.
\end{align}
It follows from Lemma \ref{lem2.2} $(i)$ that $(\lambda_0,\mathbf{x}_0)\in \mathbb{
R}^{n+1}_{>0}$ is a Z-eigenpair of $\mathcal{A}_0$ and $\mathbf{x}_0$ is
the unique Z-eigenvector of $\mathcal{A}_0$ in $\mathbb{R}^{n}_{\geqslant
0}$. Then $F^0_{\rm Z}(\mathbf{x}_0,\lambda_0)=H_{\rm Z}( \mathbf{x}_0,\lambda_0, 0)=\mathbf{0}$, where $F^0_{\rm Z}$ is defined in \eqref{eqZ} and $H_{\rm Z}$ is defined in
\eqref{eq2.7} with the symmetric rank-1 tensor $\mathcal{A}_0$.

Suppose that $(\mathbf{x}_*,\lambda_*,t_*)\in \mathbb{R}^{n}_{> 0}\times
\mathbb{R}_{> 0}\times [0,1)$ is a solution of $H_{\rm Z}( \mathbf{x},\lambda, t)=
\mathbf{0}$. The Jacobian matrix of $H_{\rm Z}$ at $(\mathbf{x}_*,\lambda_*,t_*)$
has the form
\begin{subequations}
\label{eq2.8}
\begin{align}  \label{eq2.8a}
\mathscr{D}_{\mathbf{x},\lambda,t}H_{\rm Z}(\mathbf{x}_*,\lambda_*,t_*)=[
\mathscr{D}_{\mathbf{x},\lambda}H_{\rm Z}(\mathbf{x}_*,\lambda_*,t_*)\ |\
\mathscr{D}_{t}H_{\rm Z}(\mathbf{x}_*,\lambda_*,t_*)],
\end{align}
where
\begin{align}
&\mathscr{D}_{\mathbf{x},\lambda}H_{\rm Z}(\mathbf{x}_*,\lambda_*,t_*)=\left[
\begin{array}{c|c}
A_{t_*}-\lambda_* I_n & -\mathbf{x}_* \\ \hline
2\mathbf{x}_*^{\top} & 0
\end{array}%
\right]\in \mathbb{R}^{(n+1)\times (n+1)},  \label{eq2.8b} \\
&\mathscr{D}_{t}H_{\rm Z}(\mathbf{x}_*,\lambda_*,t_*)=\left[%
\begin{array}{c}
(\mathcal{A}-\mathcal{A}_0)\mathbf{x}_*^{m-1} \\ \hline
0%
\end{array}%
\right]\in \mathbb{R}^{n+1} \label{eq2.8c}
\end{align}
with
\end{subequations}
\begin{align}
A_{t_*}&\equiv \mathscr{D}_{\mathbf{x}}(\mathcal{A}(t_*)\mathbf{x}^{m-1})|_{%
\mathbf{x}=\mathbf{x}_*}\notag \\
&=\sum_{k=2}^m\mathcal{A}(t_*)\times_2\mathbf{x}%
_*\cdots \times_{k-1}\mathbf{x}_*\times_{k+1}\mathbf{x}_*\cdots \times_m%
\mathbf{x}_*\in \mathbb{R}^{n\times n}. \label{derAt}
\end{align}
Since $\mathbf{x}_*> 0$, it follows from \eqref{derAt} that $A_{t_*}>0$ and $%
A_{t_*}\mathbf{x}_*=(m-1)\mathcal{A}(t_*)\mathbf{x}_*^{m-1}=(m-1)\lambda_*
\mathbf{x}_*$. Hence, $-(A_{t_*}-(m-1)\lambda_* I_n)$ is a singular $M$-matrix.
The leading submatrix of \eqref{eq2.8b}, $A_{t_*}-\lambda_* I_n$, has at
least one positive real eigenvalue when $m>2$.

Next, we show that $\mathbf{0}\in \mathbb{R}^{n+1}$ is a regular value of $%
H_{\rm Z}:\mathbb{R}_{> 0}^{n+1}\times [0,1)\rightarrow \mathbb{R}^{n+1}$.

\begin{Theorem}
\label{thm2.6} Let $\mathcal{A}\in \mathbb{R}^{[m,n]}_{\geqslant 0}$ and $%
\mathcal{A}_0=\mathbf{x}_1\circ\cdots\circ\mathbf{x}_1$, where $\mathbf{x}%
_1\in \mathbb{R}^{n}_{>0}$ is generic. Then $\mathbf{0}\in \mathbb{R}^{n+1}$
is a regular value of the homotopy function $H_{\rm Z}:\mathbb{R}_{>
0}^{n+1}\times [0,1)\rightarrow \mathbb{R}^{n+1}$ in \eqref{eq2.7}.
\end{Theorem}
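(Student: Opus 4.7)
My plan is to apply the Parameterized Sard's Theorem (Theorem~\ref{thm2.6.5}) by promoting $\mathbf{x}_1$ itself to a tuning parameter. Introduce the smooth extended map
\[
P:\mathbb{R}^n_{>0}\times \mathbb{R}_{>0}\times (-\infty,1)\times \mathbb{R}^n_{>0}\longrightarrow \mathbb{R}^{n+1},
\]
\[
P(\mathbf{x},\lambda,t,\mathbf{x}_1)=\begin{pmatrix}(1-t)\mathbf{x}_1(\mathbf{x}_1^{\top}\mathbf{x})^{m-1}+t\mathcal{A}\mathbf{x}^{m-1}-\lambda\mathbf{x}\\ \mathbf{x}^{\top}\mathbf{x}-1\end{pmatrix}.
\]
For each fixed $\mathbf{x}_1$, the restriction $P(\cdot,\mathbf{x}_1)$ coincides with $H_{\rm Z}$ on $\mathbb{R}^{n+1}_{>0}\times[0,1)$. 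Once I verify that $\mathbf{0}$ is a regular value of $P$, Theorem~\ref{thm2.6.5} will deliver regularity of $\mathbf{0}$ for $H_{\rm Z}(\cdot,\mathbf{x}_1)$ for almost every $\mathbf{x}_1$, which is exactly what ``generic $\mathbf{x}_1$'' means here.

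The core step is to show $\mathscr{D}P$ has full row rank $n+1$ at every zero $(\mathbf{x}_*,\lambda_*,t_*,\mathbf{x}_{1,*})$. I will test left null vectors: suppose $(\mathbf{v}^{\top},w)\in\mathbb{R}^{1\times n}\times\mathbb{R}$ annihilates $\mathscr{D}P$ from the left. The $\lambda$-column gives $\mathbf{v}^{\top}\mathbf{x}_*=0$ immediately. Differentiating the rank-1 term,
\[
\partial_{\mathbf{x}_1}\bigl[\mathbf{x}_1(\mathbf{x}_1^{\top}\mathbf{x}_*)^{m-1}\bigr]=(\mathbf{x}_{1,*}^{\top}\mathbf{x}_*)^{m-1}I_n+(m-1)(\mathbf{x}_{1,*}^{\top}\mathbf{x}_*)^{m-2}\mathbf{x}_{1,*}\mathbf{x}_*^{\top},
\]
and dividing out the positive factor $(1-t_*)(\mathbf{x}_{1,*}^{\top}\mathbf{x}_*)^{m-2}$, the $\mathbf{x}_1$-columns yield
\[
(\mathbf{x}_{1,*}^{\top}\mathbf{x}_*)\mathbf{v}^{\top}+(m-1)(\mathbf{v}^{\top}\mathbf{x}_{1,*})\mathbf{x}_*^{\top}=\mathbf{0}^{\top}.
\]
This forces $\mathbf{v}$ to be a scalar multiple of $\mathbf{x}_*$; combined with $\mathbf{v}^{\top}\mathbf{x}_*=0$ and $\mathbf{x}_*>0$, it produces $\mathbf{v}=\mathbf{0}$. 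The $\mathbf{x}$-columns then collapse to $2w\mathbf{x}_*^{\top}=\mathbf{0}^{\top}$, whence $w=0$, so the left null space is trivial.

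The delicate points I anticipate are the $\mathbf{x}_1$-derivative expansion and the combination ``$\mathbf{v}\in\mathrm{span}\{\mathbf{x}_*\}$ together with $\mathbf{v}\perp\mathbf{x}_*$'', which hinges essentially on the strict positivity of $\mathbf{x}_*$. The symmetric rank-1 structure $\mathcal{A}_0=\mathbf{x}_1\circ\cdots\circ\mathbf{x}_1$ is precisely what makes a single parameter vector sufficient to perturb $\mathscr{D}P$ into full rank; notably, I never need the $t$-column in the argument. With $\mathbf{0}$ established as a regular value of $P$, Theorem~\ref{thm2.6.5} immediately furnishes a full-measure set of admissible $\mathbf{x}_1$, completing the proof.
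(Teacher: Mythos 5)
Your proof is correct, and its core — promoting $\mathbf{x}_1$ to a parameter, computing the $\mathbf{x}_1$-derivative of the rank-one term, and invoking the Parameterized Sard Theorem (Theorem~\ref{thm2.6.5}) — is the same as the paper's. Where you genuinely depart is the treatment of the face $t=0$: the paper applies Sard only over the open interval $(0,1)$ and must then handle $t=0$ separately, verifying that $\mathscr{D}_{\mathbf{x},\lambda}H_{\rm Z}(\mathbf{x}_0,\lambda_0,0)$ is invertible at the unique positive solution, with a case split $m>2$ versus $m=2$ (the latter via Lemma~\ref{lem3.3}). You instead enlarge the $t$-range to the open interval $(-\infty,1)\supset[0,1)$; this is legitimate because $H_{\rm Z}$ extends polynomially in $t$ and your rank argument uses only $t_*<1$, $\mathbf{x}_*>0$, $\mathbf{x}_{1,*}>0$, so regularity on the larger open set restricts to $[0,1)$ and the boundary case is absorbed for free. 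Your left-null-vector computation is equivalent to the paper's observation that $(\mathbf{c}_*^{\top}\mathbf{x}_*)^{m-1}I_n+(m-1)(\mathbf{c}_*^{\top}\mathbf{x}_*)^{m-2}\mathbf{c}_*\mathbf{x}_*^{\top}$ is invertible while the last row of $\mathscr{D}_{\mathbf{u},t}P$ is nonzero. The trade-off: your route is shorter and uniform in $t$, but it delivers only what the theorem literally asserts — rank $n+1$ of the full Jacobian, including at $t=0$ — whereas the paper's extra work at $t=0$ additionally establishes invertibility of the square $(\mathbf{x},\lambda)$-Jacobian at the initial point, a stronger fact reused later (Case 2 in the proof of Theorem~\ref{thm2.7}). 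One cosmetic remark: the step ``$\mathbf{v}\in\mathrm{span}\{\mathbf{x}_*\}$ and $\mathbf{v}^{\top}\mathbf{x}_*=0$ imply $\mathbf{v}=\mathbf{0}$'' needs only $\mathbf{x}_*\neq\mathbf{0}$; strict positivity is really used to guarantee $\mathbf{x}_{1,*}^{\top}\mathbf{x}_*>0$ so that you may divide by it.
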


\begin{proof}
Let $P:\mathbb{R}^{n+1}_{>0}\times (0,1)\times \mathbb{R}^{n}_{>0}\rightarrow  \mathbb{R}^{n+1}$ be defined by $P(\mathbf{u},t,\mathbf{c})=H_{\rm Z}(\mathbf{u},t)$, where $\mathbf{u}=(\mathbf{x},\lambda)$ and $H_{\rm Z}(\mathbf{u},t)$ is given in \eqref{eq2.7} with $\mathcal{A}_0=\mathbf{c}\circ\cdots\circ\mathbf{c}\in \mathbb{R}^{[m,n]}_{>0}$. Now we show that $\mathbf{0}\in \mathbb{R}^{n+1}$ is a regular value of $P$. Let $(\mathbf{u}_*,t_*,\mathbf{c}_*)\in \mathbb{R}^{n+1}_{>0}\times (0,1)\times \mathbb{R}^{n}_{>0}$ be a solution of $P(\mathbf{u},t,\mathbf{c})=\mathbf{0}$, then
$\mathscr{D}_{\mathbf{u},t,\mathbf{c}}P(\mathbf{u}_*,t_*,\mathbf{c}_*)=[\mathscr{D}_{\mathbf{u},t}P(\mathbf{u}_*,t_*,\mathbf{c}_*)|\mathscr{D}_{\mathbf{c}}P(\mathbf{u}_*,t_*,\mathbf{c}_*)],
$
where $\mathscr{D}_{\mathbf{u},t}P(\mathbf{u}_*,t_*,\mathbf{c}_*)=\mathscr{D}_{\mathbf{x},\lambda,t}H_{\rm Z}(\mathbf{x}_*,\lambda_*,t_*)$ is given in \eqref{eq2.8} with $\mathbf{u}_*=(\mathbf{x}_*,\lambda_*)$ and $\mathcal{A}_0=\mathbf{c}_*\circ\cdots\circ\mathbf{c}_*$, and
\begin{align*}
\mathscr{D}_{\mathbf{c}}P(\mathbf{u}_*,t_*,\mathbf{c}_*)=(1-t_*)\left[\begin{array}{c}(\mathbf{c}_*^{\top}\mathbf{x}_*)^{m-1} I_n+(m-1)(\mathbf{c}_*^{\top}\mathbf{x}_*)^{m-2}\mathbf{c}_*\mathbf{x}_*^{\top}\\\hline \mathbf{0}^{\top}\end{array}\right]\in \mathbb{R}^{(n+1)\times n}.
\end{align*}
Since  $\mathbf{x}_*,\mathbf{c}_*\in \mathbb{R}^{n}_{>0}$, the matrix $(\mathbf{c}_*^{\top}\mathbf{x}_*)^{m-1} I_n+(m-1)(\mathbf{c}_*^{\top}\mathbf{x}_*)^{m-2}\mathbf{c}_*\mathbf{x}_*^{\top}$ is invertible. From \eqref{eq2.8}, the last row of the matrix $\mathscr{D}_{\mathbf{u},t}P(\mathbf{u}_*,t_*,\mathbf{c}_*)$ is nonzero, and hence rank$(\mathscr{D}_{\mathbf{u},t,\mathbf{c}}P(\mathbf{u}_*,t_*,\mathbf{c}_*))=n+1$. That is, $\mathbf{0}$ is a regular value of $P$. It follows from the Parameterized Sard's Theorem (Theorem \ref{thm2.6.5}) that for almost all $\mathbf{x}_1\in \mathbb{R}_{>0}^n$, $\mathbf{0}\in \mathbb{R}^{n+1}$ is a regular value of $H_{\rm Z}:\mathbb{R}_{> 0}^{n+1}\times (0,1)\rightarrow \mathbb{R}^{n+1}$ with $\mathcal{A}_0=\mathbf{x}_1\circ\cdots\circ\mathbf{x}_1$.

It remains to show that $\mathbf{0}$ is a regular value of $F^0_{\rm Z}(\mathbf{x},\lambda)\equiv H_{\rm Z}(\mathbf{x},\lambda,0)$ on $\mathbb{R}_{\geqslant 0}^{n+1}$. For almost all $\mathbf{x}_1\in \mathbb{R}_{>0}^n$, the system of polynomial equations $H_{\rm Z}(\mathbf{x},\lambda,0)=\mathbf{0}$ has only one solution $(\mathbf{x}_0,\lambda_0)$ in $\mathbb{R}_{>0}^{n+1}$, where $\mathbf{x}_0$ and  $\lambda_0=\|\mathbf{x}_1\|^m$ are given in \eqref{eqlamxz}. Now, we show that the Jacobian matrix
\begin{align*}
\mathscr{D}_{\mathbf{x},\lambda}H_{\rm Z}(\mathbf{x}_0,\lambda_0,0)=\left[\begin{array}{c|c}A_{0}-\lambda_0 I_n&-\mathbf{x}_0\\\hline
2\mathbf{x}_0^{\top}&0\end{array}\right]
\end{align*}
is invertible, where $A_0\equiv \mathscr{D}_{\mathbf{x}}(\mathcal{A}_0\mathbf{x}^{m-1})|_{\mathbf{x}=\mathbf{x}_0}=(m-1)\|\mathbf{x}_1\|^{m-2}\mathbf{x}_1\mathbf{x}_1^{\top}$ is given in \eqref{derAt}. It is easily seen that $A_0>0$ has only one nonzero eigenvalue $(m-1)\|\mathbf{x}_1\|^m$ corresponding eigenvector $\mathbf{x}_1$. Then $A_{0}-\lambda_0 I_n$ is nonsingular matrix when $m>2$.
\begin{itemize}
\item If  $m>2$, then the value $\mathbf{x}_0^{\top}(A_0-\lambda_0I_n)^{-1}\mathbf{x}_0=\frac{1}{(m-2)\|\mathbf{x}_0\|^m}\neq 0$, hence,  $\mathscr{D}_{\mathbf{x},\lambda}H_{\rm Z}(\mathbf{x}_0,\lambda_0,0)$ is invertible.
\item If $m=2$, then from Lemma \ref{lem3.3}, we obtain that  $\mathscr{D}_{\mathbf{x},\lambda}H_{\rm Z}(\mathbf{x}_0,\lambda_0,0)$ is invertible.
\end{itemize}
Since $\mathbf{0}$ is also a regular value of $H_{\rm Z}(\cdot,0)$ on $\mathbb{R}_{\geqslant 0}^{n+1}$,  $\mathbf{0}$ is a regular value of $H_{\rm Z}:\mathbb{R}_{> 0}^{n+1}\times [0,1)\rightarrow \mathbb{R}^{n+1}$.
\end{proof}

From Theorem \ref{thm2.6} and the implicit function theorem, we know that
the equation $H_{\rm Z}(\mathbf{x},\lambda, t)=\mathbf{0}$ has a solution curve $%
\mathbf{w}(s)$ with initial $\mathbf{w}(0)=(\mathbf{x}_0,\lambda_0,0)\equiv (%
\mathbf{x}_1/\|\mathbf{x}_1\|,\|\mathbf{x}_1\|^m,0)$,
\begin{align}  \label{eq2.9}
\mathbf{w}(s)\equiv (\mathbf{x}(s),\lambda(s),t(s))\in \mathbb{R}^{n}_{>
0}\times \mathbb{R}_{> 0}\times [0,1) \text{ for }s\in [0,s_{max}),
\end{align}
which can be parameterized by arc-length $s$, where $s_{max}$ is the largest
arc-length such that $\mathbf{w}(s)\in \mathbb{R}^{n}_{> 0}\times \mathbb{R}%
_{> 0}\times [0,1)$. Note that this curve, $\mathbf{w}(s)$ for $s\in
[0,s_{max})$, has no bifurcation and is bounded (by \eqref{equ_bound}). This
curve, $t(s)$ for $s\in[0,s_{max})$, may have turning points at some
parameters $s$. The following proposition shows that the turning point will
happen when $A_{t(s)}-\lambda(s)I$ is singular.

\begin{Proposition}
\label{prop2.0} Let $m>2$, $\mathcal{A}\in \mathbb{R}^{[m,n]}_{\geqslant 0}$ and $\mathcal{A}_0=\mathbf{x}_1\circ\cdots\circ%
\mathbf{x}_1\in \mathbb{R}^{[m,n]}_{>0}$, where $\mathbf{x}_1\in \mathbb{R}%
^{n}_{>0}$ is generic. Suppose that $\mathbf{w}(s)=(\mathbf{x}%
(s),\lambda(s),t(s))$ defined in \eqref{eq2.9} is the solution curve of %
\eqref{eq2.7}. If $t(s)$ has turning point at $s_*\in [0,s_{max})$ then $%
A_{t(s_*)}-\lambda(s_*)I$ is singular, where $A_{t(s_*)}\equiv \mathscr{D}_{%
\mathbf{x}}(\mathcal{A}(t(s_*))\mathbf{x}^{m-1})|_{\mathbf{x}=\mathbf{x}%
(s_*)}$.
\end{Proposition}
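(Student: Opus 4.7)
The plan is to argue by contrapositive: I will assume $A_{t(s_*)}-\lambda(s_*)I$ is nonsingular and derive that $t'(s_*)\neq 0$, contradicting the hypothesis that $s_*$ is a turning point of $t(\cdot)$. The starting observation is standard for arc-length-parameterized solution curves: differentiating the identity $H_{\rm Z}(\mathbf{x}(s),\lambda(s),t(s))=\mathbf{0}$ with respect to $s$ gives
\begin{equation*}
\mathscr{D}_{\mathbf{x},\lambda}H_{\rm Z}(\mathbf{x}(s_*),\lambda(s_*),t(s_*))\begin{bmatrix}\mathbf{x}'(s_*)\\\lambda'(s_*)\end{bmatrix}+\mathscr{D}_{t}H_{\rm Z}(\mathbf{x}(s_*),\lambda(s_*),t(s_*))\,t'(s_*)=\mathbf{0}.
\end{equation*}
If the Jacobian block $\mathscr{D}_{\mathbf{x},\lambda}H_{\rm Z}$ at $(\mathbf{x}(s_*),\lambda(s_*),t(s_*))$ turns out to be invertible, then $t'(s_*)=0$ would force $(\mathbf{x}'(s_*),\lambda'(s_*))=\mathbf{0}$, contradicting the arc-length normalization $\|\mathbf{x}'(s_*)\|^2+\lambda'(s_*)^2+t'(s_*)^2=1$. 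So it suffices to show that the invertibility of $A_{t(s_*)}-\lambda(s_*)I$ propagates to invertibility of the whole bordered matrix in \eqref{eq2.8b}.

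For this I would use a Schur-complement computation. Writing $A_*:=A_{t(s_*)}$ and $\lambda_*:=\lambda(s_*)$ and $\mathbf{x}_*:=\mathbf{x}(s_*)$, the determinant of the bordered matrix equals
\begin{equation*}
\det\mathscr{D}_{\mathbf{x},\lambda}H_{\rm Z}(\mathbf{x}_*,\lambda_*,t(s_*))=2\,\det(A_*-\lambda_*I)\cdot\mathbf{x}_*^{\top}(A_*-\lambda_*I)^{-1}\mathbf{x}_*,
\end{equation*}
provided $A_*-\lambda_*I$ is invertible. The key observation is that $\mathbf{x}_*$ is itself an eigenvector of $A_*-\lambda_*I$: the Euler-type identity already noted in the excerpt,
\begin{equation*}
A_*\mathbf{x}_*=(m-1)\mathcal{A}(t(s_*))\mathbf{x}_*^{m-1}=(m-1)\lambda_*\mathbf{x}_*,
\end{equation*}
gives $(A_*-\lambda_*I)\mathbf{x}_*=(m-2)\lambda_*\mathbf{x}_*$, and since $m>2$ and $\lambda_*>0$ this factor is strictly positive, so $(A_*-\lambda_*I)^{-1}\mathbf{x}_*=\mathbf{x}_*/((m-2)\lambda_*)$. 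Combining with $\mathbf{x}_*^{\top}\mathbf{x}_*=1$ yields
\begin{equation*}
\mathbf{x}_*^{\top}(A_*-\lambda_*I)^{-1}\mathbf{x}_*=\frac{1}{(m-2)\lambda_*}\neq 0,
\end{equation*}
so $\mathscr{D}_{\mathbf{x},\lambda}H_{\rm Z}$ is invertible, producing the desired contradiction.

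The only mildly delicate point is making sure the tangent equation is legitimately set up, i.e., that $\mathbf{w}(s)$ is a smooth $C^1$ curve near $s_*$ and that $\mathbf{x}_*>0$, $\lambda_*>0$ so that the Euler identity applies in the required form. Both are already in hand: smoothness comes from the regular-value statement of Theorem~\ref{thm2.6} together with the implicit function theorem, and positivity is exactly the domain in which $\mathbf{w}(s)$ was defined in \eqref{eq2.9}. Once these ingredients are invoked, the proof reduces to the short Schur-complement calculation above, and I do not anticipate any substantial obstacle beyond book-keeping.
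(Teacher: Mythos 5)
Your proof is correct and follows essentially the same route as the paper: the heart of both arguments is that the Euler-type identity $A_{t_*}\mathbf{x}_*=(m-1)\lambda_*\mathbf{x}_*$ gives $(A_{t_*}-\lambda_*I)^{-1}\mathbf{x}_*=\mathbf{x}_*/((m-2)\lambda_*)$, so $\mathbf{x}_*^{\top}(A_{t_*}-\lambda_*I)^{-1}\mathbf{x}_*=1/((m-2)\lambda_*)\neq 0$ and the bordered Jacobian \eqref{eq2.8b} is invertible. The only (inessential) difference is the final contradiction: you use the tangent equation plus the arc-length normalization to force $t'(s_*)\neq 0$, whereas the paper invokes the implicit function theorem to reparameterize the curve by $t$ near $t_*$; both conclusions are equivalent here.
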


\begin{proof}
Suppose $A_{t(s_*)}-\lambda(s_*)I$ is invertible. Denote $(\mathbf{x}_*,\lambda_*,t_*)=(\mathbf{x}(s_*),\lambda(s_*),t(s_*))$. Since $(A_{t_*}-\lambda_*I)\mathbf{x}_*=(m-1)\mathcal{A}(t_*)\mathbf{x}_*^{m-1}-\lambda_*\mathbf{x}_*=(m-2)\lambda_*\mathbf{x}_*$, $m>2$  and $\lambda_*\neq 0$, we have $(A_{t_*}-\lambda_*I)^{-1}\mathbf{x}_*=\frac{1}{(m-2)\lambda_*}\mathbf{x}_*$.
Hence, $\alpha\equiv\mathbf{x}_*^{\top}(A_{t_*}-\lambda_*I)^{-1}\mathbf{x}_*=\frac{1}{(m-2)\lambda_*}>0$. Then the Jacobian matrix $\mathscr{D}_{\mathbf{x},\lambda}H_{\rm Z}(\mathbf{x}_*,\lambda_*,t_*)$ in \eqref{eq2.8b} is invertible. By the implicit function theorem, the solution curve $\mathbf{w}(s)$ can be parametrized by $t$ when $t$ approximates $t_{*}=t(s_*)$, a contradiction.
\end{proof}

\begin{Theorem}
\label{thm2.7} Let $\mathcal{A}\in \mathbb{R}^{[m,n]}_{\geqslant 0}$ and $\mathcal{A}_0=\mathbf{x}_1\circ\cdots\circ\mathbf{x}%
_1\in \mathbb{R}^{[m,n]}_{>0}$, where $\mathbf{x}_1\in \mathbb{R}^{n}_{>0}$
is generic. Suppose that $\mathbf{w}(s)=(\mathbf{x}(s),\lambda(s),t(s))$
 in \eqref{eq2.9} is the solution curve of \eqref{eq2.7}. Then $
\lim_{s\rightarrow s_{max}^{-}}t(s)=1$.
\end{Theorem}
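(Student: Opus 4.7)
The plan is to argue by contradiction: suppose $\lim_{s\to s_{max}^-}t(s)\neq 1$; since $t(s)\in[0,1)$, there exist $\varepsilon>0$ and a sequence $s_k\to s_{max}^-$ with $t(s_k)\leqslant 1-\varepsilon$. First I would establish that the curve $\mathbf{w}(s)$ lies in a fixed compact set. The normalization $\mathbf{x}(s)^{\top}\mathbf{x}(s)=1$ pins $\|\mathbf{x}(s)\|=1$. Applying the bound \eqref{equ_bound} to $\mathcal{A}(t)=(1-t)\mathcal{A}_0+t\mathcal{A}$ yields $|\lambda(s)|\leqslant \varrho(\mathcal{A}(t(s)))\leqslant C$ uniformly in $t\in[0,1]$, since the entries of $\mathcal{A}(t)$ are dominated componentwise by those of $\mathcal{A}_0+\mathcal{A}$. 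Combined with $t(s)\in[0,1-\varepsilon]$ along the subsequence, this gives compactness.

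Next, I would pass to a further subsequence so that $(\mathbf{x}(s_k),\lambda(s_k),t(s_k))\to(\mathbf{x}_*,\lambda_*,t_*)$ with $t_*\leqslant 1-\varepsilon<1$. Continuity of $H_{\rm Z}$ gives $H_{\rm Z}(\mathbf{x}_*,\lambda_*,t_*)=\mathbf{0}$, and the limit satisfies $(\mathbf{x}_*,\lambda_*)\in\mathbb{R}^{n+1}_{\geqslant 0}$. Because $t_*<1$, we have $\mathcal{A}(t_*)>0$, so Lemma \ref{lem2.1} applied at $\mathcal{A}(t_*)$ (this is exactly how Theorem \ref{thm2.1} was proved) rules out $(\mathbf{x}_*,\lambda_*)\in\partial(\mathbb{R}^{n+1}_{\geqslant 0})$. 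Hence $(\mathbf{x}_*,\lambda_*,t_*)$ lies in the interior domain $\mathbb{R}^{n+1}_{>0}\times[0,1)$, on which Theorem \ref{thm2.6} guarantees $\mathbf{0}$ is a regular value of $H_{\rm Z}$.

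By the implicit function theorem, there exist $\delta>0$ and a smooth embedded arc $\gamma:(-\delta,\delta)\to\mathbb{R}^{n+1}_{>0}\times[0,1)$, parameterized by arc length with $\gamma(0)=(\mathbf{x}_*,\lambda_*,t_*)$, whose image coincides with $H_{\rm Z}^{-1}(\mathbf{0})$ on a neighborhood $U$ of $(\mathbf{x}_*,\lambda_*,t_*)$. Since $\mathbf{w}$ is also an arc-length parameterization of a smooth component of $H_{\rm Z}^{-1}(\mathbf{0})$, on the set $\{s\in[0,s_{max}):\mathbf{w}(s)\in U\}$ the two parameterizations must agree up to sign and translation, so this set is an interval of arc length at most $2\delta$. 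Because $\mathbf{w}(s_k)\to(\mathbf{x}_*,\lambda_*,t_*)\in U$, the indices $s_k$ eventually lie in this bounded interval, forcing $s_{max}<\infty$ and in fact $\mathbf{w}(s)\to(\mathbf{x}_*,\lambda_*,t_*)$ as $s\to s_{max}^-$ (as a $1$-Lipschitz curve on a finite interval). The arc $\gamma$ then extends $\mathbf{w}$ past $s_{max}$ while staying inside $\mathbb{R}^{n+1}_{>0}\times[0,1)$, contradicting the maximality of $s_{max}$.

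I expect the main obstacle to be the accumulation analysis at an interior regular point, in particular ruling out the pathological possibility that $s_{max}=\infty$ with the curve visiting a bounded neighborhood infinitely often. The key technical step is invoking the uniqueness of the local embedded arc from the implicit function theorem so that the global arc-length parameterization $\mathbf{w}$ and the local chart $\gamma$ are forced to agree locally; this converts any putative accumulation at an interior limit into the finite-$s_{max}$ scenario where the standard extension-by-implicit-function-theorem argument produces the contradiction.
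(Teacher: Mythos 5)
Your overall route is the same as the paper's: argue by contradiction, use the normalization together with the bound \eqref{equ_bound} to extract a convergent subsequence with limit $(\mathbf{x}_*,\lambda_*,t_*)$, $t_*<1$; use Lemma \ref{lem2.1}/Theorem \ref{thm2.1} to place the limit in $\mathbb{R}^{n+1}_{>0}\times[0,1)$; and use the regularity from Theorem \ref{thm2.6} plus the implicit function theorem to contradict the maximality of $s_{max}$. However, two steps are not justified as written. First, from local uniqueness of the solution arc in $U$ you conclude that $\{s:\mathbf{w}(s)\in U\}$ is a single interval of arc length at most $2\delta$. Local uniqueness only gives that on \emph{each connected component} of this set $\mathbf{w}$ agrees with $\gamma$ up to sign and translation; it does not by itself exclude the recurrent scenario you yourself flag as the main obstacle (for instance, $\mathbf{w}$ a closed periodic orbit, in which case $s_{max}=\infty$, the visit set has infinitely many components, and no conflict with maximality ever arises). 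To close this you need the global dichotomy that $\mathbf{w}$, having a continuous unit tangent forced to lie in the one-dimensional kernel of $\mathscr{D}_{\mathbf{x},\lambda,t}H_{\rm Z}$, is an integral curve of a consistently oriented tangent field and hence either injective or periodic, and then you must exclude periodicity using the structure at $t=0$: by Lemma \ref{lem2.2} the only positive solution of $H_{\rm Z}(\cdot,\cdot,0)=\mathbf{0}$ is $(\mathbf{x}_0,\lambda_0)$, and $\mathscr{D}_{\mathbf{x},\lambda}H_{\rm Z}(\mathbf{x}_0,\lambda_0,0)$ is invertible, so the local solution arc crosses $t=0$ transversally and a closed loop contained in $\{t\geqslant 0\}$ through the starting point is impossible.

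Second, your uniform treatment of the limit point breaks down when $t_*=0$, a case your argument does not exclude: there is then no two-sided arc $\gamma:(-\delta,\delta)\to\mathbb{R}^{n+1}_{>0}\times[0,1)$, since half of the local solution arc has $t<0$, and the concluding step ``extend $\mathbf{w}$ past $s_{max}$ inside the domain'' fails because the extension exits $\{t\geqslant 0\}$. This is precisely why the paper's proof splits off $t_*=0$ as a separate case, identifying the limit with $(\mathbf{x}_0,\lambda_0,0)$ and deriving the contradiction from the fact that near this point the only solutions are those on the initial segment of $\mathbf{w}$ itself. With these two repairs — the injective-or-periodic argument plus exclusion of periodicity, and a separate treatment of $t_*=0$ — your write-up becomes a complete proof, and indeed is more explicit than the paper's Case 1 about why $s_{max}$ must be finite and why the whole curve (not just a subsequence) converges.
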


\begin{proof}
Let $\mathbf{w}(s)\in \mathbb{R}^{n+1}_{> 0}\times [0,1)$ for $s\in[0,s_{max})$ be the solution curve of $H_{\rm Z}(\mathbf{x},\lambda, t)=\mathbf{0}$.  Suppose that the sequence $\{s_k\}_{k=1}^{\infty}\subset [0,s_{max})$ is increasing and $\lim_{k\rightarrow \infty}s_k=s_{max}$. Now, we show that $\lim_{k\rightarrow \infty}t(s_k)=1$.

Suppose that $\lim_{k\rightarrow \infty}t(s_k)\neq1$, then there exists a subsequence $\{s_{k_\ell}\}_{\ell=1}^{\infty}$ of $\{s_k\}_{k=1}^{\infty}$ such that $\lim_{\ell\rightarrow \infty}t(s_{k_{\ell}})=t_*\neq1$. It follows from \eqref{equ_bound} that the set $ (\mathbf{x}(s_{k_\ell}),\lambda(s_{k_\ell}))\in \mathbb{R}^{n}_{> 0}\times \mathbb{R}_{> 0}$ is bounded, then there is a subsequence $\{s_{k_{\hat{\ell}}}\}_{\hat{\ell}=1}^{\infty}$ of $\{s_{k_\ell}\}_{\ell=1}^{\infty}$ such that $\lim_{\hat{\ell}\rightarrow \infty}(\mathbf{x}(s_{k_{\hat{\ell}}}),\lambda(s_{k_{\hat{\ell}}}))=(\mathbf{x}_*,\lambda_*)\in  \mathbb{R}^{n}_{\geqslant 0}\times \mathbb{R}_{\geqslant 0}.$
It is easily seen that $H_{\rm Z}(\mathbf{x}_*,\lambda_*, t_*)=\mathbf{0}$, where $t_*\in [0,1)$. From Theorem \ref{thm2.1}, we have $\mathbf{x}_*>0$ and $\lambda_*>0$.
\begin{description}
  \item[Case1:] If $t_*\in (0,1)$, then the solution $(\mathbf{x}_*,\lambda_*, t_*)$ is in the set $ \mathbb{R}^{n}_{> 0}\times \mathbb{R}_{> 0}\times [0,1)$. It follows from Theorem \ref{thm2.6} that the equation $H_{\rm Z}(\mathbf{w})=\mathbf{0}$ has a solution curve in a certain neighborhood of $\mathbf{w}(s_{max})\equiv(\mathbf{x}_*,\lambda_*, t_*)$. This is a contradiction because $s_{max}$ is the largest arc-length such that $\mathbf{w}(s)\in \mathbb{R}^{n}_{> 0}\times \mathbb{R}_{> 0}\times [0,1)$.
  \item[Case2: ] If $t_*=0$, from Lemma \ref{lem2.2}, we obtain that $(\mathbf{x}_*,\lambda_*)=(\mathbf{x}_0,\lambda_0)$, where $(\mathbf{x}_0,\lambda_0)$ is defined in \eqref{eqlamxz}. It has been shown in Theorem \ref{thm2.6} that the Jacobian matrix $\mathscr{D}_{\mathbf{x},\lambda}H_{\rm Z}(\mathbf{x}_0,\lambda_0,0)$ defined in \eqref{eq2.8b} is invertible. By implicit function theorem, the solution curve $\mathbf{w}(s)$ in \eqref{eq2.9} can be parameterized by $t$ when $t$ approximates $0$ and there is no solution of $H_{\rm Z}(\mathbf{x},\lambda, t)=\mathbf{0}$ in $B_{\rho}((\mathbf{x}_0,\lambda_0, 0))$ other than $\mathbf{w}(s)$. This is contradiction.
\end{description}
Hence, $\lim_{s\rightarrow s_{max}^{-}}t(s)=1$.
\end{proof}

Theorem \ref{thm2.7} shows that $t(s)\rightarrow 1$ as $s\rightarrow
s_{max}^{-}$. Next, we will investigate the limit point of the curve, $(%
\mathbf{x}(s),\lambda(s))$, as $s\rightarrow s_{max}^{-}$, where $(\mathbf{x%
}(s),\lambda(s))$ is defined in \eqref{eq2.9}.

\begin{Theorem}
\label{thm2.8} Let  $\mathcal{A}\in \mathbb{R}^{[m,n]}_{\geqslant 0}$ and $\mathcal{A}_0=\mathbf{x}_1\circ\cdots\circ\mathbf{x}%
_1\in \mathbb{R}^{[m,n]}_{>0}$, where $\mathbf{x}_1\in \mathbb{R}^{n}_{>0}$
is generic. Then the solution curve $\mathbf{w}(s)=(\mathbf{x}%
(s),\lambda(s),t(s))$ for $s\in[0,s_{max})$ defined in \eqref{eq2.9}
satisfies the following properties.

\begin{itemize}
\item[(i)] There exist a sequence $\{s_k\}_{k=1}^{\infty}\subset [0,s_{max})$
and an accumulation point $\lambda_*\geqslant 0$ such that
$\lim_{k\rightarrow \infty}s_k=s_{max}\text{ and }\lim_{k\rightarrow
\infty}\lambda(s_k)=\lambda_*$;

\item[(ii)] For every such accumulation point $\lambda_*$, there exists a
vector $\mathbf{x}_*\geqslant 0$ such that the pair $(\lambda_*,\mathbf{x}_*)
$ is a Z-eigenpair of $\mathcal{A}$, i.e., $F_{\rm Z}(\mathbf{x}_*,\lambda_*)=%
\mathbf{0}$;

\item[(iii)] If $\mathcal{A}$ is weakly symmetric, then the eigenvalue curve
$\lambda(s)$ converges to $\lambda_*\geqslant 0$ as $s\rightarrow s_{max}^-$;

\item[(iv)] Let $(\mathbf{x}_*,\lambda_*)$ be such accumulation point of the
curve $(\mathbf{x}(s),\lambda(s))$ for $s\in [0,s_{\max})$. If $(\mathbf{x}%
_*,\lambda_*)$ is an isolated solution of $F_{\rm Z}(\mathbf{x},\lambda)=\mathbf{0}
$, then
\begin{align*}
\lim_{s\rightarrow s_{max}^-}(\mathbf{x}(s),\lambda(s))=(\mathbf{x}%
_*,\lambda_*).
\end{align*}
\end{itemize}
\end{Theorem}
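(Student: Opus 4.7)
My plan is to combine compactness with the limit $t(s)\to 1$ from Theorem~\ref{thm2.7}, and then upgrade the existence of accumulation points to full convergence via a connectedness argument on the $\omega$-limit set of the curve.

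For parts (i) and (ii), I first observe that the curve is bounded: $\|\mathbf{x}(s)\|=1$ holds by construction, and left-multiplying the first block of $H_{\rm Z}=\mathbf{0}$ by $\mathbf{x}(s)^\top$ gives $\lambda(s)=\mathbf{x}(s)^\top \mathcal{A}(t(s))\mathbf{x}(s)^{m-1}$, which is uniformly bounded since the entries of $\mathcal{A}(t)$ are convex combinations of those of $\mathcal{A}_0$ and $\mathcal{A}$ (or one may invoke \eqref{equ_bound} applied to $\mathcal{A}(t)$). Taking any sequence $s_k\to s_{max}^-$ and applying Bolzano--Weierstrass delivers a subsequence along which $\lambda(s_k)\to \lambda_*\geqslant 0$, establishing (i). Extracting a further subsequence yields $\mathbf{x}(s_k)\to\mathbf{x}_*\geqslant 0$; Theorem~\ref{thm2.7} gives $t(s_k)\to 1$, and continuity of $H_{\rm Z}$ produces $F_{\rm Z}(\mathbf{x}_*,\lambda_*)=H_{\rm Z}(\mathbf{x}_*,\lambda_*,1)=\mathbf{0}$, which is (ii).

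For (iii), I study the $\omega$-limit set
\[
 \Lambda = \bigcap_{\epsilon>0}\overline{\lambda\bigl([s_{max}-\epsilon,s_{max})\bigr)}.
\]
Each closure is the continuous image of an interval, hence compact and connected; a nested intersection of compact connected subsets of $\mathbb{R}$ is again compact and connected. By (ii), $\Lambda\subseteq \mathcal{Z}(\mathcal{A})$. When $\mathcal{A}$ is weakly symmetric, so is the convex combination $\mathcal{A}(t)$ (the rank-one tensor $\mathcal{A}_0$ is fully symmetric, and weak symmetry is preserved under linear combinations); the property cited in Section~2 then gives $|\mathcal{Z}(\mathcal{A})|<\infty$. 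A nonempty connected subset of a finite set is a single point, so $\Lambda=\{\lambda_*\}$ and $\lambda(s)\to\lambda_*$.

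Part (iv) uses the analogous set $\Omega=\bigcap_{\epsilon>0}\overline{(\mathbf{x},\lambda)([s_{max}-\epsilon,s_{max}))}$. By the same reasoning, $\Omega$ is nonempty, compact, and connected, and by (ii) $\Omega\subseteq F_{\rm Z}^{-1}(\mathbf{0})$. Since $(\mathbf{x}_*,\lambda_*)\in\Omega$ is assumed isolated in $F_{\rm Z}^{-1}(\mathbf{0})$, there is a neighborhood $U$ with $F_{\rm Z}^{-1}(\mathbf{0})\cap U=\{(\mathbf{x}_*,\lambda_*)\}$; hence $\{(\mathbf{x}_*,\lambda_*)\}$ is both open and closed in $\Omega$, and connectedness forces $\Omega=\{(\mathbf{x}_*,\lambda_*)\}$, i.e., $(\mathbf{x}(s),\lambda(s))\to(\mathbf{x}_*,\lambda_*)$. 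The principal technical point throughout is the connectedness of $\Lambda$ and $\Omega$; once the standard fact about nested intersections of compact connected sets is in place, the rest reduces to continuity, boundedness, and Theorem~\ref{thm2.7}.
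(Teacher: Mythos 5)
Your proposal is correct. Parts (i) and (ii) coincide with the paper's argument: boundedness of $\lambda(s)$ (either via \eqref{equ_bound} or via $\lambda(s)=\mathbf{x}(s)^{\top}\mathcal{A}(t(s))\mathbf{x}(s)^{m-1}$ with $\|\mathbf{x}(s)\|=1$), Bolzano--Weierstrass, a further subsequence for $\mathbf{x}(s_k)$, and continuity of $H_{\rm Z}$ together with $t(s)\to 1$ from Theorem~\ref{thm2.7}. For (iii) and (iv), however, you take a genuinely different route: you package the limiting behaviour into the $\omega$-limit sets $\Lambda$ and $\Omega$, prove they are nonempty, compact and connected as nested intersections of compact connected sets, and then conclude by noting that a connected subset of the finite set $\mathcal{Z}(\mathcal{A})$ (weak symmetry) must be a singleton, respectively that isolatedness makes $\{(\mathbf{x}_*,\lambda_*)\}$ clopen in $\Omega$. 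The paper instead argues both parts by hand: for (iii) it assumes two distinct accumulation points $\lambda_*^1<\lambda_*^2$ and uses continuity (intermediate values) to produce a whole interval of accumulation points, hence infinitely many Z-eigenvalues, contradicting finiteness of $\mathcal{Z}(\mathcal{A})$ for weakly symmetric tensors; for (iv) it constructs a sequence trapped in an annulus of radii $\min\{\delta_*/2,\epsilon/3\}$ and $\epsilon/2$ around $(\mathbf{x}_*,\lambda_*)$ and extracts from it a second zero of $F_{\rm Z}$ arbitrarily close to, but distinct from, the isolated solution. The underlying mechanism (continuity forces connected limit behaviour, which collides with discreteness of the limit candidates) is the same, but your $\omega$-limit formulation treats (iii) and (iv) uniformly and is arguably cleaner, at the cost of invoking the standard nested-intersection facts; the paper's version is more elementary and self-contained. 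Two cosmetic points: your indexing $[s_{max}-\epsilon,s_{max})$ tacitly assumes $s_{max}<\infty$ (if $s_{max}=\infty$, index instead by $s_0\uparrow s_{max}$, with no change to the argument), and the observation that $\mathcal{A}(t)$ is weakly symmetric is not needed --- only finiteness of $\mathcal{Z}(\mathcal{A})$ for the target tensor enters, since by (ii) the accumulation points are already Z-eigenvalues of $\mathcal{A}$ itself.
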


\begin{proof}
$(i)$ Using the fact  that the set $\{\lambda(s)\ |\ s\in [0,s_{max})\}\subset\mathbb{R}_{>0}$ is bounded, the assertion $(i)$ can be obtained.

$(ii)$ Suppose that $\lambda(s_k)\rightarrow \lambda_*$. Since $\mathbf{x}(s_k)\in \{\mathbf{x}\in\mathbb{R}^{n}_{>0}\ |\ \|\mathbf{x}\|=1\}$ for each $k$, there is a subsequence $\{s_{k_\ell}\}_{\ell=1}^{\infty}$ of $\{s_k\}_{k=1}^{\infty}$ such that $\mathbf{x}(s_{k_\ell})\rightarrow \mathbf{x}_*\geqslant 0$ with $\|\mathbf{x}_*\|=1$ as $\ell\rightarrow \infty$.  Using the fact that $H_{\rm Z}(\mathbf{x}(s_{k_\ell}),\lambda(s_{k_\ell}),t(s_{k_\ell}))=\mathbf{0}$ and $\lim_{\ell\rightarrow \infty}t(s_{k_\ell})=1$ (see Theorem \ref{thm2.7}), we have $H_{\rm Z}(\mathbf{x}_*,\lambda_*,1)=F_{\rm Z}(\mathbf{x}_*,\lambda_*)=\mathbf{0}$. Hence,  the pair $(\lambda_*,\mathbf{x}_*)$ is a Z-eigenpair of $\mathcal{A}$.

$(iii)$ Suppose that $\lambda(s)$ for $s\in[0,s_{max})$ does not converge as $s\rightarrow s_{max}^-$. Then $\lambda(s)$  has two different  accumulation points, $\lambda_*^1$ and $\lambda_*^2$ (say $\lambda_*^1<\lambda_*^2$), as $s\rightarrow s_{max}^-$. Since the eigenvalue curve $\lambda(s)\in \mathbb{R}_{>0}$ is continuous  for $s\in[0,s_{max})$, we obtain that each point $\lambda_*\in [\lambda_*^1,\lambda_*^2]$, $\lambda_*$ is an accumulation point. From $(i)$ and $(ii)$, we obtain that the tensor $\mathcal{A}$ has infinitely many Z-eigenvalues.  This is a contradiction because $\mathcal{A}$ is weakly symmetric,  $\mathcal{A}$ has only finitely many Z-eigenvalues (see Proposition 3.10 in \cite{Chang-Pearson-Zhang:2013}). Hence, $\lim_{s\rightarrow s_{max}^-}\lambda(s)=\lambda_*\geqslant 0$.

$(iv)$ Suppose $(\mathbf{x}(s),\lambda(s))$ for $s\in[0,s_{max})$ has another accumulation point $(\hat{\mathbf{x}}_*,\hat{\lambda}_*)$ such that $\delta_*=\|(\mathbf{x}_*- \hat{\mathbf{x}}_*,\lambda_*-\hat{\lambda}_*)\|>0$. Then for each $\epsilon>0$, by continuity of $(\mathbf{x}(s),\lambda(s))$, there exists an increasing sequence $\{s_k\}\subset [0,s_{max})$ such that $\lim_{k\rightarrow \infty}s_k=s_{max}$ and
\begin{align*}
0<\min\{\delta_*/2,\epsilon/3\}\leqslant\|(\mathbf{x}(s_k)-\mathbf{x}_*,\lambda(s_k)-\lambda_*)\|\leqslant \epsilon/2 \text{ for each }k=1,2,\ldots.
\end{align*}
Since the sequence $(\mathbf{x}(s_k),\lambda(s_k))$ is bounded, there exists an accumulation point $(\tilde{\mathbf{x}}_*,\tilde{\lambda}_*)$ of  the sequence $\{(\mathbf{x}(s_k),\lambda(s_k))\}_{k=1}^{\infty}$ such that $0<\|(\tilde{\mathbf{x}}_*-\mathbf{x}_*,\tilde{\lambda}_*-\lambda_*)\|<\epsilon$. From $(i)$ and $(ii)$, $(\tilde{\mathbf{x}}_*,\tilde{\lambda}_*)$ is a solution of  $F_{\rm Z}(\mathbf{x},\lambda)=\mathbf{0}$. This is a contradiction because $(\mathbf{x}_*,\lambda_*)$ is an isolated solution of $F_{\rm Z}(\mathbf{x},\lambda)=\mathbf{0}$.
\end{proof}

Note that the condition of Theorem \ref{thm2.8} $(iv)$ holds generically. We conjecture that the convergence of solution curve, $(\mathbf{x}(s),\lambda(s))$ as $s\rightarrow s_{max}^-$, is guaranteed even without
this condition.

In the following, we show the degree of $F^0_{\rm Z}$ on $\mathbb{R}^{n+1}_{> 0}$ for $%
\mathbf{p}=\mathbf{0}$ is only dependent on the dimension $n$, where $F^0_{\rm Z}$
is defined in \eqref{eqZ}.

\begin{Lemma}
\label{lem3.9} Let $\mathcal{A}_0=\mathbf{x}_1\circ\cdots\circ\mathbf{x}%
_1\in \mathbb{R}^{[m,n]}_{>0}$, where $\mathbf{x}_1\in \mathbb{R}^{n}_{>0}$
is generic. Then $\mathrm{deg}(F^0_{\rm Z}, \mathbb{R}^{n+1}_{> 0},\mathbf{0}%
)=(-1)^{n-1}$, where $F^0_{\rm Z}$ is defined in \eqref{eqZ}.
\end{Lemma}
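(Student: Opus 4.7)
The plan is to apply Definition 2.6 directly: since we will show that $F^0_{\rm Z}$ has a unique preimage of $\mathbf{0}$ in $\mathbb{R}^{n+1}_{>0}$ at which the Jacobian is invertible, the degree will reduce to the sign of a single determinant.

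First I would use Lemma \ref{lem2.2}$(i)$, specialized to the symmetric positive rank-1 tensor $\mathcal{A}_0 = \mathbf{x}_1\circ\cdots\circ\mathbf{x}_1$ (all factors equal and strictly positive). The lemma then guarantees that the only nonnegative eigenvector is $\mathbf{x}_0 = \mathbf{x}_1/\|\mathbf{x}_1\|$, with corresponding eigenvalue $\lambda_0 = \|\mathbf{x}_1\|^m > 0$. Hence $(F^0_{\rm Z})^{-1}(\mathbf{0}) \cap \mathbb{R}^{n+1}_{>0} = \{(\mathbf{x}_0,\lambda_0)\}$. Next, I would invoke Lemma \ref{lem2.1} (applied to $\mathcal{A}_0>0$) to see that no Z-eigenpair lies on $\partial(\mathbb{R}^{n+1}_{\geqslant 0})$, so $\mathbf{0}\notin F^0_{\rm Z}(\partial\mathbb{R}^{n+1}_{>0})$, which is the boundary condition needed in Definition \ref{def2.6}.

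The core step is to determine the sign of $\det \mathscr{D}_{\mathbf{x},\lambda}F^0_{\rm Z}(\mathbf{x}_0,\lambda_0)$. From \eqref{eq2.8b} with $t_*=0$, the Jacobian is
\begin{align*}
J := \begin{bmatrix} A_0 - \lambda_0 I_n & -\mathbf{x}_0 \\ 2\mathbf{x}_0^{\top} & 0 \end{bmatrix},\qquad A_0 = (m-1)\|\mathbf{x}_1\|^{m-2}\,\mathbf{x}_1\mathbf{x}_1^{\top}.
\end{align*}
Since $A_0$ is a positive scalar multiple of $\mathbf{x}_0\mathbf{x}_0^{\top}$, it has eigenvalue $(m-1)\lambda_0$ along $\mathbf{x}_0$ and $0$ on $\mathbf{x}_0^{\perp}$. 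Thus $A_0-\lambda_0 I_n$ has eigenvalue $(m-2)\lambda_0$ along $\mathbf{x}_0$ and eigenvalue $-\lambda_0$ of multiplicity $n-1$ on $\mathbf{x}_0^{\perp}$.

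For $m>2$, the block $A_0-\lambda_0 I_n$ is invertible and the Schur complement formula yields
\begin{align*}
\det J \;=\; \det(A_0-\lambda_0 I_n)\,\bigl(0-2\mathbf{x}_0^{\top}(A_0-\lambda_0 I_n)^{-1}(-\mathbf{x}_0)\bigr) \;=\; (m-2)\lambda_0(-\lambda_0)^{n-1}\cdot \tfrac{2}{(m-2)\lambda_0} \;=\; 2(-1)^{n-1}\lambda_0^{n-1},
\end{align*}
using $(A_0-\lambda_0 I_n)^{-1}\mathbf{x}_0 = \mathbf{x}_0/((m-2)\lambda_0)$. For $m=2$, $A_0-\lambda_0 I_n = -\lambda_0(I_n-\mathbf{x}_0\mathbf{x}_0^{\top})$ is singular, so I would conjugate by $\mathrm{diag}(U,1)$ where $U=[\mathbf{x}_0\ |\ U_1]$ is orthogonal, turning $J$ into a sparse matrix whose only nonzero entries are $-\lambda_0$ in positions $(i,i)$ for $i=2,\ldots,n$, a $-1$ in position $(1,n+1)$, and a $2$ in position $(n+1,1)$; a single row-swap reduces it to a diagonal form and again gives $\det J = 2(-1)^{n-1}\lambda_0^{n-1}$.

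In either case $\mathrm{Sgn}[\det J] = (-1)^{n-1}$, and Definition \ref{def2.6} gives $\mathrm{deg}(F^0_{\rm Z},\mathbb{R}^{n+1}_{>0},\mathbf{0}) = (-1)^{n-1}$. The only delicate point is the $m=2$ case, where the naive Schur complement fails because the top-left block is singular; this is why I separate the two cases and use the orthogonal change of basis above to handle $m=2$ cleanly.
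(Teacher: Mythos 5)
Your proposal is correct and follows essentially the same route as the paper: identify $(\mathbf{x}_0,\lambda_0)=(\mathbf{x}_1/\|\mathbf{x}_1\|,\|\mathbf{x}_1\|^m)$ as the unique solution of $F^0_{\rm Z}=\mathbf{0}$ in the nonnegative orthant via Lemma \ref{lem2.2}$(i)$, and reduce the degree to $\mathrm{Sgn}[\det \mathscr{D}_{\mathbf{x},\lambda}F^0_{\rm Z}(\mathbf{x}_0,\lambda_0)]$, which both arguments evaluate as $2(-1)^{n-1}\lambda_0^{n-1}$. The only difference is computational: the paper uses a single orthogonal similarity $\widehat{Q}^{\top}(\cdot)\widehat{Q}$ that block-diagonalizes the Jacobian uniformly for all $m\geqslant 2$ (the $2\times 2$ block $\left[\begin{smallmatrix}(m-2)\|\mathbf{x}_1\|^m & -1\\ 2 & 0\end{smallmatrix}\right]$ has determinant $2$ regardless of whether $m=2$), whereas you split into $m>2$ (Schur complement) and $m=2$ (your orthogonal change of basis, which is exactly the paper's trick), both yielding the same sign.
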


\begin{proof}
From Lemma \ref{lem2.2} $(i)$, we know that $(\mathbf{x}_0,\lambda_0)=(\mathbf{x}_1/\|\mathbf{x}_1\|,\|\mathbf{x}_1\|^{m})$ is the unique solution of  $F^0_{\rm Z}(\mathbf{x},\lambda)=\mathbf{0}$ on $\mathbb{R}^{n+1}_{\geqslant 0}$. Let $Q\in \mathbb{R}^{n\times n}$ be an orthogonal matrix such that $Q\mathbf{x}_0=\mathbf{e}_n\equiv [0,\cdots,0,1]^{\top}$ and $\widehat{Q}=\left[\begin{array}{c|c}Q&0\\\hline 0&1\end{array}\right]$. Then the Jacobian matrix
\begin{align*}
\mathscr{D}_{\mathbf{x},\lambda}F^0_{\rm Z}(\mathbf{x}_0,\lambda_0)&=\left[\begin{array}{c|c}(m-1)\|\mathbf{x}_1\|^{m-2}\mathbf{x}_1\mathbf{x}_1^{\top}-\lambda_0 I_n&-\mathbf{x}_0\\\hline
2\mathbf{x}_0^{\top}&0\end{array}\right]\\
&=\widehat{Q}^{\top}\left[\begin{array}{c|c}(m-1)\|\mathbf{x}_1\|^{m}\mathbf{e}_n\mathbf{e}_n^{\top}-\|\mathbf{x}_1\|^{m} I_n&-\mathbf{e}_n\\\hline
2\mathbf{e}_n^{\top}&0\end{array}\right]\widehat{Q}\\
&=\widehat{Q}^{\top} \left[(-\|\mathbf{x}_1\|^{m}I_{n-1})\oplus \left[\begin{array}{c|c}(m-2)\|\mathbf{x}_1\|^{m}&-1\\\hline 2&0\end{array}\right]\right]\widehat{Q}.
\end{align*}
So, ${\rm deg}(F^0_{\rm Z}, \mathbb{R}^{n+1}_{> 0},\mathbf{0})={\rm Sgn}({\rm det }(\mathscr{D}_{\mathbf{x},\lambda}F^0_{\rm Z}(\mathbf{x}_0,\lambda_0)))={\rm Sgn}\left((-1)^{n-1}2\|\mathbf{x}_1\|^{m(n-1)}\right)=(-1)^{n-1}$.
\end{proof}

\begin{Theorem}
\label{thm2.10} Suppose that $\mathcal{A}\in \mathbb{R}^{[m,n]}_{\geqslant 0}
$ is irreducible. Then $\mathrm{deg}(F_{\rm Z}, \mathbb{R}^{n+1}_{> 0},\mathbf{0})=(-1)^{n-1},$
where $F_{\rm Z}$ is defined in \eqref{eqZ}.
\end{Theorem}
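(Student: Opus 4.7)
The plan is to apply the homotopy invariance of the degree (Theorem \ref{thm2.9}) to the linear homotopy $H_{\rm Z}(\mathbf{x},\lambda,t)$ in \eqref{eq2.7}, which connects $F^0_{\rm Z}$ (whose degree is computed in Lemma \ref{lem3.9}) to $F_{\rm Z}$. If a bounded open $\Omega\subset\mathbb{R}^{n+1}_{>0}$ can be chosen so that $H_{\rm Z}(\mathbf{x},\lambda,t)\neq \mathbf{0}$ on $\partial\Omega\times[0,1]$ and $\Omega$ contains every solution of $F^0_{\rm Z}=\mathbf{0}$ and of $F_{\rm Z}=\mathbf{0}$ lying in $\mathbb{R}^{n+1}_{>0}$, then the homotopy invariance plus excision force $\mathrm{deg}(F_{\rm Z},\Omega,\mathbf{0})=\mathrm{deg}(F^0_{\rm Z},\Omega,\mathbf{0})=(-1)^{n-1}$.

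The first step is to construct $\Omega$. Since any solution of $H_{\rm Z}(\cdot,\cdot,t)=\mathbf{0}$ satisfies $\|\mathbf{x}\|=1$, and the bound \eqref{equ_bound} applied to $\mathcal{A}(t)=(1-t)\mathcal{A}_0+t\mathcal{A}$ gives $|\lambda|\leqslant R_0$ uniformly in $t\in[0,1]$ (by convexity of the row-sum bound), I would take
\begin{equation*}
\Omega=\{(\mathbf{x},\lambda)\in\mathbb{R}^{n+1}_{>0}:\|\mathbf{x}\|<2,\ \lambda<R\},
\end{equation*}
with $R>R_0$. The faces $\{\|\mathbf{x}\|=2\}$ and $\{\lambda=R\}$ are then automatically free of solutions by the choice of $\Omega$.

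The main step is to verify that no solution lies on the remaining part of $\partial\Omega$, which is contained in $\partial\mathbb{R}^{n+1}_{\geqslant 0}$. For $t\in[0,1)$ this is exactly Theorem \ref{thm2.1}, since $\mathcal{A}(t)>0$. The only delicate case is $t=1$, where $\mathcal{A}$ is merely irreducible, not strictly positive, so Lemma \ref{lem2.1} does not apply directly. Here I would give a Perron--Frobenius style argument: suppose $(\mathbf{x},\lambda)\in\partial\mathbb{R}^{n+1}_{\geqslant 0}$ is a nonnegative Z-eigenpair with $\mathbf{x}\neq \mathbf{0}$, and let $S=\{i:x_i=0\}$. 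If $S$ is a nonempty proper subset of $\{1,\ldots,n\}$, irreducibility (Definition \ref{def2.4}(i)) supplies indices $i_1\in S$ and $i_2,\ldots,i_m\notin S$ with $\mathcal{A}_{i_1,\ldots,i_m}>0$, so $(\mathcal{A}\mathbf{x}^{m-1})_{i_1}>0$, contradicting $\lambda x_{i_1}=0$. Hence $\mathbf{x}>0$, and applying the irreducibility condition with each singleton $S=\{i\}$ shows $(\mathcal{A}\mathbf{x}^{m-1})_i>0$ for all $i$, which forces $\lambda>0$. Thus no solution of $F_{\rm Z}=\mathbf{0}$ meets $\partial\mathbb{R}^{n+1}_{\geqslant 0}$, which is where I expect the main technical content to lie.

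With the boundary condition $H_{\rm Z}\neq\mathbf{0}$ on $\partial\Omega\times[0,1]$ established, Theorem \ref{thm2.9} (via the weakly-defined degree of Remarks \ref{rem2.1} and \ref{rem2.2}, which permits us to bypass the regular-value hypothesis at $t=1$) gives $\mathrm{deg}(F_{\rm Z},\Omega,\mathbf{0})=\mathrm{deg}(F^0_{\rm Z},\Omega,\mathbf{0})$. Since by Lemma \ref{lem2.2}(i) the only positive solution of $F^0_{\rm Z}=\mathbf{0}$ is $(\mathbf{x}_0,\lambda_0)\in\Omega$, excision identifies the right-hand side with $\mathrm{deg}(F^0_{\rm Z},\mathbb{R}^{n+1}_{>0},\mathbf{0})=(-1)^{n-1}$ from Lemma \ref{lem3.9}, and the conclusion follows.
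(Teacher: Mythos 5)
Your proposal is correct and follows essentially the same route as the paper: homotopy invariance of the (weakly defined) degree on a bounded open set $\Omega$ obtained from the uniform eigenvalue bound \eqref{equ_bound}, combined with Lemma \ref{lem3.9} for the starting degree and boundary-freeness via Theorem \ref{thm2.1} for $t\in[0,1)$ and irreducibility at $t=1$. The only differences are cosmetic: you spell out the $t=1$ boundary step (that an irreducible nonnegative tensor has no nonnegative Z-eigenpair on $\partial(\mathbb{R}^{n+1}_{\geqslant 0})$) with a direct Perron--Frobenius argument where the paper simply invokes this known property, and you use a box instead of a ball for $\Omega$.
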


\begin{proof}
Let  $\mathcal{A}_0=\mathbf{x}_1\circ\cdots\circ\mathbf{x}_1\in \mathbb{R}^{[m,n]}_{> 0}$ with $\mathbf{x}_1\in \mathbb{R}^{n}_{>0}$ being generic. Let $\mathbf{u}=(\mathbf{x}^{\top},\lambda)^{\top}\in \mathbb{R}^{n+1}$ and $H_{\rm Z}(\mathbf{u},t)$ be defined in \eqref{eq2.7}. Then $F^0_{\rm Z}(\mathbf{u})=H_{\rm Z}(\mathbf{u},0)$ and $F_{\rm Z}(\mathbf{u})=H_{\rm Z}(\mathbf{u},1)$. From \eqref{equ_bound}, there exists continuous function, $\rho(t)$ for $t\in [0,1]$, such that $(1-t)\mathcal{A}_0+t\mathcal{A}$ has no Z-eigenpair on the set $\mathbb{R}^{n+1}_{\geqslant 0}\backslash\Omega(t)$, where  $\Omega(t)\equiv \{\mathbf{u}\in \mathbb{R}^{n+1}_{> 0}\ |\ \|\mathbf{u}\|<\rho(t)\}$. Let $\rho =\max_{t\in[0,1]}\rho(t)$ and $\Omega\equiv \{\mathbf{u}\in \mathbb{R}^{n+1}_{> 0}\ |\ \|\mathbf{u}\|<\rho\}$ be a bounded open set. Then for all $t\in [0,1]$, $H_{\rm Z}(\mathbf{x}, \lambda,t)\neq \mathbf{0}$ on the set $\mathbb{R}^{n+1}_{\geqslant 0}\backslash\Omega$.
Since $\mathcal{A}$ is irreducible, $\mathcal{A}$ has no Z-eigenpair on $\partial(\mathbb{R}^{n+1}_{>0})$. From Theorem \ref{thm2.1}, we obtain that $H_{\rm Z}(\mathbf{u},t)\neq \mathbf{0}$ on $\partial\Omega\times [0,1]$. It follows from the Homotopy Invariance of Degree theorem (Theorem \ref{thm2.9}) and Lemma \ref{lem3.9} that
${\rm deg}(F_{\rm Z}, \mathbb{R}^{n+1}_{> 0},\mathbf{0})={\rm deg}(F_{\rm Z}, \Omega,\mathbf{0})={\rm deg}(F^0_{\rm Z}, \Omega,\mathbf{0})
={\rm deg}(F^0_{\rm Z}, \mathbb{R}^{n+1}_{> 0},\mathbf{0})=(-1)^{n-1}. $
\end{proof}

The following result can be obtained from Theorem \ref{thm2.10} directly.

\begin{Corollary}
\label{cor2.11} Let $\mathcal{A}\in \mathbb{R}^{[m,n]}_{\geqslant 0}$ be
irreducible. Suppose that all solutions of $F_{\rm Z}(\mathbf{x},\lambda)=\mathbf{0%
}$ in $\mathbb{R}^{n+1}_{\geqslant 0}$ are isolated, where $F_{\rm Z}$ is defined
in \eqref{eqZ}. Then the number of positive Z-eigenpairs of $\mathcal{A}$,
counting multiplicities, is $2k+1$ for some integer $k\geqslant 0$.
\end{Corollary}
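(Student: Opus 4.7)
The plan is to read off the parity of the number of positive Z-eigenpairs from the degree computation in Theorem \ref{thm2.10}. First I would collect three ingredients: (a) Theorem \ref{thm2.10} gives $\mathrm{deg}(F_{\rm Z},\mathbb{R}^{n+1}_{>0},\mathbf{0})=(-1)^{n-1}\in\{\pm 1\}$; (b) by the Perron--Frobenius property for irreducible nonnegative tensors recalled in Section 2 (and used in the proof of Theorem \ref{thm2.10}), every nonnegative Z-eigenpair of $\mathcal{A}$ actually lies in $\mathbb{R}^{n+1}_{>0}$, so the nonnegative solutions of $F_{\rm Z}(\mathbf{x},\lambda)=\mathbf{0}$ coincide with its positive Z-eigenpairs; (c) combining the hypothesis that these solutions are isolated with the boundedness estimate \eqref{equ_bound}, they form a finite set
\[
\{(\mathbf{x}^{(1)},\lambda^{(1)}),\ldots,(\mathbf{x}^{(N)},\lambda^{(N)})\}=F_{\rm Z}^{-1}(\mathbf{0})\cap\overline{\mathbb{R}^{n+1}_{>0}},
\]
and this set sits inside the bounded open set $\Omega$ used in the proof of Theorem \ref{thm2.10} so that the degree is computable via Definition \ref{def2.6}.

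Next I would unfold the degree along these isolated zeros, invoking Remark \ref{rem2.1} to cover any zero at which $\mathbf{0}$ fails to be a regular value; this assigns to each $(\mathbf{x}^{(i)},\lambda^{(i)})$ an integer local index $m_{i}\in\mathbb{Z}$, which I take as the multiplicity of that eigenpair, with
\[
\sum_{i=1}^{N}m_{i}=\mathrm{deg}(F_{\rm Z},\mathbb{R}^{n+1}_{>0},\mathbf{0})=(-1)^{n-1}.
\]
Since $|m_{i}|\equiv m_{i}\pmod{2}$ for every $i$, the number of positive Z-eigenpairs counted with multiplicities satisfies
\[
\sum_{i=1}^{N}|m_{i}|\equiv \sum_{i=1}^{N}m_{i}\equiv 1\pmod{2},
\]
so this total is odd. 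The Perron--Frobenius property for irreducible nonnegative tensors guarantees at least one positive Z-eigenpair, hence the total is at least $1$ and therefore equals $2k+1$ for some integer $k\geqslant 0$.

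The whole argument is a direct parity corollary of Theorem \ref{thm2.10}, so no step should be genuinely hard. The only subtlety I anticipate is making precise the meaning of \emph{counting multiplicities} when some $(\mathbf{x}^{(i)},\lambda^{(i)})$ is degenerate; this is handled by identifying the multiplicity with the absolute value of the local Brouwer index via Remark \ref{rem2.1} and a small perturbation to a regular value, after which the mod-$2$ identity $|m_{i}|\equiv m_{i}\pmod{2}$ closes the proof.
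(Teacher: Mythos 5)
Your argument is correct and is precisely the route the paper intends: the paper offers no written proof beyond ``this follows directly from Theorem \ref{thm2.10}'', and your parity argument (finiteness of the positive zeros from isolation plus the bound \eqref{equ_bound}, positivity of all nonnegative zeros from irreducibility, additivity of the degree over local indices, and $\sum_i m_i=(-1)^{n-1}$ forcing an odd count) is exactly that intended deduction made explicit. Your handling of degenerate zeros via Remark \ref{rem2.1}, identifying ``multiplicity'' with the absolute local index so that $|m_i|\equiv m_i\pmod 2$, is a reasonable formalization of the paper's unstated notion of multiplicity, so no further comment is needed.
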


\subsection{Computing the H-eigenpair of nonnegative tensors}

Given a tensor $\mathcal{A}\in \mathbb{R}_{\geqslant 0}^{[m,n]}$, let $%
\mathcal{A}_0\in \mathbb{R}_{> 0}^{[m,n]}$ in \eqref{eq2.1} be a rank-1
positive tensor. Let
\begin{align}  \label{eqlamx}
\lambda_0=\prod_{k=2}^m\left(\mathbf{x}_k^{\top}\mathbf{x}%
_1^{[1/(m-1)]}\right)>0,\ \ \ \mathbf{x}_0=\frac{\mathbf{x}_1^{[1/(m-1)]}}{\|%
\mathbf{x}_1^{[1/(m-1)]}\|}>0.
\end{align}
It follows from Lemma \ref{lem2.2} $(ii)$ that $(\lambda_0,\mathbf{x}_0)\in \mathbb{%
R}_{>0}^{n+1}$ is a H-eigenpair of $\mathcal{A}_0$ and $\mathbf{x}_0$ is
the unique H-eigenvector of $\mathcal{A}_0$ in $\mathbb{R}^{n}_{\geqslant
0}$. Then $F^0_{\rm H}(\mathbf{x}_0,\lambda_0)=H_{\rm H}(\mathbf{x}_0,\lambda_0,0)=\mathbf{%
0}$, where $F^0_{\rm H}$ and $H_{\rm H}$ are defined in \eqref{eqH} and \eqref{eq2.2},
respectively.

\begin{Lemma}
\label{lem2.3} Suppose that $(\mathbf{x}_*,\lambda_*,t_*)\in \mathbb{R}%
^{n+1}_{> 0}\times [0,1)$ is a solution of $H_{\rm H}(\mathbf{x},\lambda,t)=\mathbf{%
0}$. Then the Jacobian
matrix $\mathscr{D}_{\mathbf{x},\lambda}H_{\rm H}(\mathbf{x}_*,\lambda_*,t_*)\in
\mathbb{R}^{(n+1)\times (n+1)}$ is invertible.
\end{Lemma}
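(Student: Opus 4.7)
The plan is to compute the Jacobian in block form, factor out a convenient positive diagonal matrix so that its $(1,1)$-block becomes a standard irreducible singular $M$-matrix, and then invoke Lemma \ref{lem3.3}.

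First I would write out
\begin{align*}
\mathscr{D}_{\mathbf{x},\lambda}H_{\rm H}(\mathbf{x}_*,\lambda_*,t_*)=\left[\begin{array}{c|c}A_{t_*}-(m-1)\lambda_* D_* & -\mathbf{x}_*^{[m-1]}\\ \hline 2\mathbf{x}_*^{\top} & 0\end{array}\right],
\end{align*}
where $D_*=\mathrm{diag}(\mathbf{x}_*^{[m-2]})>0$ and $A_{t_*}$ is as in \eqref{derAt}. Since $\mathcal{A}(t_*)=(1-t_*)\mathcal{A}_0+t_*\mathcal{A}>0$ for $t_*\in[0,1)$, the matrix $A_{t_*}$ has strictly positive entries, and the H-eigenequation $\mathcal{A}(t_*)\mathbf{x}_*^{m-1}=\lambda_*\mathbf{x}_*^{[m-1]}$ combined with \eqref{derAt} gives $A_{t_*}\mathbf{x}_*=(m-1)\lambda_* D_*\mathbf{x}_*$, so the $(1,1)$-block is singular with $\mathbf{x}_*>0$ in its kernel.

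Next, using the componentwise identity $D_*^{-1}\mathbf{x}_*^{[m-1]}=\mathbf{x}_*$, I would perform the factorization
\begin{align*}
\mathscr{D}_{\mathbf{x},\lambda}H_{\rm H}(\mathbf{x}_*,\lambda_*,t_*)=\left[\begin{array}{c|c}-D_* & 0 \\ \hline 0 & 1\end{array}\right]\left[\begin{array}{c|c}(m-1)\lambda_* I - D_*^{-1}A_{t_*} & \mathbf{x}_* \\ \hline 2\mathbf{x}_*^{\top} & 0\end{array}\right].
\end{align*}
The matrix $M:=D_*^{-1}A_{t_*}$ is entry-wise positive, hence irreducible, and satisfies $M\mathbf{x}_*=(m-1)\lambda_*\mathbf{x}_*$ with $\mathbf{x}_*>0$; by the Perron--Frobenius theorem this forces $(m-1)\lambda_*=\rho(M)$. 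Consequently $(m-1)\lambda_* I-M$ is an irreducible singular $M$-matrix.

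Finally I would apply Lemma \ref{lem3.3} to the right-hand factor with $A=(m-1)\lambda_* I-M$, $\mathbf{x}=\mathbf{x}_*$, and $\mathbf{y}=2\mathbf{x}_*\in\mathbb{R}_{>0}^n$ to conclude that this factor is invertible; since $-D_*$ is trivially invertible, the full Jacobian is invertible. I expect the main obstacle to be spotting the correct diagonal rescaling: the non-constant diagonal of $A_{t_*}-(m-1)\lambda_* D_*$ makes it awkward to see an $M$-matrix structure directly, but the observation $D_*^{-1}\mathbf{x}_*^{[m-1]}=\mathbf{x}_*$ converts the off-diagonal column $-\mathbf{x}_*^{[m-1]}$ into precisely the positive vector needed by Lemma \ref{lem3.3}.
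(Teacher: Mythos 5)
Your proof is correct and follows essentially the same route as the paper: the paper computes the same Jacobian, observes that $A_{t_*}>0$ and $A_{t_*}\mathbf{x}_*=(m-1)\lambda_*\mathbf{x}_*^{[m-1]}$ make $-(A_{t_*}-(m-1)\lambda_*[\![\mathbf{x}_*^{[m-2]}]\!])$ an irreducible singular $M$-matrix with null vector $\mathbf{x}_*$, and then invokes Lemma \ref{lem3.3} directly on the bordered matrix (with border vectors $\mathbf{x}_*^{[m-1]}$ and $2\mathbf{x}_*$). Your extra factorization by $\mathrm{diag}(-D_*,1)$ and the explicit Perron--Frobenius identification $(m-1)\lambda_*=\rho(D_*^{-1}A_{t_*})$ are a valid but unnecessary detour; the $M$-matrix structure is already visible without rescaling.
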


\begin{proof}
Suppose that $(\mathbf{x}_*,\lambda_*,t_*)\in \mathbb{R}^{n+1}_{> 0}\times [0,1)$ is a solution of $H_{\rm H}(\mathbf{x},\lambda,t)=\mathbf{%
0}$, we have $\mathcal{A}(t_*)\mathbf{x}_*^{m-1}=\lambda_*\mathbf{x}_*^{[m-1]}$, where $\mathcal{A}(t_*)\in \mathbb{R}_{>0}^{[m,n]}$ is defined in \eqref{eqAt}.  Then
\begin{align*}
\mathscr{D}_{\mathbf{x},\lambda}H_{\rm H}(\mathbf{x}_*,\lambda_*,t_*)=\left[\begin{array}{c|c}A_{t_*}-(m-1)\lambda_* [\![\mathbf{x}_*^{[m-2]}]\!]&-\mathbf{x}_*^{[m-1]}\\\hline
2\mathbf{x}_*^{\top}&0\end{array}\right]\in \mathbb{R}^{(n+1)\times (n+1)},
\end{align*}
where  $[\![\mathbf{x}]\!]$ denotes a squared diagonal matrix with the elements of vector $\mathbf{x}$ on the main diagonal, and $A_{t_*}=\mathscr{D}_{\mathbf{x}}(\mathcal{A}(t_*)\mathbf{x}^{m-1})|_{\mathbf{x}=\mathbf{x}_*}$ is given in \eqref{derAt}. Using the fact that $\mathcal{A}(t_*)>0$ and $\mathbf{x}_*>0$, we obtain $A_{t_*}>0$. Since $A_{t_*}\mathbf{x}_*=(m-1)\mathcal{A}(t_*)\mathbf{x}_*^{m-1}=(m-1)\lambda_*\mathbf{x}_*^{[m-1]}$, the matrix $-(A_{t_*}-(m-1)\lambda_* [\!|\mathbf{x}_*^{[m-2]}|\!])$ is singular $M$-matrix with singular vector $\mathbf{x}_*$.   It follows from Lemma \ref{lem3.3} that $\mathscr{D}_{\mathbf{x},\lambda}H_{\rm H}(\mathbf{x}_*,\lambda_*,t_*)$ is invertible.
\end{proof}

Since $\left(\mathbf{x}_0,\lambda_0,0\right)$ is a solution of $H_{\rm H}(\mathbf{x%
},\lambda, t)=\mathbf{0}$, where $\lambda_0$ and $\mathbf{x}_0$ are given in %
\eqref{eqlamx}. By Lemma \ref{lem2.3} and the implicit function theorem, we
know that the equation $H_{\rm H}(\mathbf{x},\lambda, t)=\mathbf{0}$ has a
solution curve with initial $\left(\mathbf{x}_0,\lambda_0,0\right)$, $%
\mathbf{w}(t)=(\mathbf{x}(t),\lambda(t),t)$ for $t\in [0,1)$ and $\mathbf{w}%
(0)=\left(\mathbf{x}_0,\lambda_0,0\right)$, which can be parameterized by $t$%
. Let
\begin{align}  \label{eq2.4}
C_{(\mathbf{x}_0,\lambda_0)}=\{(\mathbf{x}(t),\lambda(t),t)|t\in[0,1)\},
\text{ with }(\mathbf{x}(0),\lambda(0),0)=\left(\mathbf{x}%
_0,\lambda_0,0\right)
\end{align}
be the set of solution curve $\mathbf{w}(t)$ for $t\in[0,1)$. The following
theorem shows that $C_{(\mathbf{x}_0,\lambda_0)}$ is the solution set of $%
H_{\rm H}(\mathbf{x},\lambda, t)=\mathbf{0}$ in $\mathbb{R}^{n+1}_{\geqslant
0}\times [0,1)$.

\begin{Theorem}
\label{thm2.2} $C_{(\mathbf{x}_0,\lambda_0)}$ is the solution set of $H_{\rm H}(%
\mathbf{x},\lambda, t)=\mathbf{0}$ in $\mathbb{R}^{n+1}_{\geqslant 0}\times
[0,1)$.
\end{Theorem}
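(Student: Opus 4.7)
The plan is to prove the set equality by establishing both inclusions, with the reverse inclusion being the substantive one. The inclusion $C_{(\mathbf{x}_0,\lambda_0)} \subseteq \{(\mathbf{x},\lambda,t) \in \mathbb{R}^{n+1}_{\geqslant 0} \times [0,1) : H_{\rm H}(\mathbf{x},\lambda,t) = \mathbf{0}\}$ is immediate from the construction of the curve: every $\mathbf{w}(t)$ on it satisfies the equation, and it lies in the positive cone. The only thing to verify here is that the curve $\mathbf{w}(t)$ is actually defined on the whole interval $[0,1)$; this follows from Lemma \ref{lem2.3}, which gives an invertible Jacobian at every positive solution, so the implicit function theorem allows the curve to be continued as long as it remains in $\mathbb{R}^{n+1}_{>0}$ and stays bounded. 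Theorem \ref{thm2.1} prevents escape to $\partial(\mathbb{R}^{n+1}_{\geqslant 0})$, and the a priori spectral-radius bound (analogous to \eqref{equ_bound}) prevents blow-up in $\lambda$, while $\|\mathbf{x}(t)\|=1$ keeps $\mathbf{x}$ bounded.

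For the reverse inclusion, I would take any solution $(\mathbf{x}_*,\lambda_*,t_*) \in \mathbb{R}^{n+1}_{\geqslant 0} \times [0,1)$ of $H_{\rm H}=\mathbf{0}$ and show it lies on $C_{(\mathbf{x}_0,\lambda_0)}$. First, since $t_* \in [0,1)$ and $\mathcal{A}_0 > 0$, $\mathcal{A} \geqslant 0$, the convex combination $\mathcal{A}(t_*) = (1-t_*)\mathcal{A}_0 + t_*\mathcal{A}$ is strictly positive. Lemma \ref{lem2.1} then forces $(\mathbf{x}_*,\lambda_*) \in \mathbb{R}^{n+1}_{>0}$, so the solution actually lies in the open positive cone.

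The next step is to invoke the Perron--Frobenius theory recalled in Section 2. Since $\mathcal{A}(t_*) > 0$ entrywise, it is trivially weakly irreducible. By item~2 of the H-eigenpair list in Section 2, a weakly irreducible nonnegative tensor admits, up to positive scalar multiple, a unique nonnegative H-eigenvector, corresponding to the Perron eigenvalue. The normalization $\mathbf{x}^\top \mathbf{x}=1$ built into $H_{\rm H}$ then pins the positive H-eigenpair of $\mathcal{A}(t_*)$ down uniquely. But $(\mathbf{x}(t_*),\lambda(t_*))$, supplied by the curve, is another positive H-eigenpair of $\mathcal{A}(t_*)$ with the same normalization, so $(\mathbf{x}_*,\lambda_*) = (\mathbf{x}(t_*),\lambda(t_*))$ and hence $(\mathbf{x}_*,\lambda_*,t_*) \in C_{(\mathbf{x}_0,\lambda_0)}$.

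The main obstacle I anticipate is not in the uniqueness argument itself, which is a direct application of Perron--Frobenius, but in justifying that the continuation curve $\mathbf{w}(t)$ is defined on the entire half-open interval $[0,1)$ rather than only locally near $t=0$. The cleanest way I see to handle this is to combine the invertibility of $\mathscr{D}_{\mathbf{x},\lambda}H_{\rm H}$ from Lemma \ref{lem2.3} (which ensures local extendability at every positive solution) with the boundary and boundedness control provided by Theorem \ref{thm2.1} and the Perron bound; together these rule out the only two obstructions to global continuation on $[0,1)$.
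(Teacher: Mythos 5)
Your proposal is correct, but it proves the theorem by a genuinely different route than the paper. The paper argues by contradiction: assuming a solution $(\mathbf{x}_*,\lambda_*,t_*)$ off the curve, it uses Lemma \ref{lem2.3} and the implicit function theorem to continue that solution \emph{backwards} in $t$ down to $t=0$ (staying in the cone by Theorem \ref{thm2.1}), notes that the two continuation curves cannot intersect because of local uniqueness, and then derives a contradiction from Lemma \ref{lem2.2}$(ii)$: the rank-1 tensor $\mathcal{A}_0$ has only one normalized nonnegative H-eigenpair, namely $(\lambda_0,\mathbf{x}_0)$. You instead fix $t_*\in[0,1)$, observe $\mathcal{A}(t_*)>0$ is weakly irreducible, and invoke the Perron--Frobenius uniqueness statement recalled in Section~2 to conclude that the normalized nonnegative H-eigenpair of $\mathcal{A}(t_*)$ is unique, so any nonnegative solution at level $t_*$ must coincide with the curve point. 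Your route is shorter and actually proves the stronger fact that $H_{\rm H}(\cdot,\cdot,t)=\mathbf{0}$ has exactly one nonnegative solution for every fixed $t\in[0,1)$; what it costs is reliance on the cited Perron--Frobenius theorem for weakly irreducible tensors (the paper only needs the elementary, self-proved uniqueness for the rank-1 start tensor, a technique it can and does reuse in the Z-eigenpair setting where no per-$t$ uniqueness exists), and it shifts weight onto the global definedness of the curve on all of $[0,1)$. Your sketch of that step (Lemma \ref{lem2.3} for local extension, Theorem \ref{thm2.1} to exclude the boundary, $\|\mathbf{x}(t)\|=1$ plus boundedness of $\mathcal{A}(t)$ to bound $\lambda(t)$) names the right ingredients; to make it airtight one should argue at the supremal parameter $T<1$ of extendability, extract a convergent subsequence by boundedness, place the limit in the open cone by Theorem \ref{thm2.1}, and extend past $T$ by Lemma \ref{lem2.3} and the implicit function theorem --- with your per-$t$ uniqueness this identification of the limit is immediate, so the gap is only one of explicitness, not of substance.
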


\begin{proof}
Suppose that $(\mathbf{x}_*,\lambda_*,t_*)\in  \mathbb{R}^{n+1}_{\geqslant 0}\times [0,1)$ is a solution of $H_{\rm H}(\mathbf{x},\lambda, t)=\mathbf{0}$ and $(\mathbf{x}_*,\lambda_*,t_*)\neq (\mathbf{x}(t_*),\lambda(t_*),t_*)\in C_{(\mathbf{x}_0,\lambda_0)}$. By Lemma \ref{lem2.3} and the implicit function theorem, there is a solution curve,  $\mathbf{w}_*(t)=(\mathbf{x}_*(t),\lambda_*(t),t)$ for $t\in [0,t_*]$ with $\mathbf{w}_*(t_*)=(\mathbf{x}_*,\lambda_*,t_*)$, which is parameterized by $t$. Let
$C_{(\mathbf{x}_*,\lambda_*)}=\{(\mathbf{x}_*(t),\lambda_*(t),t)|t\in[0,t_*]\}$ be the set of the solution curve.
Then from Lemma \ref{lem2.3}, we have $C_{(\mathbf{x}_*,\lambda_*)}\bigcap C_{(\mathbf{x}_0,\lambda_0)}=\emptyset$. From Theorem \ref{thm2.1}, we obtain that $C_{(\mathbf{x}_*,\lambda_*)}\subseteq \mathbb{R}^{n+1}_{\geqslant 0}\times [0,1)$. Since $(\mathbf{x}_*(0),\lambda_*(0))$ satisfies $F^0_{\rm H}(\mathbf{x},\lambda)=H_{\rm H}(\mathbf{x},\lambda, 0)=\mathbf{0}$ and hence, $(\lambda_*(0),\mathbf{x}_*(0))\in \mathbb{R}^{n+1}_{\geqslant 0}$ is a H-eigenpair of  the rank-1 tensor $\mathcal{A}_0$. It follows from Lemma \ref{lem2.2} $(ii)$ that $(\lambda_*(0),\mathbf{x}_*(0))=(\lambda_0,\mathbf{x}_0)$,  where $\lambda_0$ and $\mathbf{x}_0$ are defined in \eqref{eqlamx}. This is a contradiction because $C_{(\mathbf{x}_*,\lambda_*)}\bigcap C_{(\mathbf{x}_0,\lambda_0)}=\emptyset$.
\end{proof}

Suppose that $(\mathbf{x}(t),\lambda(t),t)$ for $t\in [0,1)$ is the solution
curve of $H_{\rm H}(\mathbf{x},\lambda, t)=\mathbf{0}$. Since $\|\mathbf{x}(t)\|=1$
and $\mathcal{A}(t)$ in \eqref{eqAt} is bounded for $t\in [0,1)$, then $%
\|(\lambda(t),\mathbf{x}(t))\|$ is bounded for $t\in [0,1)$. Suppose that $(%
\mathbf{x}_*,\lambda_*,1)$ is an accumulation point of $C_{(\mathbf{x}%
_0,\lambda_0)}$. Then $\mathbf{x}_*\in \mathbb{R}^{n}_{\geqslant 0}$ with $\|%
\mathbf{x}_*\|=1$, $\lambda_*\geqslant 0$ and $F_{\rm H}(\mathbf{x}%
_*,\lambda_*)=H_{\rm H}(\mathbf{x}_*,\lambda_*,1)=\mathbf{0}$. Hence, $(\lambda_*,%
\mathbf{x}_*)$ is a H-eigenpair of $\mathcal{A}$. In \cite%
{Hu-Huang-Ling-Qi:2013}, the authors shown that the eigenvalues of tensor $%
\mathcal{A}$ are the roots of a nonzero polynomial, hence, the eigenvalues
of $\mathcal{A}$ are isolated and we have $\lim_{t\rightarrow
1^-}\lambda(t)=\lambda_*$. Let
\begin{align*}
\Gamma_{\lambda_*}=\{\mathbf{x}_*\in \mathbb{R}^{n}_{\geqslant 0}| (\mathbf{x%
}_*,\lambda_*,1)\text{ is an accumulation point of }C_{(\mathbf{x}%
_0,\lambda_0)}\}.
\end{align*}
Then $\Gamma_{\lambda_*}\neq \emptyset$ is connected. It is easily seen that
for each $\mathbf{x}_*\in \Gamma_{\lambda_*}$, $(\lambda_*,\mathbf{x}_*)$ is
a H-eigenpair of $\mathcal{A}$. The following theorem can be obtained
directly.

\begin{Theorem}
\label{thm2.3} Let $\mathbf{x}_*\in \Gamma_{\lambda_*}$. If $(\mathbf{x}%
_*,\lambda_*)$ is an isolated solution of $F_{\rm H}(\mathbf{x},\lambda)=\mathbf{0}
$ then $\Gamma_{\lambda_*}=\{\mathbf{x}_*\}$ and
$\lim_{t\rightarrow 1^-}(\mathbf{x}(t),\lambda(t))=(\mathbf{x}_*,\lambda_*),$
where $(\mathbf{x}(t),\lambda(t),t)$ for $t\in[0,1)$ is the solution curve
of $H_{\rm H}(\mathbf{x},\lambda, t)=\mathbf{0}$.
\end{Theorem}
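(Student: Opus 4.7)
The plan is to derive both conclusions ($\Gamma_{\lambda_*}=\{\mathbf{x}_*\}$ and the curve limit) from a clopen/connectedness argument in $\Gamma_{\lambda_*}$, exploiting two facts asserted just before the theorem: that $\Gamma_{\lambda_*}$ is nonempty and connected, and that $\lim_{t\to 1^-}\lambda(t)=\lambda_*$ (because eigenvalues of $\mathcal{A}$ are isolated roots of a nonzero polynomial, per \cite{Hu-Huang-Ling-Qi:2013}). The isolation hypothesis on $(\mathbf{x}_*,\lambda_*)$ is used only to rule out other points in $\Gamma_{\lambda_*}$ near $\mathbf{x}_*$.

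First I would show that $\{\mathbf{x}_*\}$ is relatively open in $\Gamma_{\lambda_*}$. Pick $\varepsilon>0$ such that
\begin{equation*}
\bigl(B_\varepsilon(\mathbf{x}_*)\times B_\varepsilon(\lambda_*)\bigr)\cap F_{\rm H}^{-1}(\mathbf{0})=\{(\mathbf{x}_*,\lambda_*)\},
\end{equation*}
which exists by isolation. Any $\mathbf{y}\in\Gamma_{\lambda_*}$ satisfies $F_{\rm H}(\mathbf{y},\lambda_*)=\mathbf{0}$: by definition there is a sequence $(\mathbf{x}(t_k),\lambda(t_k),t_k)\to(\mathbf{y},\lambda_*,1)$ with $H_{\rm H}(\mathbf{x}(t_k),\lambda(t_k),t_k)=\mathbf{0}$, and continuity of $H_{\rm H}$ yields $F_{\rm H}(\mathbf{y},\lambda_*)=H_{\rm H}(\mathbf{y},\lambda_*,1)=\mathbf{0}$. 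Hence $\mathbf{y}\in B_\varepsilon(\mathbf{x}_*)$ forces $\mathbf{y}=\mathbf{x}_*$, giving $\Gamma_{\lambda_*}\cap B_\varepsilon(\mathbf{x}_*)=\{\mathbf{x}_*\}$. Since singletons in $\mathbb{R}^n$ are closed, $\{\mathbf{x}_*\}$ is a nonempty clopen subset of the connected set $\Gamma_{\lambda_*}$, and therefore $\Gamma_{\lambda_*}=\{\mathbf{x}_*\}$.

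For the limit, $\mathbf{x}(t)$ is confined to the compact unit sphere $S^{n-1}\cap\mathbb{R}^n_{\geqslant 0}$ for all $t\in[0,1)$, and $\lambda(t)\to\lambda_*$ by the cited fact. By Bolzano--Weierstrass, every sequence $t_k\to 1^-$ has a subsequence along which $\mathbf{x}(t_k)\to\mathbf{y}$ for some $\mathbf{y}\in\Gamma_{\lambda_*}=\{\mathbf{x}_*\}$; uniqueness of the subsequential limit then gives $\lim_{t\to 1^-}(\mathbf{x}(t),\lambda(t))=(\mathbf{x}_*,\lambda_*)$. The only potential obstacle is verifying the two auxiliary facts used as black boxes, namely connectedness of $\Gamma_{\lambda_*}$ (which follows from $\Gamma_{\lambda_*}$ being the set of limit points of the connected continuous curve $t\mapsto\mathbf{x}(t)$ on $[0,1)$, an intersection of connected closures) and the identity $F_{\rm H}(\mathbf{y},\lambda_*)=\mathbf{0}$ for $\mathbf{y}\in\Gamma_{\lambda_*}$ just established by continuity. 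Once the clopen skeleton is in place, no real technical difficulty remains.
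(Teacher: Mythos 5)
Your proposal is correct and takes essentially the same route the paper intends: the paper states the theorem ``can be obtained directly'' from exactly the facts it records just beforehand (that $\Gamma_{\lambda_*}$ is nonempty and connected, that every $\mathbf{x}\in\Gamma_{\lambda_*}$ gives an H-eigenpair with eigenvalue $\lambda_*$, and that $\lim_{t\rightarrow 1^-}\lambda(t)=\lambda_*$ by isolation of the eigenvalues), and your argument is precisely that direct deduction with the details filled in. The clopen step (isolation of $(\mathbf{x}_*,\lambda_*)$ gives $\Gamma_{\lambda_*}\cap B_\varepsilon(\mathbf{x}_*)=\{\mathbf{x}_*\}$, hence $\Gamma_{\lambda_*}=\{\mathbf{x}_*\}$ by connectedness) and the compactness/subsequence step for $\lim_{t\rightarrow 1^-}(\mathbf{x}(t),\lambda(t))=(\mathbf{x}_*,\lambda_*)$ are exactly what the paper leaves implicit, so no substantive difference remains.
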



\begin{Theorem}
\label{thm2.4} Let $\mathcal{A}\geqslant 0$ be weakly irreducible, then the
nonnegative solution of $F_{\rm H}(\mathbf{x},\lambda)=\mathbf{0}$ is isolated and
hence $\lim_{t\rightarrow 1^-}(\mathbf{x}(t),\lambda(t))=(\mathbf{x}%
_*,\lambda_*)$.
\end{Theorem}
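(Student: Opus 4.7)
The plan is to combine the Perron-Frobenius theorem for weakly irreducible nonnegative tensors (recalled in the preliminaries) with the strict positivity of the Perron eigenvector, so as to promote uniqueness into topological isolation, and then to invoke Theorem~\ref{thm2.3}. Since $\mathcal{A}\geqslant 0$ is weakly irreducible, the preliminaries guarantee a nonnegative H-eigenpair $(\lambda_0,\mathbf{x}_0)$ with $\lambda_0>0$ and $\mathbf{x}_0>0$, and ensure that any eigenvalue of $\mathcal{A}$ admitting a nonnegative H-eigenvector equals $\lambda_0$, the associated eigenvector being unique up to a positive scalar. Enforcing the normalization $\|\mathbf{x}\|=1$ then pins down a unique nonnegative solution of $F_{\rm H}(\mathbf{x},\lambda)=\mathbf{0}$, namely $(\mathbf{x}_0,\lambda_0)\in\mathbb{R}^{n+1}_{>0}$.

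Next, I would show that this unique nonnegative solution is isolated as a real solution of $F_{\rm H}=\mathbf{0}$. Because $\mathbf{x}_0>0$ strictly, there exists an open ball $B\subseteq\mathbb{R}^{n+1}_{>0}$ around $(\mathbf{x}_0,\lambda_0)$. Any zero of $F_{\rm H}$ lying in $B$ is, in particular, a nonnegative unit-norm H-eigenpair of $\mathcal{A}$, and hence coincides with $(\mathbf{x}_0,\lambda_0)$ by the uniqueness statement above. Thus $(\mathbf{x}_0,\lambda_0)$ is isolated in $\mathbb{R}^{n+1}$, establishing the first assertion of the theorem.

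For the second assertion, recall from the paragraph preceding Theorem~\ref{thm2.3} that every accumulation point of the curve $C_{(\mathbf{x}_0,\lambda_0)}$ has the form $(\mathbf{x}_*,\lambda_*,1)$ with $\mathbf{x}_*\in\mathbb{R}^{n}_{\geqslant 0}$, $\|\mathbf{x}_*\|=1$, $\lambda_*\geqslant 0$, and $F_{\rm H}(\mathbf{x}_*,\lambda_*)=\mathbf{0}$; by the first step this forces $(\mathbf{x}_*,\lambda_*)=(\mathbf{x}_0,\lambda_0)$, which is isolated by the second step. Theorem~\ref{thm2.3} then immediately yields $\lim_{t\to 1^-}(\mathbf{x}(t),\lambda(t))=(\mathbf{x}_*,\lambda_*)$. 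The only subtle point in this plan is the passage from ``Perron uniqueness up to a positive scalar'' (which \emph{a priori} gives isolation only among nonnegative eigenpairs) to isolation in the full ambient space $\mathbb{R}^{n+1}$; the positivity $\mathbf{x}_0>0$ is exactly what removes this gap, so no further machinery (such as invertibility of the Jacobian via the $M$-matrix argument of Lemma~\ref{lem3.3}, as in Lemma~\ref{lem2.3}) is needed.
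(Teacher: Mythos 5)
Your proposal is correct, but it reaches isolation by a genuinely different route than the paper. The paper proves nondegeneracy: at an accumulation point $(\mathbf{x}_*,\lambda_*)$ it uses weak irreducibility to show that $A_1=\mathscr{D}_{\mathbf{x}}(\mathcal{A}\mathbf{x}^{m-1})|_{\mathbf{x}=\mathbf{x}_*}$ is irreducible, that $-(A_1-(m-1)\lambda_*[\![\mathbf{x}_*^{[m-2]}]\!])$ is an irreducible singular $M$-matrix, and then invokes Lemma~\ref{lem3.3} to conclude that the bordered Jacobian $\mathscr{D}_{\mathbf{x},\lambda}F_{\rm H}(\mathbf{x}_*,\lambda_*)$ is invertible, so the zero is isolated and Theorem~\ref{thm2.3} applies, exactly as in your last step. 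You instead obtain isolation from the uniqueness half of the Perron--Frobenius theorem for weakly irreducible nonnegative tensors (statement 2 in the preliminaries) together with the observation that strict positivity of the Perron pair places an entire ambient ball inside $\mathbb{R}^{n+1}_{>0}$, where every zero of $F_{\rm H}$ is a nonnegative unit eigenpair and hence coincides with the Perron pair; this is indeed the right way to upgrade ``uniqueness among nonnegative eigenpairs'' to isolation in $\mathbb{R}^{n+1}$, and it is all Theorem~\ref{thm2.3} needs, so your argument is complete. The trade-off: your route is shorter and avoids the $M$-matrix machinery, but it leans on the full uniqueness assertion of the cited Perron--Frobenius theorem and yields only isolation, whereas the paper's argument uses essentially only the positivity assertion plus linear algebra and delivers the stronger conclusion that $(\mathbf{x}_*,\lambda_*)$ is a nondegenerate zero of $F_{\rm H}$ --- the property that keeps the Newton corrector well conditioned as $t\rightarrow 1^-$, mirroring Lemma~\ref{lem2.3} along the curve. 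One notational caution: in the paper $(\mathbf{x}_0,\lambda_0)$ denotes the eigenpair of the rank-1 start tensor $\mathcal{A}_0$ and the curve $C_{(\mathbf{x}_0,\lambda_0)}$ starts there, so you should write the Perron pair of $\mathcal{A}$ as $(\mathbf{x}_*,\lambda_*)$ rather than reusing $(\mathbf{x}_0,\lambda_0)$.
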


\begin{proof}
Let $\mathbf{x}_*\in \Gamma_{\lambda_*}$. From Theorem \ref{thm2.3}, it suffices to show that $(\mathbf{x}_*,\lambda_*)$ is an isolated solution of $F_{\rm H}(\mathbf{x},\lambda)=\mathbf{0}$. The Jacobian matrix of $F_{\rm H}$ at $(\mathbf{x}_*,\lambda_*)$ is
\begin{align*}
\mathscr{D}_{\mathbf{x},\lambda}F_{\rm H}(\mathbf{x}_*,\lambda_*)=\left[\begin{array}{c|c}A_1-(m-1)\lambda_* [\![\mathbf{x}_*^{[m-2]}]\!]&-\mathbf{x}_*^{[m-1]}\\\hline
2\mathbf{x}_*^{\top}&0\end{array}\right]\in \mathbb{R}^{(n+1)\times (n+1)},
\end{align*}
where $A_1=\mathscr{D}_{\mathbf{x}}(\mathcal{A}\mathbf{x}^{m-1})|_{\mathbf{x}=\mathbf{x}_*} \in \mathbb{R}^{n\times n}_{\geqslant 0}$ is given in \eqref{derAt}. Since $\mathcal{A}$ is weakly irreducible, $\lambda_*>0$ and $\mathbf{x}_*>0$. We next show that $A_1$ is invertible before proving that $\mathscr{D}_{\mathbf{x},\lambda}F_{\rm H}(\mathbf{x}_*,\lambda_*)$ is invertible.

Suppose that $A_1=[A^1_{i,j}]$ is reducible, then there exists a nonempty proper subset $S\subset \{1,2,\cdots,n\}$ such that
 $A^1_{i,j}=0,\ \forall \ i\in S,\ \forall \ j \notin  S,$
 which implies
 \begin{eqnarray}\label{derAt2}
0 &=&A_{i,j}^{1}=\left( \sum_{k=2}^{m}\mathcal{A}\times _{2}\mathbf{x}_{\ast
}\cdots \times _{k-1}\mathbf{x}_{\ast }\times _{k+1}\mathbf{x}_{\ast }\cdots
\times _{m}\mathbf{x}_{\ast }\right) _{i,j} \nonumber  \\
&\geq&\left( \mathcal{A}\times _{2}\mathbf{x}_{\ast }\cdots \times _{j-1}%
\mathbf{x}_{\ast }\times _{j+1}\mathbf{x}_{\ast }\cdots \times _{m}\mathbf{x}%
_{\ast }\right) _{i,j}.
\end{eqnarray}
Since $\mathcal{A}$ is weakly irreducible, there exist $i\in S$ and $i_2,\ldots, i_m$ with at least one $j=i_q \notin S$ such that $\mathcal{A}_{i,i_2,\ldots,i_{q-1},j,i_{q+1}\ldots,i_m}\neq 0$. It follows from $\mathcal{A}\geqslant 0$, $\mathbf{x}_*>0$ and \eqref{derAt2} that
$(\mathcal{A}\times_2\mathbf{x}_*\cdots \times_{i_q-1}\mathbf{x}_*\times_{i_q+1}\mathbf{x}_*\cdots \times_m\mathbf{x}_*)_{i,j}> 0$
 and hence $A^1_{i,j}>0$, where $i\in S$ and $j \notin S$. This is a contradiction. So, $A_1\geqslant 0$ is irreducible.

 Using the fact that $(\lambda_*,\mathbf{x}_*)\in \mathbb{R}^{n+1}_{>0}$ is H-eigenpair of $\mathcal{A}$, we obtain that $(A_1-(m-1)\lambda_* [\!|\mathbf{x}_*^{[m-2]}|\!])\mathbf{x}_*=\mathbf{0}$, i.e., $-(A_1-(m-1)\lambda_* [\!|\mathbf{x}_*^{[m-2]}|\!])$ is irreducible singular $M$-matrix. It follows from Lemma \ref{lem3.3} that the Jacobian matrix $\mathscr{D}_{\mathbf{x},\lambda}F_{\rm H}(\mathbf{x}_*,\lambda_*)$ is invertible.
\end{proof}

Theorem \ref{thm2.4} shows that for each weakly irreducible nonnegative tensor $\mathcal{A}$, we can compute the unique positive H-eigenpair $(\lambda_*,
\mathbf{x}_*)$ of $\mathcal{A}$ by tracing the solution curve of $H_{\rm H}(%
\mathbf{x}, \lambda,t)=\mathbf{0}$ in \eqref{eq2.2} with initial $\left(\mathbf{x}_0,\lambda_0,0\right)$, where $\lambda_0$
and $\mathbf{x}_0$ are defined in \eqref{eqlamx}.

\section{Algorithms}

A continuation method usually follows the solution curves of $H(\mathbf{u}%
,t)=\mathbf{0}$ with prediction and correction steps, where $H:\mathbb{R}%
^{n+1}\rightarrow \mathbb{R}^n$ is a continuously differentiable function.
In this section, we propose homotopy continuation methods to compute the
nonnegative Z-eigenpair and H-eigenpair of a tensor $\mathcal{A}\in
\mathbb{R}_{\geqslant 0}^{[m,n]}$. In addition, if $\mathcal{A}$ is
irreducible, a novel continuation method is proposed to compute an odd number
of positive Z-eigenpairs of $\mathcal{A}$.


\subsection{Pseudo-arclength continuation method for computing an odd number of $%
Z$-eigenpairs of irreducible nonnegative tensors}

Given a nonnegative tensor $\mathcal{A}\in \mathbb{R}^{[m,n]}_{\geqslant 0}$%
. Let $\mathcal{A}_0=\mathbf{x}_1\circ\cdots\circ\mathbf{x}_1\in \mathbb{R}%
^{[m,n]}_{> 0}$, where $\mathbf{x}_1\in \mathbb{R}^n_{>0}$ is generic. Then $%
(\lambda_0,\mathbf{x}_0)=(\|\mathbf{x}_1\|^m,\frac{\mathbf{x}_1}{\|\mathbf{x}%
_1\|})$ is the positive Z-eigenpair of $\mathcal{A}_0$. Theorems \ref%
{thm2.6} and \ref{thm2.7} show that the solution curve of $H_{\rm Z}(%
\mathbf{x},\lambda,t)=\mathbf{0}$, $\mathbf{w}(s)=(\mathbf{x}%
(s),\lambda(s),t(s))$ for $s\in [0,s_{max})$, has no bifurcation and $%
t(s)\rightarrow 1^{-}$ as $s\rightarrow s_{max}^{-}$, where the function $%
H_{\rm Z}$ is defined in \eqref{eq2.7}.
This solution curve  may have turning points at some parameters $s\in [0,s_{max})$, it is natural to employ pseudo-arclength continuation method (see \cite{Keller:1987}) for
tracking the solution curve of $H_{\rm Z}(\mathbf{x},\lambda,t)=\mathbf{0}$ with
initial $(\mathbf{x}_0,\lambda_0,0)$. Denote $\mathbf{w}=(\mathbf{x}%
^{\top},\lambda,t)^{\top}\in \mathbb{R}^{n+2}$. Then $H_{\rm Z}(\mathbf{w})=H_{\rm Z}(%
\mathbf{x},\lambda,t)=\mathbf{0}$. Theorem \ref{thm2.8} $(iv)$ shows that if
the roots of $F_{\rm Z}(\mathbf{x},\lambda)=H_{\rm Z}(\mathbf{x},\lambda,1)=\mathbf{0}$
in $\mathbb{R}_{\geqslant 0}^{n+1}$ are isolated, then we can compute a
nonnegative Z-eigenpair of $\mathcal{A}$ by tracking the solution curve
with initial $\mathbf{w}_0=(\mathbf{x}_0,\lambda_0,0)$. To follow the
solution curve, we use the prediction-correction process. The prediction and
correction steps are described as follows.

{\leftmargini=7mm
\leftmarginii=6mm
\begin{itemize}
\item Prediction step: Suppose that $\mathbf{w}_{i}\in \mathbb{R}%
^{n+1}_{\geqslant 0}$ is a point lying (approximately) on a solution curve
of $H_{\rm Z}(\mathbf{w})=\mathbf{0}$. The Euler predictor
\begin{align*}
\mathbf{w}_{i+1,1}=\mathbf{w}_{i}+\Delta s_i \dot{\mathbf{w}}_{i}
\end{align*}
is used to predict a new point. Here, $\dot{\mathbf{w}}_{i}\in \mathbb{R}%
^{n+2}$ is the unit tangent vector of the solution curve of $H_{\rm Z}(\mathbf{w})=%
\mathbf{0}$ at $\mathbf{w}_{i}$ and $\Delta s_i >0$ is a suitable step
length. Let $\mathscr{D}_{\mathbf{w}}H_{\rm Z}(\mathbf{w}_i)\in \mathbb{R}%
^{(n+1)\times (n+2)}$ in \eqref{eq2.8a} be the Jacobian matrix of $H_{\rm Z}$ at $%
\mathbf{w}=\mathbf{w}_i$. The unit tangent vector $\dot{\mathbf{w}}_{i}$
should satisfy the linear system $\mathscr{D}_{\mathbf{w}}H_{\rm Z}(\mathbf{w}_i)%
\dot{\mathbf{w}}_{i}=\mathbf{0}$ and $\dot{\mathbf{w}}_{i}^{\top}\dot{%
\mathbf{w}}_{i-1}>0$ if $i\geqslant 1$. If $i=0$, we choose the unit tangent
vector $\dot{\mathbf{w}}_{0}$ such that the last component of $\dot{\mathbf{w%
}}_{0}$ is positive.

\item Correction step: Let $c_i=\dot{\mathbf{w}}_{i}^{\top}\mathbf{w}_{i+1,1}
$ be a constant. We use Newton's method to compute the approximate solution
of system
\begin{align*}
\left\{%
\begin{array}{l}
H_{\rm Z}(\mathbf{w})=\mathbf{0}, \\
\dot{\mathbf{w}}_{i}^{\top}\mathbf{w}-c_i=0,%
\end{array}
\right.
\end{align*}
with initial value $\mathbf{w}_{i+1,1}$. The iteration $\mathbf{w}%
_{i+1,\ell+1}=\mathbf{w}_{i+1,\ell}+\delta_{\ell}$ is computed for $%
\ell=1,2,\ldots$, where $\delta_{\ell}$ satisfies the linear system
\begin{align*}
\left[%
\begin{array}{c}
\mathscr{D}_{\mathbf{w}}H_{\rm Z}(\mathbf{w}_{i+1,\ell}) \\
\dot{\mathbf{w}}_{i}^{\top}%
\end{array}%
\right] \delta_{\ell}=-\left[%
\begin{array}{c}
H_{\rm Z}(\mathbf{w}_{i+1,\ell}) \\
\dot{\mathbf{w}}_{i}^{\top}\mathbf{w}_{i+1,\ell}-c_i%
\end{array}%
\right].
\end{align*}
If $\{\mathbf{w}_{i+1,\ell}\}$ converges until $\ell=\ell_{\infty}$, then we
accept $\mathbf{w}_{i+1}=\mathbf{w}_{i+1,\ell_{\infty}}$ as a new
approximation to the solution curve of $H_{\rm Z}(\mathbf{w})=\mathbf{0}$.
\end{itemize}}

Suppose that $\mathcal{A}\in \mathbb{R}^{[m,n]}_{\geqslant 0}$ is
irreducible and all solutions of $F_{\rm Z}(\mathbf{x},\lambda)=\mathbf{0}$ in $%
\mathbb{R}^{n+1}_{\geqslant 0}$ are isolated. Then Corollary \ref{cor2.11}
shows that the number of positive Z-eigenpairs of $\mathcal{A}$, counting
multiplicities, is odd. In the following, we propose a novel algorithm for
computing an odd number of positive Z-eigenpairs. The following theorem is
useful to construct the algorithm.

\begin{Theorem}
\label{thm2.11} Let $\mathcal{A}\in \mathbb{R}^{[m,n]}_{\geqslant 0}$ be
irreducible. Suppose that $\mathbf{0}$ is a regular value of $F_{\rm Z}:\mathbb{R}%
_{\geqslant 0}^{n+1}\rightarrow \mathbb{R}^{n+1}$ which is defined in %
\eqref{eqZ}. Let $\mathbf{x}_1,\mathbf{x}_2\in \mathbb{R}^n_{>0}$ be generic
and $H_{\rm Z,1}(\mathbf{x},\lambda,t)$ and $H_{\rm Z,2}(\mathbf{x},\lambda,t)$ be the
homotopy functions constructed in \eqref{eq2.7} with $\mathcal{A}_{0,1}=%
\mathbf{x}_1\circ\cdots\circ\mathbf{x}_1$ and $\mathcal{A}_{0,2}=\mathbf{x}%
_2\circ\cdots\circ\mathbf{x}_2$, respectively. Assume that $(\mathbf{x}_{*,1},\lambda_{*,1})$ and $(\mathbf{x}_{*,2},\lambda_{*,2})$ are accumulation
points of solution curves of $H_{\rm Z,1}(\mathbf{x},\lambda,t)=\mathbf{0}$ and $%
H_{\rm Z,2}(\mathbf{x},\lambda,t)=\mathbf{0}$, respectively. If $(\mathbf{x}%
_{*,1},\lambda_{*,1})\neq(\mathbf{x}_{*,2},\lambda_{*,2})$, then

\begin{itemize}
\item[(i)] $\mathrm{Sgn}(\mathrm{det }(\mathscr{D}_{\mathbf{x},\lambda}F_{\rm Z}(%
\mathbf{x}_{*,1},\lambda_{*,1})))=\mathrm{Sgn}(\mathrm{det }(\mathscr{D}_{%
\mathbf{x},\lambda}F_{\rm Z}(\mathbf{x}_{*,2},\lambda_{*,2})))=(-1)^{n-1}$;

\item[(ii)] there exists a smooth solution curve of $H_{\rm Z,1}(\mathbf{x}%
,\lambda,t)=\mathbf{0}$,
\begin{align*}
\hat{\mathbf{w}}(s)\equiv (\hat{\mathbf{x}}(s),\hat{\lambda}(s),\hat{t}%
(s))\in \mathbb{R}_{> 0}^{n+1}\times[0,1]\ \text{ for }\ s\in [0,\hat{s%
}_{max})
\end{align*}
with initial $\hat{\mathbf{w}}(0)=(\mathbf{x}_{*,2},\lambda_{*,2},1)$, where $\hat{s}_{max}$ is the largest arc-length such that $\hat{\mathbf{w}}%
(s)\in\mathbb{R}_{> 0}^{n+1}\times[0,1)$;

\item[(iii)] $\lim_{s\rightarrow \hat{s}_{max}^-}\hat{t}(s)=1$;

\item[(iv)] $\lim_{s\rightarrow \hat{s}_{max}^-}(\hat{\mathbf{x}}(s),\hat{%
\lambda}(s))=(\hat{\mathbf{x}}_*,\hat{\lambda}_*)\in \mathbb{R}_{> 0}^{n+1}$%
. Then $(\hat{\lambda}_*,\hat{\mathbf{x}}_*)$ is a Z-eigenpair of $%
\mathcal{A}$ and $(\hat{\mathbf{x}}_*,\hat{\lambda}_*)\neq (\mathbf{x}%
_*^i,\lambda_*^i)$, for $i=1,2$. In fact, $\mathrm{Sgn}(\mathrm{det }(%
\mathscr{D}_{\mathbf{x},\lambda}F_{\rm Z}(\hat{\mathbf{x}}_*,\hat{\lambda}%
_*)))=(-1)^n$.
\end{itemize}
\end{Theorem}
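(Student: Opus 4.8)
The plan is to establish the four items roughly in the order $(i)\Rightarrow(ii)\Rightarrow(iii)\Rightarrow(iv)$, since each later claim feeds on the structure exposed in the previous one. For $(i)$, I would combine the degree computation of Theorem \ref{thm2.10}, which gives $\mathrm{deg}(F_{\rm Z},\mathbb{R}^{n+1}_{>0},\mathbf{0})=(-1)^{n-1}$, with the observation that homotopy-continuation curves carry a \emph{sign}: the tangent to the curve $\mathbf{w}(s)=(\mathbf{x}(s),\lambda(s),t(s))$ of $H_{{\rm Z},j}=\mathbf{0}$ lies in the kernel of the full Jacobian $[\mathscr{D}_{\mathbf{x},\lambda}H_{{\rm Z},j}\mid \mathscr{D}_t H_{{\rm Z},j}]$, and since the last component of the tangent goes to $+1$ as $s\to s_{\max}^-$ (because $t(s)\to 1$ by Theorem \ref{thm2.7}), the sign of $\det\mathscr{D}_{\mathbf{x},\lambda}H_{{\rm Z},j}$ along the curve is determined by a fixed orientation convention inherited from $t=0$. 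At $t=0$ that sign is $(-1)^{n-1}$ by the explicit Jacobian computation in Lemma \ref{lem3.9}. By the standard curve-sign lemma for regular homotopies (the curve does not pass through singular points of $\mathscr{D}_{\mathbf{x},\lambda}H_{{\rm Z},j}$ except at isolated turning points of $t$, and crossing a turning point flips both the sign of $\det\mathscr{D}_{\mathbf{x},\lambda}H_{{\rm Z},j}$ and the sign of $\dot t$), the endpoint satisfies $\mathrm{Sgn}(\det\mathscr{D}_{\mathbf{x},\lambda}F_{\rm Z}(\mathbf{x}_{*,j},\lambda_{*,j}))=(-1)^{n-1}$ for $j=1,2$; this is exactly $(i)$.

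For $(ii)$, I would start the new homotopy $H_{{\rm Z},1}$ at the \emph{known} solution $(\mathbf{x}_{*,2},\lambda_{*,2},1)$ of $F_{\rm Z}=H_{{\rm Z},1}(\cdot,\cdot,1)=\mathbf{0}$. Because $\mathbf{0}$ is a regular value of $F_{\rm Z}$ by hypothesis and a regular value of $H_{{\rm Z},1}$ on $\mathbb{R}^{n+1}_{>0}\times[0,1)$ by Theorem \ref{thm2.6}, the implicit function theorem produces a smooth curve $\hat{\mathbf{w}}(s)$ emanating from $(\mathbf{x}_{*,2},\lambda_{*,2},1)$; since at $t=1$ the Jacobian $\mathscr{D}_{\mathbf{x},\lambda}F_{\rm Z}(\mathbf{x}_{*,2},\lambda_{*,2})$ is nonsingular (by $(i)$, its determinant has sign $(-1)^{n-1}\ne 0$), the curve can be taken to enter $t<1$, i.e.\ $\hat{t}(0)=1$ with $\hat t$ strictly decreasing initially. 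Theorem \ref{thm2.1} and Lemma \ref{lem2.1} keep the curve in $\mathbb{R}^{n+1}_{>0}\times[0,1]$ as long as $t<1$ (positivity of $\mathcal{A}(t)$ for $t<1$), which justifies the definition of $\hat s_{\max}$. For $(iii)$, the argument is the mirror image of Theorem \ref{thm2.7}: an accumulation point with $\hat t_*\in(0,1)$ would contradict the maximality of $\hat s_{\max}$ via the implicit function theorem (Theorem \ref{thm2.6}); an accumulation point at $\hat t_*=0$ would, by Lemma \ref{lem2.2}, force $(\hat{\mathbf{x}}_*,\hat\lambda_*)=(\mathbf{x}_0,\lambda_0)$ where $\mathscr{D}_{\mathbf{x},\lambda}H_{{\rm Z},1}$ is invertible, again contradicting that the curve must leave any neighborhood of that point; so the only option is $\hat t(s)\to 1$.

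For $(iv)$, boundedness of $(\hat{\mathbf{x}}(s),\hat\lambda(s))$ from \eqref{equ_bound} and $\|\hat{\mathbf{x}}(s)\|=1$ yields an accumulation point $(\hat{\mathbf{x}}_*,\hat\lambda_*)\in\mathbb{R}^{n+1}_{\geqslant 0}$; Theorem \ref{thm2.1} upgrades this to $\mathbb{R}^{n+1}_{>0}$, and passing to the limit in $H_{{\rm Z},1}(\hat{\mathbf{x}}(s),\hat\lambda(s),\hat t(s))=\mathbf{0}$ with $\hat t\to 1$ gives $F_{\rm Z}(\hat{\mathbf{x}}_*,\hat\lambda_*)=\mathbf{0}$, so $(\hat\lambda_*,\hat{\mathbf{x}}_*)$ is a Z-eigenpair of $\mathcal{A}$. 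The key sign statement follows from the same curve-sign lemma applied along $\hat{\mathbf{w}}$: the curve starts at $t=1$ with tangent pointing toward decreasing $t$ (so $\dot{\hat t}(0)<0$) and ends with $\dot{\hat t}\to$ a value compatible with $\hat t\to 1^-$, i.e.\ $\dot{\hat t}>0$ near $\hat s_{\max}$; hence the curve crosses an \emph{odd} number of turning points (net change of sign of $\dot{\hat t}$), so $\det\mathscr{D}_{\mathbf{x},\lambda}F_{\rm Z}$ flips sign an odd number of times between the two endpoints, giving $\mathrm{Sgn}(\det\mathscr{D}_{\mathbf{x},\lambda}F_{\rm Z}(\hat{\mathbf{x}}_*,\hat\lambda_*))=-(-1)^{n-1}=(-1)^n$. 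Since this sign differs from that of $(\mathbf{x}_{*,i},\lambda_{*,i})$ in $(i)$, we get $(\hat{\mathbf{x}}_*,\hat\lambda_*)\ne(\mathbf{x}_{*,i},\lambda_{*,i})$ for $i=1,2$. The main obstacle I anticipate is making the curve-sign bookkeeping fully rigorous — precisely, showing that between consecutive turning points of $\hat t$ the determinant $\det\mathscr{D}_{\mathbf{x},\lambda}H_{{\rm Z},1}$ does not vanish (so its sign is locally constant) and that at each turning point it changes sign exactly once, which requires Proposition \ref{prop2.0} together with a careful local analysis of the one-dimensional curve (a standard Morse-type argument relating $\dot t=0$ to singularity of the submatrix), and then matching the parity of turning-point crossings to the two prescribed endpoint orientations.
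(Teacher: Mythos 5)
Your overall strategy --- orientation/degree bookkeeping along the solution paths, which is exactly how the paper argues parts $(i)$, $(ii)$ and $(iv)$ --- is sound, and your explicit ``curve--sign lemma'' (the sign of $\det \mathscr{D}_{\mathbf{x},\lambda}H_{\rm Z}$ and the sign of $\dot t$ flip together at turning points, cf.\ Proposition \ref{prop2.0}) is a legitimate, if anything more detailed, version of the paper's appeal to homotopy invariance of degree. The problem is in part $(iii)$, and it is a genuine gap: you never use the hypothesis $(\mathbf{x}_{*,1},\lambda_{*,1})\neq(\mathbf{x}_{*,2},\lambda_{*,2})$ anywhere, yet the statement is false without it (if the two forward homotopies land on the same eigenpair, the reversed curve from $(\mathbf{x}_{*,2},\lambda_{*,2},1)$ simply retraces the first curve back to $t=0$, and $\hat t(s)\to 0$, not $1$).

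Concretely, your exclusion of the case $\hat t_*=0$ does not work. You transplant Case 2 of Theorem \ref{thm2.7}, claiming that invertibility of $\mathscr{D}_{\mathbf{x},\lambda}H_{\rm Z,1}(\mathbf{x}_0,\lambda_0,0)$ ``contradicts that the curve must leave any neighborhood of that point.'' But in Theorem \ref{thm2.7} the contradiction arises only because the curve \emph{starts} at $(\mathbf{x}_0,\lambda_0,0)$ and, by local uniqueness, cannot return to its own initial segment. The reversed curve $\hat{\mathbf{w}}$ starts at $(\mathbf{x}_{*,2},\lambda_{*,2},1)$, and terminating at the boundary point $(\mathbf{x}_0,\lambda_0,0)$ (where $t=0$) is perfectly consistent with the implicit function theorem: locally the solution set there is a single arc parameterized by $t\in[0,\epsilon)$, and a curve is allowed to end on the boundary $t=0$. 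The correct argument is global, and this is where the hypothesis enters: since $\mathbf{0}$ is a regular value, the solution set in $\mathbb{R}^{n+1}_{>0}\times[0,1]$ is a one-manifold, and the unique positive solution of $H_{\rm Z,1}(\cdot,\cdot,0)=\mathbf{0}$ is $(\mathbf{x}_0,\lambda_0)=(\mathbf{x}_1/\|\mathbf{x}_1\|,\|\mathbf{x}_1\|^m)$, which is already the initial point of the first curve $\mathbf{w}_1(s)$, whose other endpoint is $(\mathbf{x}_{*,1},\lambda_{*,1},1)$. If $\hat{\mathbf{w}}$ accumulated at $(\mathbf{x}_0,\lambda_0,0)$, it would coincide with that connected component, forcing its starting point $(\mathbf{x}_{*,2},\lambda_{*,2},1)$ to equal $(\mathbf{x}_{*,1},\lambda_{*,1},1)$ --- the contradiction supplied precisely by the assumption $(\mathbf{x}_{*,1},\lambda_{*,1})\neq(\mathbf{x}_{*,2},\lambda_{*,2})$. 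This is the step the paper makes explicitly (``$\hat t(s)$ does not converge to $0$ because $H_{\rm Z,1}(\cdot,\cdot,0)=\mathbf{0}$ has only one positive solution and $(\mathbf{x}_{*,1},\lambda_{*,1},1)$ is the accumulation point of $\mathbf{w}_1$''), and your proof needs it; once inserted, the remainder of your argument goes through.
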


\begin{proof}
$(i)$ From the definitions of $H_{\rm Z,1}$ and $H_{\rm Z,2}$, we have $H_{\rm Z,1}(\mathbf{x},\lambda,0)=F^0_{\rm Z, 1}(\mathbf{x},\lambda)$, $H_{\rm Z,2}(\mathbf{x},\lambda,0)=F^0_{\rm Z, 2}(\mathbf{x},\lambda)$ and $H_{\rm Z,1}(\mathbf{x},\lambda,1)=H_{\rm Z,2}(\mathbf{x},\lambda,1)=F_{\rm Z}(\mathbf{x},\lambda)$, where $F^0_{\rm Z, 1}$ and $F^0_{\rm Z, 2}$ are of the form in \eqref{eqZ} with $\mathcal{A}_0=\mathcal{A}_{0,1}$ and $\mathcal{A}_0=\mathcal{A}_{0,2}$, respectively. For $i=1,2$, let $\mathbf{w}_i(s)\equiv (\mathbf{x}_i(s),\lambda_i(s),t_i(s))\in \mathbb{R}_{> 0}^{n+1}\times[0,1)$ for $s\in [0,s_{i,max})$ be the solution curve of $H_{{\rm Z},i}(\mathbf{x},\lambda,t)=\mathbf{0}$ with initial $\mathbf{w}_i(0)=(\mathbf{x}_i/\|\mathbf{x}_i\|, \|\mathbf{x}_i\|^m,0)$. Here $s_{i,max}$  is the largest arc-length such that $\mathbf{w}_i(s)\in\mathbb{R}_{> 0}^{n+1}\times[0,1)$.
Since $\mathbf{0}$ is a regular value of  $F_{\rm Z}$, from Theorem \ref{thm2.8} (\emph{iv}), we have
$\lim_{s\rightarrow s_{1,max}^-}\mathbf{w}_1(s)=(\mathbf{x}_{*,1},\lambda_{*,1},1)$ and $\lim_{s\rightarrow s_{2,max}^-}\mathbf{w}_2(s)=(\mathbf{x}_{*,2},\lambda_{*,2},1).$
By the Homotopy Invariance of Degree theorem (Theorem \ref{thm2.9}) and Lemma \ref{lem3.9}, we obtain that
\begin{align*}
&{\rm Sgn}({\rm det }(\mathscr{D}_{\mathbf{x},\lambda}F_{\rm Z}(\mathbf{x}_{*,1},\lambda_{*,1})))={\rm Sgn}({\rm det }(\mathscr{D}_{\mathbf{x},\lambda}F^0_{\rm Z, 1}(\mathbf{x}_1/\|\mathbf{x}_1\|, \|\mathbf{x}_1\|^m)))=(-1)^{n-1}, \text{ and }\\
&{\rm Sgn}({\rm det }(\mathscr{D}_{\mathbf{x},\lambda}F_{\rm Z}(\mathbf{x}_{*,2},\lambda_{*,2})))={\rm Sgn}({\rm det }(\mathscr{D}_{\mathbf{x},\lambda}F^0_{\rm Z, 2}(\mathbf{x}_2/\|\mathbf{x}_2\|, \|\mathbf{x}_2\|^m)))=(-1)^{n-1}.
\end{align*}

$(ii)$ Since $\mathbf{0}$ is a regular value of $F_{\rm Z}$ and $(\mathbf{x}_{*,2},\lambda_{*,2})$ is a solution of $F_{\rm Z}(\mathbf{x},\lambda)=\mathbf{0}$, $\mathscr{D}_{\mathbf{x},\lambda}F_{\rm Z}(\mathbf{x}_{*,2},\lambda_{*,2})$ is invertible. From Theorem \ref{thm2.6}, this assertion can be obtained.

$(iii)$ Since the equation $H_{\rm Z,1}(\mathbf{x},\lambda,0)=F^0_{\rm Z, 1}(\mathbf{x},\lambda)=\mathbf{0}$ has only one solution $(\mathbf{x}_0,\lambda_0)=(\mathbf{x}_1/\|\mathbf{x}_1\|,\|\mathbf{x}_1\|^m)$ in $\mathbb{R}^{n+1}_{>0}$ and $(\mathbf{x}_{*,1},\lambda_{*,1},1)$ is the accumulation point of the set $\{\mathbf{w}_1(s)\ |\ s\in [0,s_{1,max}) \}$, we obtain that $\hat{t}(s)$ does not converge to $0$ as $s\rightarrow \hat{s}_{max}^-$. The proof of $\lim_{s\rightarrow \hat{s}_{max}^-}\hat{t}(s)=1$ is similar to the proof of Theorem \ref{thm2.7}.

$(iv)$ Since $\mathbf{0}$ is a regular value of  $F_{\rm Z}$, all solutions of $F_{\rm Z}(\mathbf{x},\lambda)=\mathbf{0}$ in  $\mathbb{R}^{n+1}_{\geqslant 0}$ are isolated. It follows from Theorem \ref{thm2.8} $(iv)$ that $\lim_{s\rightarrow \hat{s}_{max}^-}(\hat{\mathbf{x}}(s),\hat{\lambda}(s))=(\hat{\mathbf{x}}_*,\hat{\lambda}_*)$. It is easily seen that $(\hat{\mathbf{x}}_*,\hat{\lambda}_*)$ is a Z-eigenpair of $\mathcal{A}$ and $(\hat{\mathbf{x}}_*,\hat{\lambda}_*)\in \mathbb{R}_{> 0}^{n+1}$ because  $\mathcal{A}$ be irreducible. Since  $\mathbf{0}$ is a regular value of  $F_{\rm Z}$, we have $(\hat{\mathbf{x}}_*,\hat{\lambda}_*)\neq (\mathbf{x}_{*,1},\lambda_{*,1})$ and $(\hat{\mathbf{x}}_*,\hat{\lambda}_*)\neq (\mathbf{x}_{*,2},\lambda_{*,2})$. Using the Homotopy Invariance of Degree theorem (Theorem \ref{thm2.9}), we have
${\rm Sgn}({\rm det }(\mathscr{D}_{\mathbf{x},\lambda}F_{\rm Z}(\hat{\mathbf{x}}_*,\hat{\lambda}_*)))=-{\rm Sgn}({\rm det }(\mathscr{D}_{\mathbf{x},\lambda}F_{\rm Z}(\mathbf{x}_{*,2},\lambda_{*,2})))=(-1)^n.$
\end{proof}

Now, we can develop an algorithm for computing an odd number of positive $Z
$-eigenpairs of an irreducible nonnegative tensor $\mathcal{A}$. The
flowchart of this algorithm is shown in Figure \ref{fig1}.

\begin{figure}[htb]
\centering \resizebox{5in}{!}{\includegraphics{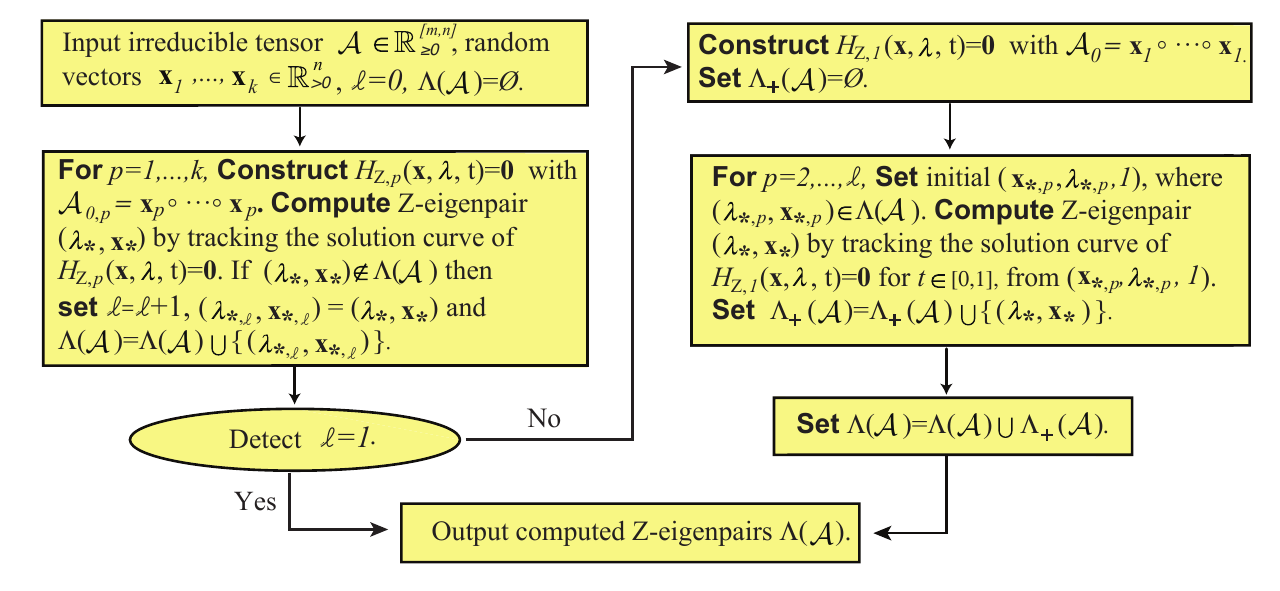}}
\caption{The flowchart of the pseudo-arclength continuation method for
computing an odd number of positive Z-eigenpairs of an irreducible nonnegative tensor $\mathcal{A}$.}
\label{fig1}
\end{figure}

\begin{Remark}
\label{rem4.2} When $\mathcal{A}\in \mathbb{R}_{\geqslant 0}^{[m,n]}$ is
irreducible, if all solutions in $\mathbb{R}_{\geqslant 0}^{n+1}$ of $F_{\rm Z}(%
\mathbf{x}, \lambda)=\mathbf{0}$ are isolated, then the algorithm shown in Figure \ref{fig1} is
guaranteed to compute an odd number of positive Z-eigenpairs, counting
multiplicities. In addition, if $\mathbf{0}\in \mathbb{R}^{n+1}$ is a regular value of $F_{\rm Z}$, then those positive Z-eigenpairs are distinct.
\end{Remark}

\subsection{Parameter continuation method for computing H-eigenpair of nonnegative tensors}

Given a nonnegative tensor $\mathcal{A}\in \mathbb{R}^{[m,n]}_{\geqslant 0}$%
. Let $\mathcal{A}_0\in \mathbb{R}_{>0}^{[m,n]}$ in \eqref{eq2.1} be a
rank-1 tensor and let $(\lambda_0,\mathbf{x}_0)$ in \eqref{eqlamx} be the
positive H-eigenpair of $\mathcal{A}_0$. Theorem \ref{thm2.2} shows that $%
C_{(\mathbf{x}_0,\lambda_0)}$ defined in \eqref{eq2.4} is the solution set
of the homotopy $H_{\rm H}(\mathbf{x},\lambda,t)=\mathbf{0}$ in $\mathbb{R}%
^{n+1}_{\geqslant 0}\times [0,1)$, where the function $H_{\rm H}$ is defined in %
\eqref{eq2.2}. Here, the solution set $C_{(\mathbf{x}_0,\lambda_0)}$ can be
parameterized by $t\in [0,1)$. In addition, Theorem \ref{thm2.3} shows that if $F_{\rm H}(\mathbf{x},\lambda)=\mathbf{0}$ has only isolated solution in $\mathbb{R}_{\geqslant 0}^{n+1}$, then a nonnegative H-eigenpair of $\mathcal{A}$ can be computed by tracking the curve $C_{(\mathbf{x}_0,\lambda_0)}$. It is natural to employ parameter
continuation method for tracking the solution curve of $H_{\rm H}(\mathbf{x}%
,\lambda,t)=\mathbf{0}$ with initial $(\mathbf{x}_0,\lambda_0,0)$. Denote $%
\mathbf{u}=(\mathbf{x}^{\top},\lambda)^{\top}\in \mathbb{R}^{n+1}$. Then $%
H_{\rm H}(\mathbf{u},t)\equiv H_{\rm H}(\mathbf{x},\lambda,t)=\mathbf{0}$. Parameter
continuation method (see \cite{Keller:1987}) takes a prediction-correction
approach. The prediction and correction steps are described as follows.

{\leftmargini=7mm
\leftmarginii=6mm
\begin{itemize}
\item Prediction step: Suppose that $(\mathbf{u}_{i}, t_i)$ is a point lying
(approximately) on a solution curve of $H_{\rm H}(\mathbf{u},t)=\mathbf{0}$. The
Euler predictor
$\mathbf{u}_{i+1,1}=\mathbf{u}_{i}+\Delta t_i \dot{\mathbf{u}}_{i}$
is used to predict a new point.
Here, $\Delta t_i >0$ is a suitable step length satisfying $%
t_{i+1}=t_i+\Delta t_i \leqslant 1$ and $\dot{\mathbf{u}}_{i}$ satisfies the
linear system $\mathscr{D}_{\mathbf{u}}H_{\rm H}(\mathbf{u}_i,t_i)\dot{\mathbf{u}}%
_{i}=-\mathscr{D}_{t}H_{\rm H}(\mathbf{u}_i,t_i)$.

\item Correction step: Let $t=t_{i+1}$ be fixed. We use Newton's method to
compute the approximate solution of $H_{\rm H}(\mathbf{u},t_{i+1})=\mathbf{0}$
with initial value $\mathbf{u}_{i+1,1}$. The iteration $\mathbf{u}%
_{i+1,\ell+1}=\mathbf{u}_{i+1,\ell}+\delta_{\ell}$ is computed for $%
\ell=1,2,\ldots$, where $\delta_{\ell}$ satisfies the linear system $%
\mathscr{D}_{\mathbf{u}}H_{\rm H}(\mathbf{u}_{i+1,\ell},t_{i+1})\delta_{\ell}=-H_{\rm H}(%
\mathbf{u}_{i+1,\ell},t_{i+1})$. If $\{\mathbf{u}_{i+1,\ell}\}$ converges
until $\ell=\ell_{\infty}$, then we set $\mathbf{u}_{i+1}=\mathbf{u}%
_{i+1,\ell_{\infty}}$ and accept $(\mathbf{u}_{i+1},t_{i+1})$ as a new
approximation to the solution curve of $H_{\rm H}(\mathbf{u},t)=\mathbf{0}$.
\end{itemize}}

\begin{Remark}
\label{rem4.3} If $\mathcal{A}\geqslant 0$ is weakly irreducible, then
Theorem \ref{thm2.4} shows that we can compute the unique positive H-eigenpair, $(\lambda_*,\mathbf{x}_*)$, of $\mathcal{A}$ by tracking the
solution curve. Note that the positive H-eigenvalue $\lambda_*$ is the
largest H-eigenvalue of $\mathcal{A}$.
\end{Remark}

\subsection{ Comparison to other methods}

In this section, we compare the numerical schemes, SS-HOPM (for Z%
-eigenpair) and NQZ, NNI (for H-eigenpair), with continuation method. The
main difference is that those three schemes are iteration methods. For the
computational complexity, SS-HOPM, NQZ and NNI require one evaluation of a
vector in the form $\mathcal{A}\mathbf{x}^{m-1}$ for each iteration. Thus,
the computational complexity for each iteration is $O(n^m)$.

Continuation methods are guaranteed to compute nonnegative Z-eigenpair and
H-eigenpairs of a tensor $\mathcal{A}\in \mathbb{R}_{\geqslant 0}^{[m,n]}$%
. In prediction and correction steps of continuation method, we need
evaluate the Jacobian matrix and residual of homotopy function, which is
the dominant computational complexity in continuation method. With the help
of rank-1 tensor $\mathcal{A}_0$ in \eqref{eq2.1}, we can compute $\mathcal{A%
}(t_*)\mathbf{x}_*^{m-1}$ as follows:
\begin{align}
\mathcal{A}(t_*)\mathbf{x}_*^{m-1}=(1-t_*)\left(\prod_{k=2}^m(\mathbf{x}_k^{\top}\mathbf{x}%
_*)\right)\mathbf{x}_1+t_*\mathcal{A}\mathbf{x}_*^{m-1}.  \label{eq4.1}
\end{align}
The computational complexities of $\mathcal{A}(t_*)\mathbf{x}_*^{m-1}$ and
of $\mathcal{A}\mathbf{x}_*^{m-1}$ are almost the same, which are $O(n^m)$.
The Jacobian matrix $A_{t_*}\equiv \mathscr{D}_{\mathbf{x}}(\mathcal{A}(t_*)\mathbf{x}%
^{m-1})|_{\mathbf{x} =\mathbf{x}_*}$ requires to compute matrices $\mathcal{A}%
\times_2\mathbf{x}_*\cdots \times_{k-1}\mathbf{x}_*\times_{k+1}\mathbf{x}%
_*\cdots \times_m\mathbf{x}_*$ for $k=2,\ldots,m$. If the tensor $\mathcal{A}
$ is semi-symmetric,\footnote{$\mathcal{A}\in \mathbb{R}^{[m,n]}$ is called
semi-symmetric if $\mathcal{A}_{i,j_2,\cdots, j_m}=\mathcal{A}%
_{i,i_2,\cdots, i_m}$, where $1\leqslant i_1\leqslant n$, $j_2,\cdots, j_m$
is any permutation of $i_2,\cdots, i_m$, $1\leqslant i_2,\cdots,
i_m\leqslant n$.} then we have $\mathscr{D}_{\mathbf{x}}(\mathcal{A}\mathbf{x%
}^{m-1})|_{\mathbf{x}=\mathbf{x}_*}=(m-1)\mathcal{A}\mathbf{x}_*^{m-2}$, which is a
precursor of $\mathcal{A}\mathbf{x}_*^{m-1}$. Note that a symmetric tensor%
\footnote{$\mathcal{A}\in \mathbb{R}^{[m,n]}$ is called symmetric if $%
\mathcal{A}_{j_1,j_2,\cdots, j_m}=\mathcal{A}_{i_1,i_2,\cdots, i_m}$, where $%
j_1,j_2,\cdots, j_m$ is any permutation of $i_1, i_2,\cdots, i_m$, for $%
1\leqslant i_1, i_2,\cdots, i_m\leqslant n$.} is semi-symmetric. When $%
\mathcal{A}$ is semi-symmetric, we may choose the rank-1 tensor $\mathcal{A}%
_0=\mathbf{x}_1\circ\cdots\circ\mathbf{x}_1\in \mathbb{R}^{[m,n]}_{>0}$ as a
symmetric tensor. Then the computational complexity of each prediction step
or each iteration of Newton's method in correction step of continuation
method is $O(n^m)$ by using the formula \eqref{eq4.1}. When $\mathcal{A}\in
\mathbb{R}_{\geqslant 0}^{[m,n]}$ is not a semi-symmetric, Ni and Qi \cite%
{Ni-Qi:2015} shown that there exists a semi-symmetric $\mathcal{A}_s\in
\mathbb{R}_{\geqslant 0}^{[m,n]}$ such that $\mathcal{A}\mathbf{x}^{m-1}=%
\mathcal{A}_s\mathbf{x}^{m-1}$ for each $\mathbf{x}\in \mathbb{R}^{n}$. The
computational complexity of constructing the semi-symmetric $\mathcal{A}%
_s\in \mathbb{R}_{\geqslant 0}^{[m,n]}$ is $O(n^m)$. Hence, it is more
efficient if we replace the tensor $\mathcal{A} $ to a semi-symmetric $%
\mathcal{A}_s\in \mathbb{R}_{\geqslant 0}^{[m,n]}$ before employing
continuation method.

In the following, we itemized the sufficient conditions for the convergence
of numerical schemes, SS-HOPM, NQZ, NNI and continuation method.

{\leftmargini=7mm
\leftmarginii=6mm
\begin{itemize}
\item For computing Z-eigenpairs of a tensor $\mathcal{A}\in \mathbb{R}^{[m,n]}$:

\begin{itemize}
\item SS-HOPM \cite{Kolda-Mayo:2011} is guaranteed to compute the Z-eigenpairs of a \textit{real symmetric tensor} $\mathcal{A}$, which is closely related to optimal rank-1 approximation of $\mathcal{A}$. In addition, if $\mathcal{A} \in \mathbb{R}_{\geqslant 0}^{[m,n]}$
is nonnegative symmetric, then SS-HOPM is guaranteed to find a
nonnegative Z-eigenpair of $\mathcal{A}$. The convergence of SS-HOPM
appears to be linear.

\item Continuation method is guaranteed to find a nonnegative Z-eigenpair of $\mathcal{A} \in \mathbb{R}_{\geqslant 0}^{[m,n]}$ if $F_{\rm Z}(\mathbf{x},\lambda)=\mathbf{0}$ has only
isolated solution in $\mathbb{R}^{n+1}_{\geqslant 0}$ (see Theorem \ref%
{thm2.8}).
\end{itemize}


\item For computing H-eigenpair of a tensor $\mathcal{A}\in \mathbb{R}%
_{\geqslant 0}^{[m,n]}$:

\begin{itemize}
\item NQZ \cite{Ng-Qi-Zhou:2009,Zhang-Qi:2012} is a power method for
computing the largest H-eigenvalue of $\mathcal{A}$. The convergence of
NQZ appears to be linear for weakly primitive tensors.

\item NNI \cite{Liu-Guo-Lin:2016,Liu-Guo-Lin:2017} is guaranteed to compute
the largest H-eigenvalue of a weakly irreducible nonnegative tensor. The
convergence rate is quadratic when it is near convergence. However, the
initial monotone convergence of NNI may be quite slow.

\item Continuation method is guaranteed to compute the largest H%
-eigenvalue of $\mathcal{A}$ if all solutions of $F_{\rm H}(\mathbf{x},\lambda)=%
\mathbf{0}$ in $\mathbb{R}^{n+1}_{\geqslant 0}$ are isolated.
\end{itemize}

\end{itemize}}

Note that if $\mathcal{A}$ is weakly primitive then $\mathcal{A}$ is weakly
irreducible and if $\mathcal{A}$ is weakly irreducible then the solution of $%
F_{\rm H}(\mathbf{x},\lambda)=\mathbf{0}$ in $\mathbb{R}^{n+1}_{\geqslant 0}$ is
unique and isolated.

\section{Numerical experiments}

In this section, we present some numerical results to support our theory.
All numerical tests were performed using MATLAB 2014a on a Mac Pro with 3.7
GHz Quad-Core Intel Xeon E5 and 32 GB memory. In the following numerical
results, ``Steps" denotes the number of steps (a step $=$ a prediction step $%
+$ a correction step) of continuation method to achieve the solution,
``\#(Eval)" denotes the number of evaluations of $\mathcal{A}\mathbf{x}^{m-1}
$, ``Res" denotes the residual, $\|F_{\rm Z}(\mathbf{x}_*, \lambda_*)\|$ (or $%
\|F_{\rm H}(\mathbf{x}_*, \lambda_*)\|$), when the Z-eigenpair (or H-eigenpair), $(\lambda_*,\mathbf{x}_*)$, is computed and \#(TP) denotes the number of turning points of the solution curve.
The maximum number of evaluations allowed is 2000 for NQZ, SS-HOPM and NNI.

\subsection{Numerical results for computing Z-eigenpairs}

We first apply continuation method to compute Z-eigenpairs of the $m$%
th-order $n$-dimensional signless Laplacian tensor \cite{HQ15,HQX15}.

\begin{Example}
\label{ex5.1} Consider the signless Laplacian tensor $\mathcal{A} = \mathcal{%
D} + \mathcal{C}\in \mathbb{R}_{\geqslant 0}^{[m,n]}$ of an $m$-uniform
connected hypergraph \cite{HQ15,HQX15}, where $\mathcal{D}$ is the diagonal
tensor with diagonal element $\mathcal{D}_{i,\cdots,i}$ equal to the degree
of vertex $i$ for each $i$, and $\mathcal{C}$ is the adjacency tensor
defined in \cite{HQ15,HQX15,Cooper-Dutle:2012} which is symmetric. Consider the edge set $E=\left \{ \{i-m+2,
i-m+3,\ldots,i, i+1\}, i=m-1, \ldots,n\right\}$ in \cite{Liu-Guo-Lin:2017}, where $n+1$ is identified
with $1$. The corresponding tensor $\mathcal{A} $ is weakly primitive (and
thus weakly irreducible).


Given a signless Laplacian tensor $\mathcal{A}\in \mathbb{R}_{\geqslant
0}^{[m,n]}$, let $\mathcal{A}_0=\mathbf{x}%
_1\circ\cdots\circ\mathbf{x}_1$, where $\mathbf{x}_1\in \mathbb{R}^n_{>0}$
is generic with $\|\mathbf{x}_1\|\in[0.9,1.1]$. It follows from \eqref{eqlamxz} that  $(\lambda_0,
\mathbf{x}_0)=(\|\mathbf{x}_1\|^m,\frac{\mathbf{x}_1}{\|\mathbf{x}_1\|})$ is the unique positive Z-eigenpair of $\mathcal{A}_0$.
Table \ref{tbl:outer2} reports the results obtained by tracking the solution
curve of $H_{\rm Z}(\mathbf{x},\lambda,t)=\mathbf{0}$ by pseudo-arclength
continuation method for various of $m$ and $n$. From Table \ref{tbl:outer2},
we can see that the solution curve of $H_{\rm Z}(\mathbf{x},\lambda,t)=\mathbf{0}$
with initial $(\mathbf{x}_0,\lambda_0,0)$ has two turning points for each
test case. The numbers of Steps and \#(Eval) increase when the distance
between those two turning points increases. In this example, the number of
evaluations, \#(Eval), is at most 176. Figure \ref%
{fig2} shows the bifurcation diagram of the solution curve of $H_{\rm Z}(\mathbf{x}%
,\lambda,t)=\mathbf{0}$ for the case $m=5$ and $n=20$. The corresponding
eigenvectors, $\mathbf{x}(s)$, are attached near to the solution curve.
\begin{table}[htbp]
\scriptsize
\caption{Numerical results for Example \protect\ref{ex5.1}.}
\label{tbl:outer2}\centering
\begin{tabular}{rrlrcccrr}
\hline
\multicolumn{2}{c}{Tensor $\mathcal{A}$} & \multicolumn{7}{c}{Continuation
method} \\ \cline{1-2}\cline{4-9}
$m$ & $n $ &  & Steps & \#(Eval) & Res & \#(TP) & \multicolumn{2}{c}{turning
points ($t$)} \\ \hline
$3$ & $20$ &  & $12$ & $67$ & 7.70e-11 & 2 & $0.405$ & 0.385 \\
$3$ & $50$ &  & $18$ & $94$ & 2.68e-12 & 2 & $0.391$ & 0.309 \\
$3$ & $100$ &  & $20$ & $106$ & 2.26e-20 & 2 & $0.444$ & 0.210 \\ \hline
$4$ & $20$ &  & $18$ & $91$ & 1.64e-16 & 2 & $0.276$ & 0.177 \\
$4$ & $50$ &  & $28$ & $128$ & 3.42e-11 & 2 & $0.372$ & 0.0577 \\
$4$ & $100$ &  & $34$ & $160$ & 4.52e-11 & 2 & $0.642$ & 0.0666 \\ \hline
$5$ & $20$ &  & $22$ & $103$ & 4.90e-18 & 2 & $0.244$ & 0.0781 \\
$5$ & $50$ &  & $32$ & $148$ & 1.48e-12 & 2 & $0.662$ & 0.0276 \\
$5$ & $100$ &  & $40$ & $176$ & 1.47e-13 & 2 & $0.846$ & 0.0105 \\ \hline
\end{tabular}%
\end{table}
\begin{figure}[htb]
\centering \resizebox{4in}{!}{\includegraphics{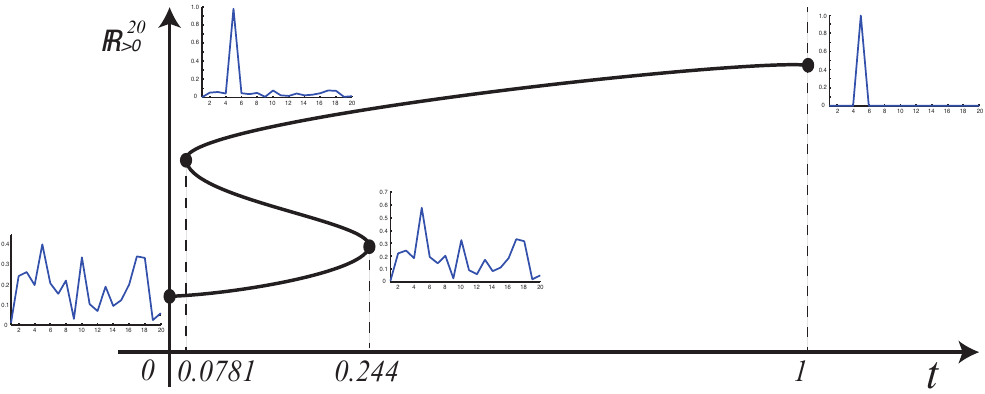}}
\caption{The bifurcation diagram of the solution curve of $H_{\rm Z}(\mathbf{x},%
\protect\lambda,t)=\mathbf{0}$ with $\mathcal{A}\in \mathbb{R}_{\geqslant
0}^{[5,20]}$. The corresponding eigenvectors are attached near to the
solution curve (Example \protect\ref{ex5.1}).}
\label{fig2}
\end{figure}

Corollary \ref{cor2.11} shows that the number of positive Z-eigenpairs of
an irreducible tensor $\widehat{\mathcal{A}}$, counting multiplicities, is
odd. Since the tensor $\mathcal{A}$ constructed in this example is weakly
irreducible, we set $\widehat{\mathcal{A}}=\mathcal{A}+10^{-5}\mathcal{E}$,
where $\mathcal{E}$ is the tensor with all entries equal to 1. Employ the
algorithm shown in Figure \ref{fig1} to the irreducible tensor $\widehat{%
\mathcal{A}}$. In the following numerical tests, we consider the case $m=4$
and $n=20$. For a fixed tensor $\widehat{\mathcal{A}}\in \mathbb{R}%
_{>0}^{[4,20]}$, we run 100 trials of the algorithm using $k$ random initial
vectors $\mathbf{x}_1,\ldots, \mathbf{x}_k\in \mathbb{R}^{20}_{>0}$.
Figure \ref{fig4}  reports the number of occurrences (over 100 trials)  for the  numbers of computed positive Z-eigenpairs of $\widehat{\mathcal{A}}$ in terms of $k=50$ and $70$.

\begin{figure}[htb]
\centering \resizebox{4in}{!}{\includegraphics{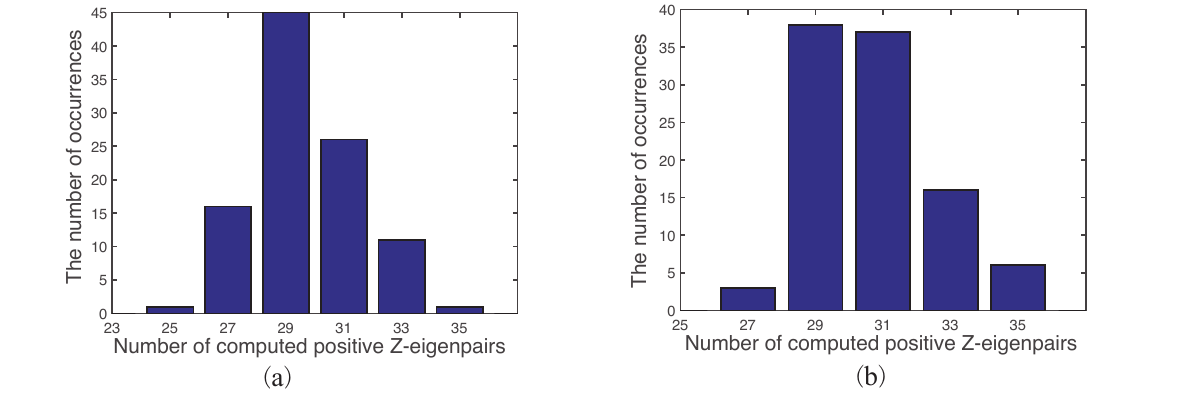}}
\caption{The number of occurrences (over 100 trials)  for the  numbers of computed positive Z-eigenpairs of $\widehat{\mathcal{A}}\in \mathbb{R}%
_{>0}^{[4,20]}$ by using $k=50$ (a) and $k=70$ (b) random vectors (Example \protect\ref{ex5.1}).}
\label{fig4}
\end{figure}

\end{Example}



\begin{Example}
\label{ex5.2} Consider the symmetric tensor $\mathcal{A}(w) = \mathcal{D} + w\mathcal{C}%
\in \mathbb{R}_{\geqslant 0}^{[4,20]}$, where $w\in \mathbb{R}_{>0}$, $%
\mathcal{D}$ and $\mathcal{C}$ are defined in Example \ref{ex5.1}.
We employ continuation method and SS-HOPM with shift parameter $\alpha\in
\mathbb{R}$ to compute positive Z-eigenpair of $\mathcal{A}$. Suppose that
$(\lambda_*,\mathbf{x}_*)\in \mathbb{R}^{21}_{\geqslant 0}$ is a Z%
-eigenpair of $\mathcal{A}(w)$, then $\lambda_*=\lambda_*\mathbf{x}_*^{\top}%
\mathbf{x}_*=\mathbf{x}_*^{\top}\mathcal{A}(w)\mathbf{x}_*^3=:\mathcal{A}(w)%
\mathbf{x}_*^4$. The algorithm SS-HOPM \cite{Kolda-Mayo:2011} is guaranteed
to converge to a local maximum of the optimization problem:
\begin{align}  \label{eq5.1}
\max_{\mathbf{x}\in \mathbb{R}^{20},\ \|\mathbf{x}\|=1}\mathcal{A}(w)\mathbf{%
x}^4
\end{align}
if the shift $\alpha>\beta(\mathcal{A}(w) )$, where the constant $\beta(%
\mathcal{A}(w) )$ is dependent on tensor $\mathcal{A}(w)$. Lemma 4.1 in \cite%
{Kolda-Mayo:2011} shows that $\gamma(w)=72(1+w)$ is an upper bound of $\beta(%
\mathcal{A}(w) )$. Choosing $\alpha >\gamma(w)$ is guaranteed to work but
may slow down convergence. In our numerical experiments, we choose $w=1,3$
and $5$. Note that when $w=1,3$ and $5$ then $\gamma(w)=144, 288$ and $432$,
respectively. Table \ref{tbl:outer5} reports the results obtained by
continuation method and SS-HOPM with $\alpha=\gamma(w)+1$ and $\alpha=1$ in
terms of $w=1,3$ and $5$, where we terminate the iteration of SS-HOPM when $%
\mathrm{Res}<10^{-10}$. In this table, we can see that a local
maximum value, $\lambda_*=\mathcal{A}(w)\mathbf{x}_*^4$, of the optimization
problem \eqref{eq5.1} can also be computed by continuation method. The
number of evaluations, \#(Eval), of continuation method is at most 114 that
is much less than the number of evaluations of SS-HOPM with $%
\alpha=\gamma(w)+1$. SS-HOPM with $\alpha=1$ works for this example, but
there is no theory to guarantee the convergence.

\begin{table}[htbp]
\scriptsize
\caption{Numerical results for Example \protect\ref{ex5.2}.}
\label{tbl:outer5}\centering
\begin{tabular}{r|rrrrrrrr||rrr}
\hline
& \multicolumn{3}{c}{Continuation method} &  & \multicolumn{3}{c}{SS-HOPM$%
(\alpha=\gamma(w)+1)$} &  & \multicolumn{3}{c}{SS-HOPM$(\alpha=1)$}   \\
\cline{2-4}\cline{6-8}\cline{10-12}
$w$ & \#(Eval) & $\lambda_*$ & Res &  & \#(Eval) & $\lambda_*$ & Res &  &
\#(Eval) & $\lambda_*$ & Res   \\ \hline
1 & $91$ & 4 & 1.64e-16 &  & 1211 & 4 & 9.78e-11 &  & 24 & 4 & 8.03e-11   \\
3 & $114$ & 4 & 5.56e-19 &  & 2000 & 4 & 6.61e-07 &  & 33 & 4 & 9.36e-11  \\
5 & 112 & 2.95 & 5.73e-17 &  & 2000 & 2.95 & 2.98e-04 &  & 2000 & 2.95 & 4.27e-05    \\ \hline
\end{tabular}%
\end{table}
\end{Example}

The next example, we consider a \textit{multilinear PageRank} problem
provided in \cite{Gleich-Lim-Yu:2015}. In multilinear PageRank problem, it
needs to compute the positive Z-eigenpair of a stochastic transition
tensor,
\begin{align}  \label{eq5.2}
\mathcal{A}(\alpha)=\alpha\mathcal{P}+(1-\alpha)\mathbf{v}\circ\mathbf{e}%
\circ\cdots \circ\mathbf{e}\in\mathbb{R}_{\geqslant 0}^{[m,n]},
\end{align}
where $\mathcal{P}$ is the transition tensor of the higher-order Markov
chain, $\mathbf{v}\in \mathbb{R}_{\geqslant 0}^n$ is a stochastic vector, $%
\mathbf{e}=[1,1,\cdots,1]^{\top}\in \mathbb{R}^n$ and $\alpha\in (0,1)$.

\begin{Example}
\label{ex5.2.5} We consider stochastic transition $\mathcal{A}(\alpha)\in%
\mathbb{R}_{\geqslant 0}^{[3,6]}$ has the form in \eqref{eq5.2}, where the
unfolding of tensor $\mathcal{P}$ is {\scriptsize
\begin{align*}
&\left[%
\begin{array}{c|c|c|c|c|c}
{\mathcal{P}(:,:,1)} & {\mathcal{P}(:,:,2)} & {\mathcal{P}(:,:,3)} & {%
\mathcal{P}(:,:,4)} & {\mathcal{P}(:,:,5)} & {\mathcal{P}(:,:,6)}%
\end{array}%
\right] \\
&= \left[%
\begin{array}{@{\hspace{0.1cm}}c@{\hspace{0.1cm}}c@{\hspace{0.1cm}}c@{\hspace{0.1cm}}c@{\hspace{0.1cm}}c@{\hspace{0.1cm}}c|c@{\hspace{0.1cm}}c@{\hspace{0.1cm}}c@{\hspace{0.1cm}}c@{\hspace{0.1cm}}c@{\hspace{0.1cm}}c|c@{\hspace{0.1cm}}c@{\hspace{0.1cm}}c@{\hspace{0.1cm}}c@{\hspace{0.1cm}}c@{\hspace{0.1cm}}c|c@{\hspace{0.1cm}}c@{\hspace{0.1cm}}c@{\hspace{0.1cm}}c@{\hspace{0.1cm}}c@{\hspace{0.1cm}}c|c@{\hspace{0.1cm}}c@{\hspace{0.1cm}}c@{\hspace{0.1cm}}c@{\hspace{0.1cm}}c@{\hspace{0.1cm}}c|c@{\hspace{0.1cm}}c@{\hspace{0.1cm}}c@{\hspace{0.1cm}}c@{\hspace{0.1cm}}c@{\hspace{0.1cm}}c}
0 & 0 & 0 & 0 & 0 & 0 & 0 & 0 & 0 & 0 & 0 & 1 & 1 & 0 & 0 & 0 & 0 & 0 & 0 & 0
& 1 & 0 & 1 & 0 & 0 & 1 & 1 & 0 & 0 & 0 & 0 & 0 & 0 & 0 & 0 & 1 \\
0 & 0 & 0 & 0 & 0 & 0 & 0 & 0 & 1 & 0 & 0 & 0 & 0 & 1 & 0 & 0 & 0 & 0 & 0 & 0
& 0 & 0 & 0 & 0 & 0 & 1 & 0 & 0 & 1 & 0 & 0 & 0 & 0 & 0 & 0 & 0 \\
0 & 0 & 0 & 0 & 0 & 0 & 0 & 0 & 0 & 0 & 0 & 0 & 1 & 0 & 0 & 0 & 0 & 0 & 0 & 0
& 0 & 0 & 0 & 0 & 1 & 0 & 0 & 0 & 1 & 0 & 1 & 1 & 0 & 0 & 1 & 0 \\
0 & 0 & 0 & 0 & 0 & 0 & 0 & 0 & 0 & 0 & 0 & 0 & 1 & 0 & 0 & 0 & 0 & 0 & 0 & 0
& 0 & 1 & 0 & 1 & 0 & 0 & 0 & 0 & 0 & 0 & 0 & 1 & 0 & 1 & 0 & 0 \\
0 & 0 & 0 & 0 & 0 & 0 & 0 & 1 & 0 & 0 & 0 & 1 & 0 & 0 & 0 & 0 & 0 & 0 & 0 & 1
& 0 & 0 & 0 & 0 & 0 & 0 & 0 & 1 & 0 & 0 & 0 & 0 & 1 & 0 & 1 & 0 \\
1 & 1 & 1 & 1 & 1 & 1 & 1 & 0 & 0 & 1 & 1 & 0 & 1 & 0 & 1 & 1 & 1 & 1 & 1 & 0
& 0 & 0 & 0 & 0 & 0 & 0 & 0 & 0 & 0 & 1 & 0 & 0 & 0 & 0 & 0 & 0%
\end{array}%
\right],
\end{align*}}
and the stochastic vector $\mathbf{v}=\mathbf{e}/6$. SS-HOPM and  Newton method fail to converge the nonnegative Z-eigenpair when $\alpha=0.99$ (see \cite{Gleich-Lim-Yu:2015}).
We employ
continuation method to compute the positive Z-eigenpair of $\mathcal{A}(\alpha)$ in
terms of $\alpha=0.9$, $0.99$ and $0.999$.  Table \ref{tbl:outer6} reports
the numerical results. This table shows that when $\alpha=0.99$ and $0.999$,
the solution curves have two turning points, but no turning point occur when
$\alpha=0.9$.
\begin{table}[htbp]
\scriptsize
\caption{Continuation method for positive Z-eigenpair of $\mathcal{A}(%
\protect\alpha)$ (Example \protect\ref{ex5.2.5}).}
\label{tbl:outer6}\centering
\begin{tabular}{r|rccccc}
\hline
$\alpha $ & Steps & \#(Eval) & Res & \#(TP) & \multicolumn{2}{c}{turning
points ($t$)} \\ \hline\hline
$0.9$ & $4$ & $29$ & 3.55e-16 & 0 & - & - \\
$0.99$ & $13$ & $74$ & 1.13e-16 & 2 & $0.999$ & 0.952 \\
$0.999$ & $16$ & $90$ & 1.11e-16 & 2 & $0.984$ & 0.849 \\ \hline
\end{tabular}%
\end{table}
\end{Example}

In the following example, we consider a small size irreducible tensor $%
\mathcal{A}\in \mathbb{R}_{\geqslant 0}^{[4,2]}$, which has three positive $Z
$-eigenpairs. This tensor is provided in \cite{Chang-Pearson-Zhang:2013}.

\begin{Example}
\label{ex5.3} Let $\mathcal{A}\in \mathbb{R}_{\geqslant 0}^{[4,2]}$ be
defined by
\begin{align*}
\mathcal{A}_{1111}&=\mathcal{A}_{2222}=\frac{4}{\sqrt{3}},\ \ \mathcal{A}%
_{1112}=\mathcal{A}_{1211}=\mathcal{A}_{2111}=1, \\
\mathcal{A}_{1222}&=\mathcal{A}_{2122}=\mathcal{A}_{2212}=\mathcal{A}%
_{2221}=1, \text{ and }\mathcal{A}_{ijkl}=0 \text{ elsewhere}.
\end{align*}
Obviously, $\mathcal{A}$ is irreducible.
The system of polynomials $F_{Z}$ in \eqref{eqZ} has the form
\begin{align*}
F_{\rm Z}(\mathbf{x},\lambda)=\left(%
\begin{array}{l}
\frac{4}{\sqrt{3}}x_1^3+3x_1^2x_2+x_2^3-\lambda x_1 \\
\frac{4}{\sqrt{3}}x_2^3+3x_1x_2^2+x_1^3-\lambda x_2 \\
x_1^2+x_2^2-1%
\end{array}%
\right)=\mathbf{0},
\end{align*}
where $\mathbf{x}=(x_1,x_2)^{\top}$.  \cite{Chang-Pearson-Zhang:2013} shown
that $\mathcal{A}$ has there positive Z-eigenpairs:
{\leftmargini=8mm
\begin{itemize}
\item $\hat{\lambda}_0=2+\frac{2}{\sqrt{3}}\approx 3.1547$ with corresponding
positive Z-eigenvector $\hat{\mathbf{x}}_0=\left[\frac{\sqrt{2}}{2},\frac{\sqrt{2%
}}{2}\right]^{\top}$;

\item $\hat{\lambda}_1=\hat{\lambda}_2=\frac{11}{2\sqrt{3}}\approx 3.1754$ with
corresponding positive Z-eigenvectors $\hat{\mathbf{x}}_1=\left[\frac{\sqrt{3}}{2%
},\frac{1}{2}\right]^{\top}$ and $\hat{\mathbf{x}}_2=\left[\frac{1}{2},\frac{\sqrt{%
3}}{2}\right]^{\top}$.
\end{itemize}}
That is, $F_{Z}(\hat{\mathbf{x}}_0,\hat{\lambda}_0)=F_{Z}(\hat{\mathbf{x}}_1,\hat{\lambda}_1)=F_{Z}(%
\hat{\mathbf{x}}_2,\hat{\lambda}_2)=\mathbf{0}$. The Jacobian matrix of $F_{\rm Z}$ is
\begin{align*}
\mathscr{D}_{\mathbf{x},\lambda}F_{\rm Z}(\mathbf{x},\lambda)=\left[%
\begin{array}{ccc}
4\sqrt{3}x_1^2+6x_1x_2-\lambda & 3x_1^2+3x_2^2 & -x_1 \\
3x_1^2+3x_2^2 & 4\sqrt{3}x_2^2+6x_1x_2-\lambda & -x_2 \\
2x_1 & 2x_2 & 0%
\end{array}%
\right].
\end{align*}
Then we have
\begin{align*}
&\mathrm{Sgn}(\mathrm{det }(\mathscr{D}_{\mathbf{x},\lambda}F_{\rm Z}(\hat{\mathbf{x}}%
_0,\hat{\lambda}_0)))=1, \text{ and } \\
&\mathrm{Sgn}(\mathrm{det }(\mathscr{D}_{\mathbf{x},\lambda}F_{\rm Z}(\hat{\mathbf{x}}%
_1,\hat{\lambda}_1)))=\mathrm{Sgn}(\mathrm{det }(\mathscr{D}_{\mathbf{x}%
,\lambda}F_{\rm Z}(\hat{\mathbf{x}}_2,\hat{\lambda}_2)))=-1,
\end{align*}
and hence, $\mathrm{deg}(F_{\rm Z}, \mathbb{R}^{3}_{> 0},\mathbf{0})\equiv
\sum_{k=0}^2\mathrm{Sgn}(\mathrm{det }(\mathscr{D}_{\mathbf{x},\lambda}F_{\rm Z}(%
\hat{\mathbf{x}}_k,\hat{\lambda}_k)))=-1$. This result has been shown in Theorem \ref%
{thm2.10} with $n=2$. For any rank-1 symmetric tensor $\mathcal{A}_0\in
\mathbb{R}_{>0}^{[4,2]}$, $\mathrm{deg}(F^0_{\rm Z}, \mathbb{R}^{3}_{> 0},\mathbf{0}%
)=-1$ (see Lemma \ref{lem3.9}), where $F^0_{\rm Z}$ is defined in \eqref{eqZ}. From
Theorem \ref{thm2.11} $(i)$, we can only compute Z-eigenpairs, $(\hat{\lambda}_1,%
\hat{\mathbf{x}}_1)$ or $(\hat{\lambda}_2,\hat{\mathbf{x}}_2)$, by tracking the solution curve
of $H_{\rm Z}(\mathbf{x},\lambda,t )=\mathbf{0}$. Let $\mathcal{A}%
_{0,1},\ \mathcal{A}_{0,2}\in \mathbb{R}_{>0}^{[4,2]}$ be rank-1 symmetric
tensors and two homotopy equations $H_{\rm Z,1}(\mathbf{x},\lambda,t )=\mathbf{0}$
and $H_{\rm Z,2}(\mathbf{x},\lambda,t )=\mathbf{0}$ be constructed in \eqref{eq2.7}. Suppose that the Z-eigenpairs, $(\hat{\lambda}_1,\hat{\mathbf{x}}_1)$ and $%
(\hat{\lambda}_2,\hat{\mathbf{x}}_2)$, can be computed by tracking the solution curves
of $H_{\rm Z,1}(\mathbf{x},\lambda,t )=\mathbf{0}$ and $H_{\rm Z,2}(\mathbf{x},\lambda,t
)=\mathbf{0}$, respectively. Theorem \ref{thm2.11} $(iv)$ shows that a new
positive Z-eigenpair $(\lambda_*,\mathbf{x}_*)\in \mathbb{R}_{> 0}^3$ can
be computed by tracking the solution curve of $H_{\rm Z,1}(\mathbf{x},\lambda,t )=\mathbf{0}$ with initial $(\hat{\mathbf{x}}_2,\hat{\lambda}_2,1)$ and $\mathrm{Sgn}(%
\mathrm{det }(\mathscr{D}_{\mathbf{x},\lambda}F_{\rm Z}(\mathbf{x}_*,\lambda_*)))=1
$. Hence, $(\lambda_*,\mathbf{x}_*)=(\hat{\lambda}_0,\hat{\mathbf{x}}_0)$. We run 100
trials of the algorithm using $k$ random initial vectors $\mathbf{x}%
_1,\ldots, \mathbf{x}_k\in \mathbb{R}^{2}_{>0}$. Table \ref{tbl:outer4}
reports the number of occurrences (over 100 trials) for the  numbers of computed Z-eigenpairs of $\mathcal{A}$  in terms of $k=2$, $5$ and $8$.
\begin{table}[htbp]
\scriptsize
\caption{The number of occurrences (over 100 trials) for the numbers of computed positive Z-eigenpairs of $\mathcal{A}\in
\mathbb{R}_{>0}^{[4,2]}$ by using $k=2$, $5$ and $8$ random vectors
(Example \protect\ref{ex5.3}).}
\label{tbl:outer4}\centering
\begin{tabular}{c|cc}
\hline
& \multicolumn{2}{c}{No. of computed Z-eigenpairs} \\ \cline{2-3}
$k$ & $1 $ & $3 $ \\ \hline
2 & $49$ & $51$ \\
5 & $6$ & $94$ \\
8 & $0$ & $100$ \\ \hline
\end{tabular}%
\end{table}
\end{Example}
\subsection{Numerical results for computing H-eigenpair}

In this section, we then apply continuation method, NQZ and NNI to compute
the positive H-eigenpair of the $m$th-order $n$-dimensional signless
Laplacian tensor \cite{HQ15,HQX15}.

\begin{Example}
\label{ex5.4} Consider a tensor $\mathcal{A}= \mathcal{D} + \mathcal{C}\in
\mathbb{R}_{\geqslant 0}^{[m,n]}$, where $\mathcal{D}$ and $\mathcal{C}$ are
defined in Example \ref{ex5.1}.
Let  $\mathcal{A}_0=\mathbf{x}%
_1\circ\cdots\circ\mathbf{x}_1$, where $\mathbf{x}_1=\frac{1}{n^{(m-1)/m}}%
[1,\ldots,1]^{\top}\in \mathbb{R}_{>0}^n$. From \eqref{eqlamx}, we obtain
the unique nonzero H-eigenvalue of $\mathcal{A}_0$ is $\lambda_0=%
\prod_{k=2}^{m}(\mathbf{x}_1^{\top}\mathbf{x}_1^{[1/(m-1)]})=\prod_{k=2}^{m}(%
\frac{1}{n^{(m-1)/m}}\cdot \frac{1}{n^{1/m}}\cdot n)=1$ and the associated
unit positive H-eigenvector is $\mathbf{x}_0=\frac{1}{\sqrt{n}}%
[1,\ldots,1]^{\top}$. Table \ref{tbl:outer1} reports the results obtained by
continuation method, NQZ and NNI for
various of $m$ and $n$, where we terminate the iteration of NQZ and NNI when
$\mathrm{Res}<10^{-10}$.

\begin{table}[htbp]
\scriptsize
\caption{Numerical results for Example \protect\ref{ex5.4}.}
\label{tbl:outer1}\centering
\begin{tabular}{rrlrcccrccrc}
\hline
\multicolumn{2}{c}{Tensor $\mathcal{A}$} & \multicolumn{4}{c}{Continuation
method} & \multicolumn{3}{c}{NQZ} & \multicolumn{3}{c}{NNI} \\
\cline{1-2}\cline{4-6}\cline{8-9}\cline{11-12}
$m$ & $n $ &  & Steps & \#(Eval) & Res &  & \#(Eval) & Res &  & \#(Eval) &
Res \\ \hline
$3$ & $20$ &  & $4$ & $18$ & 1.90e-11 &  & $240$ & 9.33e-11 &  & $7$ &
4.50e-16 \\
$3$ & $50$ &  & $7$ & $36$ & 2.88e-12 &  & $1285$ & 9.88e-11 &  & $11$ &
7.43e-16 \\
$3$ & $100$ &  & $11$ & $56$ & 3.42e-12 &  & $2000$ & 3.03e-06 &  & $126$ &
1.24e-12 \\ \hline
$4$ & $20$ &  & $4$ & $18$ & 2.94e-14 &  & $138$ & 9.03e-11 &  & $7$ &
7.28e-11 \\
$4$ & $50$ &  & $6$ & $30$ & 5.25e-11 &  & $767$ & 9.80e-11 &  & $13$ &
2.61e-14 \\
$4$ & $100$ &  & $8$ & $42$ & 6.91e-11 &  & $2000$ & 2.57e-08 &  & $92$ &
1.33e-15 \\ \hline
$5$ & $20$ &  & $4$ & $19$ & 5.99e-12 &  & $91$ & 8.31e-11 &  & $8$ &
3.18e-15 \\
$5$ & $50$ &  & $5$ & $25$ & 1.49e-17 &  & $531$ & 9.64e-11 &  & $9$ &
1.04e-13 \\
$5$ & $100$ &  & $9$ & $45$ & 6.66e-16 &  & $1918$ & 9.89e-11 &  & $80$ &
3.41e-13 \\ \hline
\end{tabular}%
\end{table}
From Table \ref{tbl:outer1},
we see that the numbers of evaluations, \#(Eval), for continuation method are between $18$ to $56$. The
convergence of NQZ \cite{Ng-Qi-Zhou:2009,Zhang-Qi:2012} is linear and the
numbers of evaluations for NQZ are between 91 to 2000.  The convergence rate of NNI \cite%
{Liu-Guo-Lin:2016,Liu-Guo-Lin:2017} is quadratic when
it is near convergence. However, the initial monotone convergence of NNI
with positive parameters $\{\theta_k\}$ may be quite slow. In this example, the numbers of
evaluations for NNI are between $7$ and $126$.

\begin{Remark}
\label{rem5.1} There is no theory to guarantee the convergence of NNI with $%
\theta_k=1$. In this example, if we employ NNI with $\theta_k=1$ to compute
the positive H-eigenpair, it has very nice performance. The number of evaluations
for NNI with $\theta_k=1$ is at most $11$.
\end{Remark}
\end{Example}
\section{Conclusions}

We have presented homotopy continuation method for computing nonnegative Z-/H-eigenpairs  of a nonnegative tensor $\mathcal{A}$. A linear homotopy $H(\mathbf{x},\lambda,t)=\mathbf{0}$ is constructed by a target nonnegative tensor $\mathcal{A}$ and a rank-1 initial tensor $\mathcal{A}_0=\mathbf{x}_1\circ\cdots\circ\mathbf{x}_1$, where $\mathbf{x}_1\in \mathbb{R}_{>0}^n$ is generic. It is shown that $H(\mathbf{x},\lambda,t)=\mathbf{0}$ has only one  positive solution at $t=0$ and the solution curve of the linear homotopy starting from the positive solution, $(\mathbf{x}(s),\lambda(s),t(s))\in \mathbb{R}_{>0}^{n+1}\times [0,1)$ for $s\in [0,s_{max})$, is smooth and  $t(s)\rightarrow 1^-$ as $s\rightarrow s_{max}^-$. Hence, the nonnegative eigenpair can be computed by tracking the solution curve if the nonnegative solutions of $H(\mathbf{x},\lambda,1)=\mathbf{0}$ are isolated. Furthermore, we have shown that the number of positive Z-eigenpairs of an irreducible nonnegative tenor is odd and proposed an algorithm to compute odd number of  positive Z-eigenpairs.  For computing nonnegative eigenpairs,  the norm of the generic positive vector $\mathbf{x}_1$ will affect the distance of two turning points and then, affect the time of computing. How to choose a suitable norm of the  generic positive vector $\mathbf{x}_1$ remains an open problem.


\end{document}